\newcommand{\2}{^{[2]}}
\newcommand{\abs}[1]{\vert{#1}\vert}
\newcommand{\e}[1]{\overline{#1}}
\newcommand{\F}{\mathbb{F}}
\newcommand{\gen}[1]{\langle{#1}\rangle}
\newcommand{\mc}[1]{\mathscr{#1}}
\newcommand{\mf}[1]{\mathfrak{#1}}
\newcommand{\li}{\mathscr{L}}
\newcommand{\pow}{^{[2]^n}}
\newcommand{\pres}[2]{\left\langle{#1}\, \big\vert\, {#2}\right\rangle}
\newcommand{\set}[2]{\left\lbrace{#1}\ \big\vert\ {#2}\right\rbrace}
\newcommand{\Z}{\mathbb{Z}}
\DeclareMathOperator{\ad}{ad}
\DeclareMathOperator{\cd}{cd}
\DeclareMathOperator{\ext}{Ext}
\DeclareMathOperator{\gr}{gr}
\DeclareMathOperator{\hnn}{HNN}
\DeclareMathOperator{\invlim}{\varprojlim}
\DeclareMathOperator{\spn}{Span}
\DeclareMathOperator{\tor}{Tor}
\title{Restricted graph Lie algebras in characteristic two}
\authors{Simone Blumer}
\abstract{
    We investigate restricted Lie algebras arising as analogues of (twisted) right-angled Artin groups and right-angled Coxeter groups over fields of characteristic two. These algebras are defined via quadratic relations determined by decorated graphs. We compute their cohomology rings with trivial coefficients and uncover phenomena specific to characteristic two. Unlike in zero or odd characteristics, where quadratically defined ordinary and restricted Lie algebras have equivalent cohomology theories, the characteristic two case exhibits dependence on the base field. In particular, we prove that the ground field being the prime field $\F_2$ characterizes when a Lie-theoretic analogue of the twisted Droms theorem holds.
    }
\keywords{Restricted Lie algebras, Cohomology rings, Koszul algebras}
\begin{document}
	{
	
	\newtheorem{thm}{Theorem}[section]
	\newtheorem*{thmA}{Main Theorem}
	\newtheorem*{thmB}{Theorem B}
	\newtheorem*{thm*}{Theorem}
	\newtheorem{cor}[thm]{Corollary}
	\newtheorem{lem}[thm]{Lemma}
	\newtheorem{prop}[thm]{Proposition}
	\newtheorem{defin}[thm]{Definition}
	\theoremstyle{definition}
	\newtheorem{exam}[thm]{Example}
	\theoremstyle{definition}
	\newtheorem{examples}[thm]{Examples}
	\newtheorem{rem}[thm]{Remark}
	\newtheorem{case}{\sl Case}
	
	\newtheorem{claim}{Claim}
	\newtheorem{fact}[thm]{Fact}
	\newtheorem{question}[thm]{Question}
	\newtheorem*{questionss}{Questions}
	\newtheorem{conj}[thm]{Conjecture}
	\newtheorem*{notation}{Notation}
	\swapnumbers
	\newtheorem{rems}[thm]{Remarks}
	
	\theoremstyle{definition}
	\newtheorem*{acknowledgment}{Acknowledgment}

	\numberwithin{equation}{section}
}
\tikzset{
	mid arrow/.style={
		postaction={
			decorate,
			decoration={
				markings,
				mark=at position 0.65 with {\arrow{>}},
			}
		}
	}
}


\section{Introduction}
Restricted Lie algebras, introduced by Jacobson \cite{jacob}, provide a natural framework for studying Lie-theoretic structures in positive characteristic. Their additional $p$-power operation $x \mapsto x^{[p]}$ mimics the action of the Frobenius map on associative algebras of characteristic $p,$ and endows them with richer algebraic properties than ordinary Lie algebras.

Restricted Lie algebras naturally arise as graded objects associated with distinguished filtrations of pro-$p$ groups (see, e.g., \cite{analyticprop}). 
They also play a crucial role in the study of algebraic groups (see \cite{demazure}).

On the other hand, quadratic algebras, namely those defined by relations that are quadratic in the generators, form an important class of algebras which serve as a natural first approximation to the study of more general graded algebras. Among these, Koszul (restricted) Lie algebras are determined by their cohomology ring. They have been studied by Weigel \cite{weig} (see also \cite{sb_kosz}). The primary motivation for this paper stems from current research in Galois theory. 
Specifically, the Norm Residue Isomorphism Theorem by Rost and Voevodsky \cite{voe} (formerly known as the Bloch--Kato conjecture; see Weibel's survey \cite{weibelBK}) implies that the $\F_p$-cohomology rings $H^\bullet(G_\F(p),\F_p)$ of maximal pro-$p$ quotients of absolute Galois groups $G_\F$ are quadratic algebras.
If $k$ is a field of positive characteristic $p,$ and $\li$ is an ordinary Lie algebra with a given presentation over $k,$ then there is an associated restricted Lie algebra $\mf g$ defined by the same presentation, or, equivalently, by taking the primitive elements of the universal enveloping algebra $U(\li)$ (cf. \cites{sb_kosz,milnorMoore}). The assignment $\li\mapsto\mf g$ gives rise to the restrictification functor (or $p$-hull), which provided that $p$ is odd, defines an isomorphism between two categories. The first consists of Lie algebras whose relations are quadratic in the generators, and the second of restricted Lie algebras with the same property. Moreover, the cohomology theories of these objects are the same. This case has already been studied in the author's paper \cite{sb_kosz}. 

However, in characteristic $2$ the situation fundamentally changes, as the ``squares'' of some generators can appear in the quadratic defining relations of restricted Lie algebras. Consequently, the restrictification functor is reduced to a mere embedding, producing strong consequences both on a cohomological side and on the structure of the subalgebras (see \cite{sb_kosz}).
People working on modular algebraic structures in characteristic $2$ ironically say that ``$2$ is the \textit{oddest} prime number''. Despite these complexities, working in characteristic $2$ also opens up the possibility of considering a richer and broader range of examples. 

The first part of the paper is devoted to the study of homological properties of restricted Lie algebras. For example, we compute the Hilbert series of graded restricted Lie algebras under mild assumptions.
\begin{thm}
	Let $\mf g=\mf g_1\oplus\mf g_2\oplus\dots$ be a graded, restricted Lie algebra with finite-dimensional cohomology and eigenvalues $\lambda_1,\dots,\lambda_n.$ Then, for all $m\geq 1,$ \[\dim\mf g_m=\sum_{i=1}^nM_{2,m}(\lambda_i),\]
	where $M_{2,m}$ is the generalized necklace polynomial mod-$2$ of degree $m.$
\end{thm}

Right-angled Artin groups (RAAGs), also known as partially commutative groups, or graph groups, form a well-studied class of combinatorially defined abstract groups whose properties are encoded within simple graphs. 

A distinguished classifying complex of a RAAG can be constructed by attaching tori of various dimensions (see Salvetti \cite{salvetti}).  By additionally including Klein bottles into this construction, one gets a modified version of RAAGs, called twisted right-angled Artin groups, or T-RAAGs. These groups are characterized by defining relations that are either standard commutators $[x,y]=1,$ or Klein-type relations $yxy^{-1}=x^{-1}$ between the generators. Notably, the Klein-type relation can also be expressed as $[x,y]=xyx^{-1}y^{-1}=x^2,$ which allows us to consider the restricted Lie algebra defined by the analogous presentation. These objects are similarly encoded in a graph which also has some directed edges.

Droms \cite{droms} determined the class of graphs whose associated RAAGs are locally RAAGs. Specifically, he showed that the (finitely generated) subgroups of a RAAG are all RAAGs if and only if the defining graph is a Droms graph, that is, it contains neither a square graph nor paths of length $3$ as induced subgraphs.
Prompted by Droms' work, a recent work of Foniqi, Quadrelli, and the author \cite{sb_droms} extends this result to T-RAAGs.

The present paper aims to study the Lie-algebraic counterparts of these groups. Associated to a graph with some directed edges, we define a restricted Lie algebra, called the twisted right-angled Artin graded (or T-RAAG) Lie algebra, that is generated by the vertices of the graph, subjected to a specific defining relation for each plain/directed edge. Under mild assumptions on the defining graph, the associated T-RAAG is a Koszul restricted Lie algebra.

\begin{thm}
	Let $\Gamma$ be a mixed graph, and $\mf a(\Gamma)$ be the associated T-RAAG restricted Lie algebra over a field $k$ of characteristic two. Then, each of the following implies that $\Gamma$ is a Droms mixed graph:
	\begin{enumerate}
		\item All standard subalgebras of $\mf a(\Gamma)$ are T-RAAGs.
		\item All standard subalgebras of $\mf a(\Gamma)$ are quadratic restricted Lie algebras.
		\item The cohomology ring of $\mf a(\Gamma)$ is universally Koszul.
	\end{enumerate}
	Moreover, these conditions are equivalent to the fact that $\Gamma$ is a mixed Droms graph if and only if $k$ is the prime field $\mathbb F_2.$
\end{thm}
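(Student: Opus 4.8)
The forward implications---that each of (1)--(3) forces $\Gamma$ to be a mixed Droms graph---are the first part of the theorem and hold over an arbitrary field of characteristic two, so only the ``if and only if'' remains. For the direction in which $k=\F_2$, the plan is to show by induction on $|V(\Gamma)|$ that a mixed Droms graph satisfies all of (1), (2) and (3), using the recursive description of such graphs: every Droms graph is obtained from the one-vertex graph by iterated disjoint unions and by adjoining a dominating vertex, and a mixed Droms graph carries only orientations admissible for this recursion. The case $\Gamma=K_1$ is immediate. If $\Gamma=\Gamma_1\sqcup\Gamma_2$ then $\mf a(\Gamma)\cong\mf a(\Gamma_1)\invamalg\mf a(\Gamma_2)$; its standard subalgebras are the coproducts of standard subalgebras of the two factors, hence T-RAAGs by induction; quadraticity passes to coproducts; and the cohomology of a coproduct is the graded fibre product of the cohomology rings, under which universal Koszulity is preserved. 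Thus the essential case is the cone.

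For $\Gamma=v*\Gamma'$ the algebra $\mf a(\Gamma)$ is assembled from $\mf a(\Gamma')$ and the rank-one restricted Lie algebra generated by $v$ through an extension whose non-triviality is controlled by the edges leaving $v$. One must show this extension is as split as possible, so that: (a) the standard subalgebras of $\mf a(\Gamma)$ are exactly the T-RAAG Lie algebras of induced mixed subgraphs; and (b) a Lyndon--Hochschild--Serre-type spectral sequence collapses and exhibits $H^\bu(\mf a(\Gamma),\F_2)$ as a quadratic algebra that is moreover universally Koszul---the latter requiring control of \emph{all} degree-one ideals, which one obtains by identifying the relevant quotients with cohomology rings of smaller T-RAAG Lie algebras. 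It is precisely here that $k=\F_2$ is used: for a directed edge $v\to w$ the relation $[v,w]=v^{[2]}$ is coupled to the $[2]$-operation through $(\lambda v+\mu w)^{[2]}=\lambda^2v^{[2]}+\mu^2w^{[2]}+\lambda\mu\,[v,w]$, and only when $\lambda=\lambda^2$ for every scalar---that is, over $\F_2$---does the $[2]$-power of an arbitrary element of $\mf a(\Gamma')$ stay inside the expected span; over a larger field this coupling forces genuinely cubic relations into subalgebras and makes the extension wild. Assembling the three cases proves the $\F_2$ half.

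For the converse, assume $k\supsetneq\F_2$ and fix $\lambda\in k\setminus\F_2$. The plan is to exhibit a small mixed Droms graph $\Gamma_0$ carrying two directed edges with a common head---say $x_1\to y\leftarrow x_2$, together with whatever edge between $x_1$ and $x_2$ keeps $\Gamma_0$ admissible---and to show, using the identity above together with $\lambda\ne\lambda^2$, that inside a standard subalgebra of $\mf a(\Gamma_0)$ (possibly after a mild enlargement of $\Gamma_0$) a relation genuinely of degree three arises that is not a consequence of quadratic relations, breaking (2), and dually that a degree-one ideal of $H^\bu(\mf a(\Gamma_0),k)$ acquires a non-linear minimal free resolution, breaking (3)---even though $\Gamma_0$ is Droms. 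Checking that $\Gamma_0$ has no induced $P_4$ or $C_4$ and carries admissible orientations, and that none of this occurs when $k=\F_2$, completes the ``only if''.

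The main obstacle is the cone step of the $\F_2$ half: upgrading Koszulity to \emph{universal} Koszulity of $H^\bu(\mf a(v*\Gamma'),\F_2)$ demands a complete inventory of the standard subalgebras of $\mf a(\Gamma)$ and of the quotients of its cohomology by arbitrary degree-one ideals, together with the verification that the defining extension splits compatibly with the $[2]$-operation---and it is exactly this bookkeeping that pins down the hypothesis $k=\F_2$. A subsidiary point, best settled at the outset, is that in characteristic two the restrictification functor is only an embedding, so the identification of standard subalgebras with T-RAAG Lie algebras of induced subgraphs---automatic for odd $p$---requires its own PBW-type argument.
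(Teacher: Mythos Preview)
Your global architecture (induction on $|V|$, disjoint-union and cone steps, the Frobenius-fixed scalars as the crux, and a three-vertex counterexample for $k\supsetneq\F_2$) matches the paper, and you correctly isolate $\lambda=\lambda^2$ as the mechanism. But two of your steps, as written, do not go through.

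\textbf{Cone step.} Your claim (a)---that the standard subalgebras of $\mf a(v\ast\Gamma')$ are ``exactly the T-RAAG Lie algebras of induced mixed subgraphs''---is false and is not what one proves. A standard subalgebra $\mf b$ is generated by arbitrary $\F_2$-linear combinations of the canonical generators, and the graph it is a T-RAAG on is in general \emph{not} an induced subgraph of $\Gamma$. The paper's argument is different and is where $\lambda=\lambda^2$ actually enters: the cone tip $v$ is a positive vertex, so $\mf a(\Gamma)=\langle v\rangle\rtimes\mf a(\Gamma')$ with a retraction $\pi:\mf a(\Gamma)\to\mf a(\Gamma')$. For a standard subalgebra $\mf b$ one applies induction to $\pi(\mf b)\simeq\mf a(\Lambda)$, chooses lifts $x_i=v_i+\alpha_i v$ of its canonical generators $v_i$, and checks that $\sigma(v_i)=v_i+\alpha_i v$ extends to a restricted homomorphism $\pi(\mf b)\to\mf b$. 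The directed-edge relation $[v_i,v_j]=v_i\2$ is sent to $[\sigma(v_i),\sigma(v_j)]=v_i\2+\alpha_i v\2$, which equals $\sigma(v_i)\2=v_i\2+\alpha_i^2 v\2$ precisely because $\alpha_i\in\F_2$. This splits $\mf b\simeq(\mf b\cap\langle v\rangle)\rtimes\pi(\mf b)$ and exhibits $\mf b$ as a T-RAAG on a \emph{new} graph (either $\Lambda$ or its cone). Your LHS-spectral-sequence plan for (3) is also unnecessary: the paper never argues cohomologically in the cone step. Once (1) is established, Corollary~\ref{cor:TRAAGkosz} gives Koszulity of every standard subalgebra, hence $\mf a(\Gamma)$ is BK, and the general equivalence ``BK $\Leftrightarrow$ $H^\bu$ universally Koszul'' (proved in \cite{sb}) yields (3) for free.

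\textbf{Disjoint-union step.} Standard subalgebras of $\mf a(\Gamma_1)\amalg\mf a(\Gamma_2)$ are \emph{not} simply coproducts of standard subalgebras of the factors (take $\Gamma_1,\Gamma_2$ single vertices and the subalgebra $\langle v_1+v_2\rangle$). One needs the graded Kurosh subalgebra theorem: a standard subalgebra decomposes as a free factor coproduct standard subalgebras of the $\mf a(\Gamma_i)$, all of which are T-RAAGs by induction. The paper in fact runs this step through cohomology first (direct sum of UK rings is UK, hence BK, hence Kurosh applies).

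\textbf{Counterexample.} Your $\Gamma_0$ is the right one once you commit to the undirected edge between $x_1$ and $x_2$: the triangle with $\vec{x_1 y},\vec{x_2 y},\overline{x_1x_2}$ is the special cone $\nabla_\theta(e)$ on a single directed edge, and it is a mixed Droms graph. The non-quadratic standard subalgebra is $\langle x_1+\lambda x_2,\, y\rangle$ for $\lambda\notin\F_2$: its quadratic cover is free (since $[y,x_1+\lambda x_2]=x_1\2+\lambda x_2\2$ and $(x_1+\lambda x_2)\2=x_1\2+\lambda^2 x_2\2$ are independent), yet $[y,(x_1+\lambda x_2)\2]=0$. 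No ``mild enlargement'' is needed.
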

Recall from \cite{sb_kosz} that a standard subalgebra of a graded, restricted Lie algebra  $\mf g=\sum_{i\geq 1}\mf g_i$ is a restricted subalgebra generated by elements of degree $1$ of $\mf g$ (see \S \ref{sec:restr} for the definitions).

We also give a complete characterization of those graphs for which all the standard subalgebras of the associated T-RAAG Lie algebra over a field $k\neq \F_2$ are T-RAAG Lie algebras.

\begin{thm}
	Let $k\neq \F_2$ be any field of characteristic $2.$ Then, the following are equivalent: 
	\begin{enumerate}
		\item $\Gamma$ is a Droms mixed graph, and all the directed edges in any connected component of $\Gamma$ have a common origin.
		\item All standard subalgebras of $\mf a(\Gamma)$ are T-RAAGs.
		\item All standard subalgebras of $\mf a(\Gamma)$ are quadratic restricted Lie algebras.
		\item The cohomology ring of $\mf a(\Gamma)$ is universally Koszul.
	\end{enumerate} 
\end{thm}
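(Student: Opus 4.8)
The plan is to prove the theorem by establishing the cycle $(3)\Rightarrow(1)\Rightarrow(2)\Rightarrow(3)$ together with $(1)\Rightarrow(4)\Rightarrow(3)$. Two of these are essentially formal. The implication $(2)\Rightarrow(3)$ holds because a T-RAAG Lie algebra is quadratic by the very shape of its presentation. For $(4)\Rightarrow(3)$ one invokes the general fact — established for odd characteristic in \cite{sb_kosz}, but with a characteristic-free proof — that if the cohomology ring of a quadratic restricted Lie algebra is universally Koszul, then each of its standard subalgebras is Koszul, in particular quadratic. Finally $(1)\Rightarrow(4)$ will follow from $(1)\Rightarrow(2)$: under $(1)$ every standard subalgebra of $\mf a(\Gamma)$ is of the form $\mf a(\Delta)$, and since a standard subalgebra of a standard subalgebra is again one, the previous theorem forces $\Delta$ to be a Droms mixed graph, hence $\mf a(\Delta)$ is Koszul; all standard subalgebras being Koszul then makes the cohomology ring of $\mf a(\Gamma)$ universally Koszul, again by the criterion of \cite{sb_kosz}. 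It thus remains to prove $(3)\Rightarrow(1)$ and $(1)\Rightarrow(2)$.

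For $(3)\Rightarrow(1)$, the previous theorem already yields that $(3)$ forces $\Gamma$ to be a Droms mixed graph, so only the common-origin condition is at stake. Suppose some connected component of $\Gamma$ contains directed edges $a\to b$ and $c\to d$ with distinct origins $a\ne c$ (possibly $b=d$, or $b$ or $d$ coinciding with an origin). Fix $\omega\in\F_4\setminus\F_2$, so that $\omega^{2}\ne\omega$, and let $\mathfrak h=\gen{u,b,d}$ be the standard subalgebra of $\mf a(\Gamma)$ generated by $u=a+\omega c$, $b$ and $d$. Inside $\mf a(\Gamma)$ one has $[u,b]=a\2+\omega[c,b]$, $[u,d]=[a,d]+\omega c\2$ and $u\2=a\2+\omega^{2}c\2+\omega[a,c]$, and, using the identity $\ad(x\2)=(\ad x)^{2}$, one evaluates the relevant triple brackets; a PBW-type normal form for T-RAAG Lie algebras makes all the required linear-independence assertions explicit. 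A case distinction according to the local mixed subgraph induced on $\{a,b,c,d\}$ then shows in every case that $\mathfrak h$ is not quadratic — it carries a minimal relation in degree $3$ that is not a consequence of its quadratic relations, equivalently its Hilbert series differs in degree $3$ from that of its quadratic closure — so in particular $\mathfrak h$ is not a T-RAAG Lie algebra, contradicting $(3)$. The hypothesis $\F_4\subseteq k$ enters precisely through the choice of $\omega$ with $\omega^{2}\ne\omega$: over $\F_2$ the only available combination is $a+c$, for which $(a+c)\2=a\2+c\2+[a,c]$ fails to separate the two squares, and accordingly no condition beyond ``$\Gamma$ Droms'' is needed there, as recorded in the ``moreover'' clause of the previous theorem.

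For $(1)\Rightarrow(2)$ I would argue by induction on the number of vertices of $\Gamma$. If $\Gamma=\Gamma_1\sqcup\Gamma_2$, then $\mf a(\Gamma)$ is the coproduct of $\mf a(\Gamma_1)$ and $\mf a(\Gamma_2)$ in restricted Lie algebras, and by the Nielsen--Schreier/Kurosh theory a standard subalgebra of such a coproduct is a coproduct of a free restricted Lie algebra with standard subalgebras of the two factors; since the common-origin condition is inherited componentwise, the inductive hypothesis applies, and a coproduct of T-RAAGs with a free restricted Lie algebra is again a T-RAAG. In the connected case, pick a universal vertex $v$ (one exists for every connected Droms graph, and its removal leaves a Droms graph). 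If $v$ is incident to no directed edge, then $v$ is central, carries no relation on its square, and $\mf a(\Gamma)=\gen{v}\times\mf a(\Gamma\setminus v)$ is a direct product with $\gen{v}$ the one-generated abelian free restricted Lie algebra; a standard subalgebra of such a product is, up to possibly adjoining an isolated vertex, a standard subalgebra of $\mf a(\Gamma\setminus v)$, and induction applies. Otherwise $v$ is incident to a directed edge; as all directed edges of the component have the common origin $v_0$, either $v=v_0$, in which case $v_0$ generates an abelian ideal $\gen{v_0}$ with $\mf a(\Gamma)/\gen{v_0}\cong\mf a(\Gamma\setminus v_0)$ a \emph{plain} graph Lie algebra — all of whose standard subalgebras are RAAG Lie algebras, by Droms' theorem in the untwisted case (cf. \cite{sb_kosz}) — or $v=u_j$ is the head of $v_0\to u_j$, in which case $\mf a(\Gamma)=\mf a(\Gamma\setminus u_j)\rtimes\gen{u_j}$ is a semidirect product with $\ad(u_j)$ acting by the derivation sending $v_0\mapsto v_0\2$ and killing the remaining generators. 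In each case one shows that a standard subalgebra either absorbs the distinguished abelian part, becoming a T-RAAG whose defining graph acquires the correspondingly-placed directed edges, or projects isomorphically onto a standard subalgebra of the smaller Lie algebra. This is the restricted-Lie-algebra transcription of the analysis of subgroups of T-RAAGs carried out in \cite{sb_droms}, and it is where the bulk of the argument lies; throughout, the common-origin hypothesis is exactly what keeps the relevant abelian subalgebra normal and the extensions under control.

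The step I expect to be the main obstacle is the non-quadraticity computation in $(3)\Rightarrow(1)$: unlike the untwisted setting, the squares $v\2$ genuinely interact with the brackets, so one cannot simply read off the Hilbert series of a graph Lie algebra, and the obstructing degree-three syzygy must be isolated by hand in each local configuration of $\{a,b,c,d\}$, with careful bookkeeping of the $p$-operation in characteristic two. A close second is the twisted (Klein-type) extension case of $(1)\Rightarrow(2)$, where $\mf a(\Gamma)$ is not a direct product and the behaviour of standard subalgebras under the semidirect/ideal extension must be controlled directly, in analogy with \cite{sb_droms}.
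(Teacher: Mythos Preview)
Your cycle of implications is sound, and the reductions $(2)\Rightarrow(3)$, $(4)\Rightarrow(3)$, $(1)\Rightarrow(4)$ via $(1)\Rightarrow(2)$ are exactly how the paper organizes things. The substantive content is in $(3)\Rightarrow(1)$ and $(1)\Rightarrow(2)$, and in both places your plan diverges from the paper's and carries real difficulties.

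For $(3)\Rightarrow(1)$, your three-generator subalgebra $\langle a+\omega c,\,b,\,d\rangle$ with a case analysis on the induced graph on $\{a,b,c,d\}$ would be considerably more work than needed. The paper instead isolates a \emph{single} three-vertex obstruction: the triangle with two directed edges sharing a terminus (the cone over a single directed edge). Over any $k\supsetneq\F_2$ this T-RAAG has a non-quadratic \emph{two}-generated standard subalgebra $\langle v_1+\omega v,\,v_2\rangle$. The paper then argues purely graph-theoretically that whenever a connected Droms mixed graph has directed edges with distinct origins, this triangle appears as an induced subgraph --- using that a connected Droms mixed graph has a positive central vertex, which must then be the origin of an edge to any negative vertex it sees. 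This avoids your case analysis entirely.

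For $(1)\Rightarrow(2)$, your inductive scheme via the ideal $\langle v_0\rangle$ has a genuine gap. The quotient is indeed a plain RAAG, but your dichotomy ``absorbs the abelian part or projects isomorphically'' is not justified: writing generators of $\mf m$ as $x_i=\alpha_iv_0+y_i$, one finds $[x_i,x_j]=[y_i,y_j]+(\alpha_i\theta(y_j)+\alpha_j\theta(y_i))\,v_0\2$, where $\theta(y)$ is the sum of the negative-vertex coefficients of $y$. With several negative vertices and $k\neq\F_2$, these obstruction terms need not vanish, so the naive section from $\bar{\mf m}$ to $\mf m$ is not a homomorphism. (Your aside about the case $v=u_j$ a terminus being central is also off: with the common-origin condition this forces $|V|\le 2$, and in general $\mf a(\Gamma\setminus u_j)$ is not an ideal.)

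The paper circumvents this with two ingredients you do not mention. First, an explicit isomorphism (the ``directed star'' trick, Example~\ref{ex:nonrigid}) replaces a star $\vec\Sigma_n$ by a star with a \emph{single} directed edge, so one may assume $\Gamma$ has exactly one negative vertex $y$. Second, in that situation (Lemma~\ref{lem:single-}) one projects modulo the cone tip $v$; after Gauss elimination the single negative vertex $y$ appears in at most one generator $u_{i_0}$, and Kurosh gives $\mf h=\langle u_{i_0}\rangle\amalg\langle u_j:j\neq i_0\rangle$, so the lift $u_0\mapsto v+u_0$, $u_i\mapsto u_i$ is well-defined for free --- either because $u_0$ is the free factor, or because $v$ commutes with every $u_i$ not involving $y$. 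This free-product structure is precisely what fails with several negative vertices, and is why the reduction step is essential.
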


Analogously, we examine the Lie algebraic version of right-angled Coxeter groups, which are specific quotients of RAAGs where all the canonical generators have order two. To achieve a Droms-type result for these, it becomes necessary to extend our study to a broader class of restricted Lie algebras. These are defined by simple graphs whose vertices are labelled by elements of $\F_2.$ 
The corresponding generators of the restricted Lie algebra have torsion depending on their labelling. For these Lie algebras, the analogue of Droms' theorem proves to be independent of the ground field.
\begin{thm}
	Let $\Gamma=(V,E)$ be a graph endowed with a function $\theta:V\to \F_2=\{0;1\},$ and let $k$ be a field of characteristic $2.$ Let $\mf e(\Gamma,\theta)$ be the associated extended right-angled Lie algebra (ERA) with presentation \[\pres{v:\ v\in V}{\theta(u)u\2,\ [v,w]:\ u,v,w\in V,\ \{v,w\}\in E}.\] Then, the following are equivalent: 
	\begin{enumerate}
		\item $\Gamma$ is a Droms graph that does not contain any path-graph of length $2$ on which $\theta$ is non-constant, and where the middle vertex $v$ has $\theta(v)=0.$
		\item The graph $\Gamma$ is Droms, and, for all connected induced subgraphs $\Lambda$ of $\Gamma,$ either $\theta\vert_{V(\Lambda)}\equiv 0,$ or there exists a central vertex $v$ of $\Lambda$ with $\theta(v)=1.$
		\item All the standard subalgebras of the ERA Lie algebra $\mf e(\Gamma)$ are ERA Lie algebras.
	\end{enumerate}
\end{thm}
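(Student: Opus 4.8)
My plan is to establish $(1)\Leftrightarrow(2)$, then $(2)\Rightarrow(3)$, then $(3)\Rightarrow(1)$.

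\emph{The equivalence $(1)\Leftrightarrow(2)$} is purely combinatorial; both statements presuppose that $\Gamma$ is Droms. The only input I need is the fact that \emph{every connected Droms graph has a central (dominating) vertex}, which follows from the recursive description of Droms graphs as iterated disjoint unions and joins together with the remark that in a join $\Lambda_1*\Lambda_2$ of nonempty graphs at least one factor must be complete (otherwise two nonadjacent pairs, one in each factor, induce a $4$-cycle), so a vertex of a complete factor dominates the join. Granting this: for $(2)\Rightarrow(1)$, an induced path $u-v-w$ with $\theta(v)=0$ and $\theta$ nonconstant on $\{u,v,w\}$ is connected with unique central vertex $v$, so (2) forces $\theta(v)=1$, a contradiction. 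For $(1)\Rightarrow(2)$, I take a connected induced subgraph $\Lambda$ violating (2) of minimal order, write $\Lambda=\Lambda_1*\Lambda_2$, apply the minimality hypothesis to the connected component of each $\Lambda_i$ carrying a vertex labelled $1$, and propagate $\theta\equiv 1$ across the nonedges of $\Lambda$ using the forbidden $P_3$-configuration of (1); then any central vertex of $\Lambda$ is labelled $1$, contradicting the choice of $\Lambda$.

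\emph{For $(2)\Rightarrow(3)$} I induct on $\abs{V(\Gamma)}$, which is legitimate because (2) is hereditary for induced subgraphs and standard subalgebras of standard subalgebras are standard. If $\Gamma$ is disconnected, $\mf e(\Gamma,\theta)$ is the coproduct of restricted Lie algebras of the ERAs of its connected components, and a Kurosh-type subalgebra theorem writes a standard subalgebra as a coproduct of a free restricted Lie algebra on degree-one generators (the ERA of an edgeless graph) with standard subalgebras of the factors; these are ERAs by induction, and a coproduct of ERAs is the ERA of the disjoint union of their graphs. If $\Gamma$ is connected with $\abs{V(\Gamma)}\ge 2$, a central vertex $v$ exists (by (2) when $\theta\not\equiv0$, taken with $\theta(v)=1$; by the dominating-vertex lemma when $\theta\equiv 0$). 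Then $v$ is central in $\mf e(\Gamma,\theta)$, the restricted ideal it generates is the restricted subalgebra $\gen{v}\cong\mf e(\{v\},\theta(v))$, and inclusion of the remaining generators splits the quotient map onto $\mf e(\Gamma\setminus v,\theta)$, so $\mf e(\Gamma,\theta)\cong\gen{v}\times\mf e(\Gamma\setminus v,\theta)$ — that is, coning off a vertex corresponds to a direct product with $\mf e(\{v\},\theta(v))$. A standard subalgebra $\mathfrak s$ containing $v$ splits as $\gen{v}\times\mathfrak s'$ with $\mathfrak s'$ a standard subalgebra of $\mf e(\Gamma\setminus v,\theta)$, hence an ERA by induction; if $v\notin\mathfrak s$ the projection to $\mf e(\Gamma\setminus v,\theta)$ is injective unless $\theta(v)=0$, in which case a generator of $\mathfrak s$ may carry a $v$-component whose square lies in $\gen{v}$ and one must still check, after choosing generators adapted to the grading, that $\mathfrak s$ is presented by monomial quadratic relations, i.e. is again an ERA.

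\emph{For $(3)\Rightarrow(1)$} I argue contrapositively and, via $(1)\Leftrightarrow(2)$, split the failure of (1). If $\Gamma$ is not Droms it contains an induced $4$-cycle or an induced $4$-vertex path on a vertex set $S$; since $\gen{S}=\mf e(\Gamma[S],\theta|_S)$ (a PBW/Gröbner-basis argument) and being locally ERA is inherited by standard subalgebras, it suffices to prove that $\mf e(C_4,\tau)$ and $\mf e(P_4,\tau)$ are never locally ERA, which I do by computing their cohomology rings from the quadratic presentations and applying the criterion that a quadratic restricted Lie algebra has all standard subalgebras quadratic exactly when its cohomology ring is universally Koszul (cf.\ \cite{sb_kosz}), the obstruction surviving from the classical graph Lie algebra case. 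If $\Gamma$ is Droms but contains an induced path $u-v-w$ with $\theta(v)=0$ and $\theta(u)=1$, I exhibit the bad standard subalgebra directly: take $\mathfrak s=\gen{w,\ u+v}$ inside $\mf e(\Gamma,\theta)$. Using $[u,v]=[v,w]=0$ and the centrality of $v$, one computes (in characteristic two) $[[w,u+v],u+v]=[[w,u],u]=[u,[u,w]]=[u\2,w]=0$ because $u\2=0$; since $w\2$, $(u+v)\2=v\2$ and $[w,u]$ are linearly independent apart from the possible relation $w\2=0$, and a $w$-degree count shows that $[[w,u+v],u+v]=0$ is not a consequence of $w\2=0$, the subalgebra $\mathfrak s$ carries a genuine cubic relation and is therefore not quadratic, hence not an ERA — contradicting (3).

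\emph{Expected main obstacle.} The combinatorics of $(1)\Leftrightarrow(2)$ and the explicit cubic relation above are routine. The two substantial points are (a) in $(2)\Rightarrow(3)$, controlling standard subalgebras of $\gen{v}\times\mf e(\Gamma\setminus v,\theta)$ that meet the torsion-free factor $\gen{v}$, which is precisely where characteristic two intervenes, since the square of a degree-one element has degree two and may fall into $\gen{v}$; and (b) in $(3)\Rightarrow(1)$, the cohomology computation for $\mf e(C_4,\tau)$ and $\mf e(P_4,\tau)$ for every decoration $\tau$ together with the verification that universal Koszulity fails in each case.
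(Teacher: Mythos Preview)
Your overall architecture---$(1)\Leftrightarrow(2)$ combinatorially, $(2)\Rightarrow(3)$ by induction with a Kurosh/direct-product split, $(3)\Rightarrow(1)$ by exhibiting bad subalgebras---is exactly the paper's. The combinatorics of $(1)\Leftrightarrow(2)$ is fine (the paper's Lemma for $(1)\Rightarrow(2)$ is shorter: a connected Droms subgraph has a central vertex $v$; if every central vertex has label $0$ and some non-central $u$ has label $1$, pick $w$ not adjacent to $u$ and the induced $u\!-\!v\!-\!w$ is forbidden). Your explicit cubic relation for the forbidden $P_3$ agrees with the paper's Example.

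There is a genuine gap in your $(2)\Rightarrow(3)$, and it is precisely the obstacle (a) you flag. When $\theta\not\equiv 0$ you correctly pick a central $v$ with $\theta(v)=1$, so $\gen{v}\cong\mf k/\mf k\2$ is elementary abelian and the direct-product argument (the paper's Lemma on $\mf g\times\mf k/\mf k\2$) goes through. But when $\theta\equiv 0$ you are forced to take $\theta(v)=0$, and then the direct-product argument with a \emph{torsion-free} abelian factor genuinely fails in general: the paper's Example~\ref{ex:nonBKds} shows a BK restricted Lie algebra $\mf g$ with $\mf g\times\mf k$ not BK. You cannot ``still check'' your way around this. The paper's fix is to \emph{bypass} the direct product entirely in this case: if $\theta\equiv 0$ then $\mf e(\Gamma,\theta)=\mf a(\Gamma)$ is the restrictification of the ordinary RAAG Lie algebra $\li_\Gamma$, and the restricted Droms theorem for RAAGs (Lemma~\ref{lem:simpleBK}, via restrictification of the ordinary result) already gives that every standard restricted subalgebra is a RAAG, hence an ERA with trivial labelling. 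So the missing idea is simply: treat the $\theta\equiv 0$ branch as the RAAG case and cite it, rather than trying to run the cone argument with a free-abelian factor.

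For $(3)\Rightarrow(1)$ in the $C_4$/$P_4$ cases, your plan to compute cohomology and disprove universal Koszulity for every labelling $\tau$ is correct in principle (since ERAs are Koszul, a non-Koszul standard subalgebra is not ERA), but it is much heavier than needed and your stated criterion (``all standard subalgebras \emph{quadratic} iff universally Koszul'') is not what is available in characteristic $2$---only the BK $\Leftrightarrow$ universally Koszul direction is. The paper instead writes down, uniformly in $\tau$, the standard subalgebra $\gen{x_1+x_2,\,x_3,\,x_4}$ of $\mf e(C_4,\tau)$ or $\mf e(P_4,\tau)$ and checks in two lines that its only degree-$2$ relations are $[x_3,x_4]=0$ and possibly $x_3\2=0$, $x_4\2=0$, while $[[x_1+x_2,x_3],x_4]=0$ is a cubic relation not implied by these. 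This replaces your obstacle (b) with a direct computation.
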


We also consider the Bloch-Kato property for restricted Lie algebras. A restricted Lie algebra $\mathfrak{g}$ is said to be weakly Bloch-Kato if all of its standard restricted subalgebras are quadratic, and Bloch-Kato if all of its standard restricted subalgebras are Koszul. Such property is intended to mimic the behaviour of subgroups of maximal pro-$p$ quotients of $G_\F$ for some field $\F.$ Unlike the zero/odd characteristic cases, where (restricted) Lie algebras are Koszul when all of their subalgebras generated in degree $1$ are quadratic (see \cite{sb_kosz}), in characteristic two this is false in general. Nevertheless, we demonstrate that within the classes of restricted Lie algebras under consideration, this property holds true.  
\begin{thm}
	Let $\mf g$ be either a T-RAAG Lie algebra or an ERA Lie algebra over an arbitrary field of characteristic $2.$ Then the following are equivalent: 
	\begin{enumerate}
		\item The restricted Lie algebra $\mf g$ is weakly Bloch-Kato.
		\item The restricted Lie algebra $\mf g$ is Bloch-Kato.
	\end{enumerate}
\end{thm}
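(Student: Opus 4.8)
The implication $(2)\Rightarrow(1)$ is immediate, since every Koszul algebra is quadratic; the substance of the statement is the converse $(1)\Rightarrow(2)$. Rather than attacking Koszulity head-on --- which, as recalled in the introduction, is genuinely open for quadratic restricted Lie algebras in characteristic $2$ --- the plan is to route the argument through the combinatorial classifications already obtained, whose admissible graphs happen to have universally Koszul (hence Koszul) cohomology.

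Assume $\mf g=\mf a(\Gamma)$ is a weakly Bloch-Kato T-RAAG Lie algebra, so that every standard subalgebra of $\mf a(\Gamma)$ is quadratic. By the T-RAAG classifications above (Theorem~1.1 when $k=\F_2$, Theorem~1.2 when $k\supseteq\F_4$, and their common refinement for the remaining fields of characteristic $2$), this forces $\Gamma$ to be a Droms mixed graph and, when $k\neq\F_2$, to have the property that the directed edges within each connected component share a common origin. Call a mixed graph satisfying these requirements \emph{admissible}. For an admissible $\Gamma$ the same classification yields the converse conclusions: every standard subalgebra $\mf h$ of $\mf a(\Gamma)$ is itself a T-RAAG Lie algebra, $\mf h\cong\mf a(\Gamma')$, and moreover $H^\bu(\mf a(\Gamma),k)$ is universally Koszul. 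The mixed graph $\Gamma'$ attached to $\mf h$ is an induced sub-datum of $\Gamma$ (an induced subgraph equipped with the induced orientation), and admissibility --- being Droms together with the common-origin condition --- is inherited by such sub-data; hence $\Gamma'$ is again admissible.

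Consequently each standard subalgebra $\mf h\cong\mf a(\Gamma')$ of $\mf a(\Gamma)$ is a T-RAAG Lie algebra with admissible defining graph, so $H^\bu(\mf h,k)$ is universally Koszul, hence Koszul. Since $\mf h$ is quadratic, Koszulity of the cohomology ring $H^\bu(\mf h,k)$ is equivalent, by quadratic duality, to Koszulity of $\mf h$ (equivalently, of its restricted enveloping algebra); therefore $\mf h$ is Koszul. As $\mf h$ was an arbitrary standard subalgebra, $\mf a(\Gamma)$ is Bloch-Kato, which proves $(1)\Rightarrow(2)$ in the T-RAAG case. The ERA case is entirely parallel, with Theorem~1.3 in place of Theorems~1.1--1.2: if $\mf e(\Gamma,\theta)$ is weakly Bloch-Kato then $(\Gamma,\theta)$ must lie in the admissible family of labelled graphs described there (this is implicit in the proof of that theorem, where a non-admissible labelled graph is shown to produce a non-quadratic standard subalgebra), all of whose members have, as standard subalgebras, only ERA Lie algebras $\mf e(\Gamma',\theta')$ with $(\Gamma',\theta')$ again admissible of the same form; these have universally Koszul, hence Koszul, cohomology, and one concludes as before.

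The main work is hidden in the reduction step: one must identify precisely which combinatorial datum governs a given standard subalgebra of $\mf g$, verify that it sits inside the datum of $\mf g$ as an induced sub-object, and check that each of the conditions appearing in Theorems~1.1--1.3 (being Droms, the common-origin condition on directed edges, and the constraint on $\theta$ along length-$2$ paths) is stable under this passage; this is the place where a careful analysis of standard subalgebras of graph Lie algebras --- already needed for the earlier theorems --- is reused. A secondary and milder point is that Theorems~1.1 and~1.2 as stated treat only $k=\F_2$ and $k\supseteq\F_4$; for the intermediate fields of characteristic $2$ one invokes the same dichotomy, which holds because the obstructing examples require nothing more than a scalar $\lambda\in k\setminus\F_2$. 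Once these points are in place the equivalence $(1)\Leftrightarrow(2)$ is formal.
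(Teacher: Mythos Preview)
Your overall strategy is precisely the paper's: the theorem has no stand-alone proof but is read off from the classification results, each of which already lists both ``weakly Bloch-Kato'' and ``Bloch-Kato'' (equivalently, ``universally Koszul cohomology'') among its equivalent conditions---the Proposition of \S5.1 for $k=\F_2$; the Corollary of \S5.2 together with Example~\ref{ex:F4} and Proposition~\ref{prop:nonspecial} for $k\neq\F_2$; and the Theorem of \S7 for ERA Lie algebras. Your remark about fields of characteristic~$2$ not containing $\F_4$ is correct and handled exactly as you say: the obstructing Example~\ref{ex:F4} needs only a scalar in $k\setminus\F_2$.

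There is, however, a wrong step in your execution. You assert that the defining graph $\Gamma'$ of a standard subalgebra $\mf h\cong\mf a(\Gamma')$ is ``an induced sub-datum of $\Gamma$ (an induced subgraph equipped with the induced orientation)''. This is neither proved in the paper nor true: standard subalgebras are generated by arbitrary $k$-linear combinations of the canonical generators, and the graph $\Gamma'$ produced by the inductive arguments of \S5 and \S7 is a \emph{new} graph built from the structure of $\mf h$, not an induced subgraph of $\Gamma$. The paper never makes this claim, and your subsequent deduction that $\Gamma'$ inherits admissibility from $\Gamma$ therefore has no foundation.

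The good news is that the claim is also unnecessary. What the classification proofs actually yield is that every standard subalgebra $\mf h$ is a T-RAAG on a \emph{special} mixed graph (they exhibit a signature on its defining graph), respectively an ERA Lie algebra. Koszulity of $\mf h$ then follows directly from Corollary~\ref{cor:TRAAGkosz} in the T-RAAG case and from Proposition~\ref{prop:ERAembed}(2) in the ERA case. You do not need $\Gamma'$ to be admissible, and you do not need the detour through universal Koszulity of $H^\bu(\mf h)$ followed by quadratic duality; specialness (respectively, being an ERA) already gives Koszulity. Replace your induced-subgraph paragraph with this one-line appeal and the argument is complete and coincides with the paper's.
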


\section{Restricted Lie algebras of characteristic two}
If not otherwise stated, henceforth we denote by $k$ an arbitrary field of characteristic two, and by $\F_2$ its prime field.

\subsection{Restricted Lie algebras}\label{sec:restr}
Let $V$ be a vector space over $k.$
The \textbf{free restricted Lie algebra} on $V$ is the (ordinary) Lie subalgebra of the commutator Lie algebra structure of the tensor $k$-algebra $T(V)=k\oplus V\oplus (V\otimes_{k}V)\oplus \dots$ that is generated by the $2^n$-th powers\[v^{2^n}=\underbrace{v\otimes \dots\otimes v}_{2^n\mbox{ \tiny  times}}\]of elements $v\in V,$ for $n\geq 0.$ 
We denote it by $\mf f(V),$ and, for elements $x,y\in \mf f(V),$ we write $[x,y]$ for their commutator $x\otimes y+y\otimes x,$ and $x\2$ for the element $x\otimes x$; we also set $x^{[2]^{n+1}}:=(x\pow)\2.$ It follows from our definition that $[x,x]=0$ for all $x\in \mf f(V).$
Considering $T(V)$ as a Hopf algebra in the usual way, $\mf f(V)$ is also the set of its primitive elements. 

The following identities hold true for all $\lambda\in k$ and $x,y\in \mf f(V)$:
\begin{enumerate}
	\item $(\lambda x)\2=\lambda^2x\2$;
	\item $[x\2,y]=[x,[x,y]]$;
	\item\label{brackets2} $(x+y)\2=x\2+[x,y]+y\2.$
\end{enumerate}

These define the notion of a $2$-\textbf{operation} on an arbitrary Lie algebra $\mf g.$ A Lie algebra over $k$ with a $2$-map is called a \textbf{restricted Lie algebra} (of characteristic two). Notice that, in the light of (\ref{brackets2}), the Lie brackets are determined by the $2$-map. 

Let $\mf g$ be a restricted Lie algebra.
A subalgebra (resp. an ideal) of the ordinary Lie algebra $\mf g$ is called a restricted subalgebra (resp. a restricted ideal) if it is closed under the $2$-map. For a subset $X\subseteq \mf g,$ we denote by $(X)$ the restricted ideal of $\mf g$ generated by $X.$
If $\mf g$ is a $\Z_+$-graded vector space $\mf g=\bigoplus_{i=1}^\infty\mf g_i$ such that $[x,y]\in \mf g_{i+j}$ and $x\2\in \mf g_{2i}$ for $x\in \mf g_i$ and $y\in \mf g_j,$ then we call $\mf g$ a (positively) \textbf{graded restricted Lie algebra}. Clearly, $\mf f(V)$ can be given a natural grading induced by the tensor degree of the tensor algebra. 
For instance, if $V=kx\oplus ky$ has dimension $2,$ then $\mf f(V)_1=V,$ and $\mf f(V)_2=kx\2\oplus k[x,y]\oplus ky\2.$
An ordinary Lie homomorphism $f:\mf g\to \mf h$ between two restricted graded Lie algebras will be simply called a restricted homomorphism if it satisfies $f(\mf g_i)\subseteq \mf h_i,$ and commutes with the $2$-map. The kernel of a restricted homomorphism $\mf g\to\mf h$ is a restricted ideal of $\mf g,$ and the image of a restricted subalgebra of $\mf g$ is a restricted subalgebra of $\mf h.$

If $R$ is a subset of $\mf f(V),$ then the quotient of $\mf f(V)$ by the restricted ideal generated by $R$ is denoted by $\pres{V}{R}$ (or by $\pres{v_i}{r_i}$ if $(v_i)$ and $(r_i)$ are bases of $V$ and $R,$ respectively). If $R$ is a homogeneous subspace of $\mf f(V),$ then $\pres{V}{R}$ is a graded restricted Lie algebra generated in degree $1,$ and, conversely, any graded Lie algebra $\mf g$ generated in degree $1$ is naturally isomorphic with $\pres{\mf g_1}{R},$ where $R$ is the kernel of the natural map $\mf f(\mf g_1)\to \mf g.$ Graded restricted Lie algebras generated in degree $1$ are called \textbf{standard}. For an arbitrary standard restricted Lie algebra $\mf g,$ we call the natural map $\mf f(\mf g_1)\to \mf g$ the \textbf{free cover} of $\mf g$; by abuse of language, $\mf f(\mf g_1)$ is also called the free cover of $\mf g.$ 
If $R=R_2\oplus R_3\oplus\dots$ is the homogeneous decomposition of the kernel of the free cover of $\mf g,$ then the restricted Lie algebra \[\q\mf g:=\pres{\mf g_1}{R_2},\] together with the natural map $\q\mf g\to \mf g,$ is called the \textbf{quadratic cover} of $\mf g.$ We say that $\mf g$ is \textbf{quadratic} if the map $\q\mf g\to \mf g$
is an isomorphism.

The \textbf{restricted} (universal) \textbf{envelope} $u(\mf g)$ of a $2$-restricted Lie algebra $\mf g$ is the quotient of the universal enveloping algebra of the ordinary Lie algebra $\mf g$ by the two-sided ideal generated by the elements $x\2-x^2$ for $x\in \mf g.$ One can realize $u(\mf g)$ as the quotient of the tensor algebra $T(\mf g)$ by the two-sided ideal generated by the following elements
\begin{enumerate}
	\item[(a)] $[x,y]+x\otimes y+y\otimes x,$ where $x,y\in \mf g;$
	\item[(b)] $x\2+x^2,$ where $x\in \mf g.$
\end{enumerate}
By the Milnor-Moore Theorem \cite{milnorMoore}, the algebra $u(\mf g)$ is an augmented Hopf algebra, whose set of primitive elements is $\mf g.$ 

If $\mf g$ is a graded restricted Lie algebra and $M$ is a graded restricted $\mf g$-module, that is $x\cdot m\in M_{i+j},$ and $x\2\cdot m=x\cdot(x\cdot m)$ for $x\in \mf g_i$ and $m\in M_j,$ we define the (bigraded restricted) \textbf{cohomology groups} of $\mf g$ as 
\[H^{ij}(\mf g,M)=\ext^{ij}_{u(\mf g)}(k,M)\] where $k\simeq u(\mf g)/\mf g u(\mf g)$ is the trivial $1$-dimensional module of $u(\mf g)$ concentrated in degree $0$ (see \cites{hochschild_restr,evansfuchs,quentin}). 

For $M=k,$ the whole cohomology $\bigoplus_{i,j\geq 0}H^{ij}(\mf g,k),$ simply denoted $H^\bullet(\mf g),$ can be endowed with a multiplication mapping (called the cup-product), making it a bigraded connected algebra over $k.$ The cohomological dimension $\cd\mf g$ of $\mf g$ is the supremum of the integers $n$ such that $H^n(\mf g)\neq 0$ (cf. \cite{weig}).
Recall that $\mf g$ is said to be a \textbf{Koszul} restricted Lie algebra if $H^{ij}(\mf g)=0$ unless $i=j.$

Henceforth, restricted Lie algebras are all assumed to be finitely generated; for example, for a standard restricted Lie algebra $\mf g,$ we assume $\dim\mf g_1<\infty.$

Recall from the Introduction the following crucial definition.
\begin{defin}
	A standard restricted Lie algebra $\mf g$ is \textbf{Bloch-Kato} (resp. \textbf{weakly Bloch-Kato}) if all of its standard restricted subalgebras are Koszul (resp. quadratic). 
\end{defin} The proof of \cite{sb}*{Thm. A} still applies to the restricted context, implying that a standard restricted Lie algebra $\mf g$ is Bloch-Kato iff its cohomology ring $H^\bullet(\mf g)$ is \textbf{universally Koszul}, which amounts to saying that all the quotients by ideals generated by elements of $H^1(\mf g)$ admit a linear resolution as $H^\bullet(\mf g)$-modules (see \cite{conca} for the original definition in the commutative context).

\begin{thm}
	A standard restricted Lie algebra over a field of positive characteristic is Bloch-Kato if and only if its restricted cohomology ring $H^\bullet(\mf g)$ is a universally Koszul algebra.
\end{thm}

It is worth noting that the restricted cohomology of a restricted Lie algebra $\mf g$ is not the same as the Chevalley-Eilenberg cohomology of the underlying ordinary Lie algebra $\mf g,$ which is the $\ext$-group of its ordinary universal enveloping algebra, i.e., \[H^\bullet_o(\mf g,M):=\ext^\bullet_{U(\mf g)}(k,M).\] However, the epimorphism $U(\mf g)\to u(\mf g)$ induces a natural map $H^\bullet(\mf g,M)\to H^\bullet_o(\mf g,M).$

\begin{exam}\label{ex:1dim}In this example, we consider quadratic restricted Lie algebras generated by a single element.
	
	(1) Let $\mf k$ be the free restricted Lie algebra on one generator. Its cohomology ring is the $2$-dimensional algebra $k[T]/(T^2).$ As an ordinary Lie algebra, $\mf k$ is the abelian Lie algebra of countable dimension, which is thus infinitely generated, i.e., its ordinary first cohomology group is non-zero in each internal degree $j$ that is a $2$-power. The Chevalley-Eilenberg cohomology of $\mf k$ is thus the exterior algebra over the vector space $k^{\mathbb N}$ of all sequences in $k.$ 
	
	The restricted Lie algebra associated to the Zassenhaus filtration of any powerful pro-$p$ group of rank $n$ is the direct product $\mf k^n$ of $n$ copies of $\mf k$ (see \cite[\S 2]{analyticprop}).
	
	(2) Consider the $1$-dimensional restricted Lie algebra $\mf g=\mf k/\mf k\2=\pres{x}{x\2}.$ The (ordinary) enveloping algebra $U(\mf g)$ of the ordinary Lie algebra $\mf g\simeq k$ is isomorphic to the polynomial ring $k[T],$ and hence the ordinary cohomology ring of $\mf g$ is the dual ring $k[\varepsilon]/(\varepsilon^2).$ In particular, it is a finite-dimensional algebra. 
	
	On the other hand, the restricted envelope $u(\mf g)$ is isomorphic to the quotient ring $k[S]/(S^2),$ proving that its cohomology ring is the polynomial ring $k[\delta],$ which is infinite-dimensional. 
	
	The restricted Lie algebra of the finite abelian simple group $C_2$ is clearly isomorphic to $\mf k/\mf k\2.$ 
\end{exam}
We say that an element $x$ of a restricted Lie algebra $\mf g$ is a torsion element if $x\pow=0$ for some $n\geq 1.$ Thus $\mf g$ is \textbf{torsion-free} if the zero vector is the unique torsion element. Torsion-free abelian Lie algebras are called \textbf{free abelian}. On the contrary, $\mf g$ is \textbf{elementary abelian} if $x\2=0$ holds for every $x\in \mf g.$ In fact, notice that this also implies that $[x,y]=0,$ for all $x,y\in \mf g.$ 
For example, the restricted Lie algebra $\mf k$ from Example \ref{ex:1dim}(1) is free abelian, while its quotient $\mf k/\mf k\2$ of Example \ref{ex:1dim}(2) is elementary abelian.

If $V$ is a $k$-vector space, the \textbf{exterior algebra} $\Lambda(V)$ of $V$ is the quotient of the tensor algebra $T(V)=k\oplus V\oplus (V\otimes_k V)\oplus\dots$ by the ideal generated by the elements $x\otimes x,$ for $x\in V.$ Notice that, since $k$ has characteristic two, the ideal properly contains the ideal generated by $x\otimes y+y\otimes x,$ for $x,y\in V,$ which is, instead, the defining ideal of the \textbf{symmetric algebra} $S(V)$ as a quotient of $T(V).$

\begin{exam}\label{ex:nonBKds}
	Consider the direct product $\mf g\times \mf k$ of the restricted Lie algebra  $\mf g=\pres{v,w}{[v,w]+v\2}$ with the free abelian Lie algebra $\mf k=\gen z$ of rank $1,$ and its subalgebra $\mf h=\gen{v+z,w}.$
	
	One has $[v+z,w]=v\2,$ and $(v+z)\2=v\2+z\2,$ which imply that $$\mf h_2=k\cdot v\2+k\cdot z\2+k\cdot w\2$$ has dimension $3.$ In particular, the quadratic cover $\q\mf h$ is a free restricted Lie algebra of rank $2.$ However, $[[v+z],w\2]=[v\2,w]=[v,v\2]=0$ is a non-trivial relation of $\mf h,$ proving that it is not quadratic. 
	
	By the Corollary \ref{cor:TRAAGkosz} below, $\mf g$ is Koszul, and the proper standard subalgebras are all free abelian, hence Koszul. In particular, $\mf g$ is Bloch-Kato but $\mf g\times \mf k$ is not. Notice that $\mf g$ is not the restrictification of any ordinary quadratic Lie algebra.
\end{exam}
In particular, unlike the case of zero/odd characteristic, the direct product of a Bloch-Kato Lie algebra with a free abelian Lie algebra is not always Bloch-Kato. 
\begin{rem}
	Since the cohomology of a direct product of restricted Lie algebras is the wedge product of the cohomology rings of the factors, we see that the twisted extension of a universally Koszul algebra is not always universally Koszul, in contradiction with Proposition 31 of \cite{MPPT}. The flaw in that proposition sits at the end of the proof where the authors define the map $\mathbin{\rotatebox[origin=c]{270}{$\xi$}}:B\to B$ by $a+a'x\mapsto a+a'(x-l)$ for $a,a'\in A,$ claiming that it is an automorphism. Nevertheless, this is not well-defined in general as $\mathbin{\rotatebox[origin=c]{270}{$\xi$}}(x)^2=(x-l)^2=x^2+l^2=tx+l^2$ and $\mathbin{\rotatebox[origin=c]{270}{$\xi$}}(tx)=t(x-l).$ It is well-defined only if $tl=l^2.$
\end{rem}

We now show that the Bloch-Kato property is preserved if the second factor is elementary abelian instead of free abelian.

\begin{lem}\label{lem:directbk}
	Let $\mf g$ be a (weakly) Bloch-Kato restricted Lie algebra. Then $\mf g\times \mf k/\mf k\2$ is (weakly) Bloch-Kato.
\end{lem}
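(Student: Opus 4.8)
Set $\mf e:=\mf k/\mf k\2=\pres{x}{x\2}$; this is the $1$-dimensional elementary abelian restricted Lie algebra, which is concentrated in degree $1$. Write $\mf h:=\mf g\times\mf e$, so that $\mf h_1=\mf g_1\oplus kx$. The plan is to first describe the standard subalgebras of $\mf h$ up to isomorphism, and then to verify that the classes of quadratic and of Koszul restricted Lie algebras are closed under forming a direct product with $\mf e$; combining the two yields the lemma.

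For the structural step, let $\mf b=\langle W\rangle$ be a standard subalgebra of $\mf h$ with $W\subseteq\mf h_1$, and restrict to $\mf b$ the projection $\mathrm{pr}\colon\mf h\to\mf g$. Since $\mathrm{pr}$ is a restricted homomorphism, its image is the standard subalgebra $\mf a:=\langle\mathrm{pr}(W)\rangle$ of $\mf g$, and its kernel is $\mf b\cap kx$. The decisive point is that $\mf e=kx$ lives in degree $1$ only, so $\mf b\cap kx=\mf b_1\cap kx=W\cap kx$. If $x\notin W$ --- in particular whenever $W\subseteq\mf g_1$ --- this intersection is $0$, hence $\mathrm{pr}$ restricts to an isomorphism $\mf b\xrightarrow{\ \sim\ }\mf a$. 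If $x\in W$, then $W=(W\cap\mf g_1)\oplus kx$, the inclusion $\mf a=\langle W\cap\mf g_1\rangle_{\mf g}\hookrightarrow\mf b$ splits $\mathrm{pr}|_{\mf b}$, and since $x$ is central in $\mf h$ with $x\2=0$ we obtain $\mf b=\mf a\times\mf e$. Thus every standard subalgebra of $\mf h$ is isomorphic either to a standard subalgebra $\mf a$ of $\mf g$, or to $\mf a\times\mf e$ for such an $\mf a$.

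For the second step, note first that $\mf e$ is quadratic (one checks $\q\mf e=\mf e$ directly) and Koszul: by Example \ref{ex:1dim}(2) one has $u(\mf e)\cong k[S]/(S^2)$, whose cohomology ring $k[\delta]$ is concentrated on the diagonal. If $\mf a$ is quadratic I would prove $\q(\mf a\times\mf e)\cong\q\mf a\times\q\mf e=\mf a\times\mf e$: the free cover of $\mf a\times\mf e$ is $\mf f(\mf a_1\oplus kx)$, whose space of degree-$2$ relations is the direct sum of the space of degree-$2$ relations of $\mf a$, the line $kx\2$, and the subspace $[\mf a_1,x]$ of cross-commutators; killing the last two already produces $\mf f(\mf a_1)\times\mf e$, after which killing the degree-$2$ relations of $\mf a$ gives $\q\mf a\times\mf e$. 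If instead $\mf a$ is Koszul, then, using that the cohomology of a direct product of restricted Lie algebras is the wedge product of the cohomology rings of the factors (as recalled in the remark following Example \ref{ex:nonBKds}), $H^\bu(\mf a\times\mf e)\cong H^\bu(\mf a)\otimes_k k[\delta]$ is again diagonal, so $\mf a\times\mf e$ is Koszul.

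Combining the two steps, if $\mf g$ is weakly BK then every standard subalgebra of $\mf h$ is isomorphic to some $\mf a$ or $\mf a\times\mf e$ with $\mf a$ quadratic, hence is itself quadratic, so $\mf h$ is weakly BK; the BK case is identical, with ``quadratic'' replaced throughout by ``Koszul''. I expect the only genuine obstacle to be the bookkeeping in the structural step, namely ruling out that passing from $\mf b$ to its image under $\mathrm{pr}$ loses relations. This is exactly where it is essential that $\mf e$ occupies a single degree: for the free abelian $\mf k$, which has a nonzero component in every $2$-power degree, a subalgebra can meet $\ker(\mathrm{pr})$ in higher degrees --- as the identity $(w+z)\2=w\2+z\2$ illustrates --- and then the statement genuinely fails, cf. Example \ref{ex:nonBKds}.
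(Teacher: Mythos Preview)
Your proposal is correct and follows essentially the same route as the paper: both arguments use the projection $\mathrm{pr}\colon\mf g\times\mf e\to\mf g$ to identify a standard subalgebra $\mf b$ with either a standard subalgebra $\mf a$ of $\mf g$ or with $\mf a\times\mf e$, the crucial observation being that $\ker(\mathrm{pr})=\mf e$ is concentrated in degree~$1$. The paper organizes this slightly differently --- it picks generators $x_1,\dots,x_n,x_0+z$ and builds the inverse map $\phi\colon\mf a\to\mf b$, $x_0\mapsto x_0+z$, checking by hand that the quadratic relations of $\mf a$ survive (using $z\2=0$ and $[z,\mf g]=0$), and then notes $\mathrm{pr}|_{\mf b}$ is its inverse --- whereas you go straight to injectivity of $\mathrm{pr}|_{\mf b}$ via $\mf b\cap kx=\mf b_1\cap kx$, which is a cleaner packaging of the same idea. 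Your explicit ``second step'' (that $\mf a\times\mf e$ inherits quadraticity/Koszulness) is something the paper simply asserts without comment.
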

\begin{proof}
	Let $\mf m$ be a standard subalgebra of $\mf g\times \mf k/\mf k\2,$ and fix a non-zero element $z$ of $\mf k/\mf k\2.$ If $\mf m$ is a standard subalgebra of $\mf g,$ then it is Koszul (resp. quadratic) as $\mf g$ is Bloch-Kato (resp. weakly Bloch-Kato). On the other hand, if $\mf m$ is not contained in $\mf g,$ there exist elements $x_0,x_1,\dots,x_n\in \mf c=\mf c(\Gamma)$ such that $\mf m=\gen{x_1,\dots,x_n,x_0+z}.$
	The standard subalgebra $\mf h=\gen{x_0,x_1,\dots,x_n}$ of $\mf g$ is Koszul (resp. quadratic) as $\mf g$ is (resp. weakly) Bloch-Kato. We can suppose that $x_0$ is independent over $k$ from $x_1,\dots,x_n,$ as otherwise $\mf h=\gen{x_1,\dots,x_n},$ whence we would get that $\mf m=\mf h\times \gen{z}$ is Koszul (resp. quadratic). Now, the map \[\phi:\mf h\to\mf m: \quad x_0\mapsto x_0+z,\quad x_i\mapsto x_i\ (i=1,\dots,n)\] is a well-defined isomorphism. Indeed, if $r=\sum_{0\leq i<j\leq n}\alpha_{ij}[x_i,x_j]+\sum_{i=0}^n\beta_i x_i\2$ is a relation in $\mf h,$ then 
	\begin{align*}
		\sum_{0\leq i<j\leq n}\alpha_{ij}&[\phi(x_i),\phi(x_j)]+\sum_{i=0}^n\beta_i \phi(x_i)\2=\\
		&=\sum_{j=1}^n\alpha_{0j}[x_0+z,x_j]+\beta_0(x_0+z)\2+\sum_{1\leq i<j\leq n}\alpha_{ij}[x_i,x_j]+\sum_{i=1}^n\beta_i x_i\2=\\
		&=\sum_{j=1}^n\alpha_{0j}[x_0,x_j]+\beta_0(x_0\2+[x_0,z]+z\2)+\sum_{1\leq i<j\leq n}\alpha_{ij}[x_i,x_j]+\sum_{i=1}^n\beta_i x_i\2=\\
		&=\sum_{0\leq i<j\leq n}\alpha_{ij}[x_i,x_j]+\sum_{i=0}^n\beta_i x_i\2=r=0,
	\end{align*}
	and the restriction to $\mf m$ of the natural projection $\pi:\mf g\times \mf k/\mf k\2\to \mf g$ is the inverse of $\phi.$
\end{proof}

The same proof, with the omission of the $2$-powers of elements, shows that if $\mf g$ is the restrictification of an ordinary Bloch-Kato Lie algebra, then $\mf g\times \mf k$ is Bloch-Kato (cf. Example \ref{ex:nonBKds}). Moreover, for these Lie algebras there is no distinction between the weak and strong Bloch-Kato properties. 

\begin{cor}
	Let $A$ be a commutative, graded algebra over $k.$ Then the symmetric tensor product $A\wedge k[T]$ is universally Koszul.	\end{cor}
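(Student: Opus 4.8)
The plan is to realize $A\wedge k[T]$ as the restricted cohomology ring of a direct product of restricted Lie algebras and then invoke Lemma~\ref{lem:directbk}. By Example~\ref{ex:1dim} the polynomial ring $k[T]$ is the cohomology ring $H^\bu(\mf k/\mf k\2)$ of the elementary abelian restricted Lie algebra of rank one, and, as recalled before Lemma~\ref{lem:directbk}, the cohomology of a direct product of restricted Lie algebras is the wedge product of the cohomology rings of the two factors. Hence, as soon as we exhibit a Bloch--Kato restricted Lie algebra $\mf g$ with $H^\bu(\mf g)\cong A$, we obtain
\[
A\wedge k[T]\ \cong\ H^\bu(\mf g)\wedge H^\bu(\mf k/\mf k\2)\ \cong\ H^\bu\big(\mf g\times \mf k/\mf k\2\big),
\]
and the right-hand side is universally Koszul, since $\mf g\times\mf k/\mf k\2$ is Bloch--Kato by Lemma~\ref{lem:directbk}, and a standard restricted Lie algebra is Bloch--Kato exactly when its cohomology ring is universally Koszul by the restricted form of \cite[Thm.~A]{sb} recalled in \S\ref{sec:restr}.

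To construct such a $\mf g$, I would pass to the quadratic dual $A^!$ of $A$ (available since $A$, being universally Koszul, is in particular a quadratic Koszul algebra). Because $A$ is commutative, its space of defining relations contains every tensor $v\otimes w+w\otimes v$ with $v,w\in A_1$; taking orthogonal complements, the relations defining $A^!$ lie in the subspace of symmetric $2$-tensors of $A_1^*\otimes A_1^*$. In characteristic two this is precisely the condition under which the defining ideal of $A^!$ is a Hopf ideal of the tensor algebra $T(A_1^*)$ once the generators $A_1^*$ are declared primitive: for a homogeneous quadratic element $r$ one computes $\Delta(r)=r\otimes 1+1\otimes r+(r+r^{\mathrm{op}})$, and the mixed component $r+r^{\mathrm{op}}\in A_1^*\otimes A_1^*$ vanishes exactly when $r$ is a symmetric tensor. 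Thus $A^!$ is a connected graded cocommutative Hopf algebra generated in degree one, and by the Milnor--Moore theorem \cite{milnorMoore} it is the restricted envelope $u(\mf g)$ of $\mf g:=\mathrm{Prim}(A^!)$ — a standard restricted Lie algebra, finitely generated because $\dim_k A_1<\infty$. Since $A$ is Koszul, so is $A^!$, whence $H^\bu(\mf g)=\ext^\bu_{u(\mf g)}(k,k)\cong(A^!)^!\cong A$; and then $\mf g$ is Bloch--Kato by the same instance of \cite[Thm.~A]{sb}, as $H^\bu(\mf g)\cong A$ is universally Koszul.

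Putting the two steps together proves the corollary. The delicate part is the realization of $A^!$ as a restricted enveloping algebra: one has to carry out the elementary characteristic-two linear algebra showing that the orthogonal complement of the relations of $A$ consists of symmetric tensors (using that commutativity of $A$ forces $\Lambda^2(A_1)$ into the relation space, and that $\chr k=2$), conclude that the defining ideal of $A^!$ is therefore a Hopf ideal so that Milnor--Moore applies, and verify that $\mathrm{Prim}(A^!)$ is generated in degree one — the last being automatic from the fact that $u(\mathrm{Prim}(A^!))=A^!$ is generated in degree one. All the other ingredients — Lemma~\ref{lem:directbk}, Example~\ref{ex:1dim}, the cohomology-of-a-product formula, and the restricted \cite[Thm.~A]{sb} correspondence — are already established in the text. (When $A$ is a polynomial ring the duality step can be bypassed: then $A$ is directly the cohomology ring of an elementary abelian restricted Lie algebra, which is Bloch--Kato, and Lemma~\ref{lem:directbk} applies at once.)
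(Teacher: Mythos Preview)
Your argument is correct and follows exactly the route the paper intends: the corollary is stated without proof as an immediate consequence of Lemma~\ref{lem:directbk} together with the equivalence ``$\mf g$ is Bloch--Kato $\Leftrightarrow$ $H^\bu(\mf g)$ is universally Koszul'' recorded in \S\ref{sec:restr}, and you have correctly supplied the one missing ingredient, namely the realization of an arbitrary universally Koszul commutative $A$ as $H^\bu(\mf g)$ for some standard restricted Lie algebra $\mf g$ via the quadratic dual. Your verification that the relations of $A^!$ are symmetric tensors (hence primitive in $T(A_1^\ast)$, so that $A^!$ is a cocommutative Hopf algebra and Milnor--Moore applies) is the right characteristic-$2$ computation; note also that the corollary as printed omits the hypothesis that $A$ be universally Koszul, which you have correctly read into it from context.
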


We finally provide another example of a Bloch-Kato restricted Lie algebra that is not the restrictification of an ordinary Lie algebra. 
\begin{exam}\label{ex:x2+y2}
	The Lie algebra $\mf s=\pres{x,y}{x\2+y\2}$ is quadratic, and, being a $1$-relator, its cohomological dimension is $2,$ proving that it is Koszul. Moreover, all the proper subalgebras are free abelian, hence Koszul, which implies that $\mf s$ is Bloch-Kato. 
	
	Equivalently, notice that its quadratic dual admits a presentation as a connected algebra \[A=\pres{\xi,\eta}{\xi\eta,\eta\xi,\xi^2+\eta^2}_{\text{alg}},\] and hence it is universally Koszul as the annihilator of $a\xi+b\eta,$ $(a,b)\in k^2\setminus\{(0,0)\},$ is generated by the degree-$1$ element $b\xi+a\eta$ (see \cite{MPPT}). 
\end{exam}

\subsection{HNN-extensions}
Following Lichtman and Shirvani \cite{hnnLS}, we consider HNN-extensions of graded restricted Lie algebras in characteristic $2.$ 

Let $\mf g$ be a graded restricted Lie algebra over $k,$ and let $\mf h$ be a restricted homogeneous subalgebra. If $n$ is a positive integer, a degree-$n$ derivation of $\mf h$ into $\mf g$ is a $k$-linear map $\phi:\mf h\to \mf g$ such that, for every homogeneous element $x\in \mf h_i,$ one has 
\begin{enumerate}
	\item $\phi(x)\in \mf g_{i+n}$;
	\item $\phi(x\2)=[x,\phi(x)].$ 
\end{enumerate}
It follows from (\ref{brackets2}) of \S \ref{sec:restr} that the usual Leibniz product formula holds true, i.e., for $x,y\in \mf h,$ \[\phi([x,y])=[\phi(x),y]+[x,\phi(y)].\]

The aim of HNN-extensions is that of embedding $\mf g$ into a larger Lie algebra where the derivation $\phi$ extends to an inner derivation.

\begin{defin}
	Let $n$ be a positive integer, and let $\phi:\mf h\to \mf g$ be a degree-$n$ derivation of a homogeneous subalgebra $\mf h$ of a graded restricted Lie algebra $\mf g.$ A graded restricted Lie algebra $\mf e$ together with a homogeneous restricted homomorphism $\iota:\mf g\to \mf e,$ and an element $t\in \mf e$ such that $\ad(t)\vert_{\mf h}=\iota\phi$ is an HNN-extension of $\phi$ if it satisfies the following universal property: For all restricted Lie algebras $\mf m,$ all homogeneous restricted homomorphisms $\jmath:\mf g\to \mf m,$ and all $s\in \mf m$ such that $\ad(s)\vert_{\mf h}=\jmath\phi,$ there exists a unique homogeneous restricted homomorphism $\psi:\mf e\to \mf m$ such that $\psi\iota=\jmath.$
\end{defin}
The HNN-extension exists and is unique up to (unique) isomorphism; we denote it by $\hnn_\phi(\mf g,t).$
An explicit presentation for $\mf e$ is given in terms of a presentation $\mf g=\pres{V}{R}$ as follows
\[\mf e=\pres{V,t}{R, [t,x]+\phi(x):\ x\in \mf h}.\]
It is clear that the additional generator $t,$ which is called the stable letter of the extension, is a homogeneous element of $\mf e$ of the same degree as the derivation $\phi.$
Moreover, \cite{hnnLS} proves that $\mf g$ naturally embeds into the HNN-extension $\mf e.$ 

The proof of Proposition 2.2 by Kochloukova and Mart\'inez P\'erez \cite{cmp} adapts verbatim to the graded, restricted context. 
\begin{thm}[\cite{cmp}]\label{thm:hnn}
	Let $\mf e=\hnn_\phi(\mf g,t)$ be the HNN-extension of a graded restricted Lie $k$-algebra $\mf g$ with respect to a degree-$n$ derivation $\mf h\to \mf g.$ Then, there is a short exact sequence of left $u(\mf e)$-modules
	\[0\to u(\mf e)\otimes _{u(\mf h)}k\overset{\alpha}{\to}u(\mf e)\otimes _{u(\mf g)}k\overset{\beta}{\to}k\to 0 \]
	where, for $x\in u(\mf e),$ $\alpha(x\otimes 1)=xt\otimes 1,$ $\beta(x\otimes 1)=\varepsilon(x),$ and $\varepsilon:u(\mf e)\to k$ is the augmentation map. The maps $\alpha$ and $\beta$ are homogeneous maps of degree $n$ and $0,$ respectively. In particular, by the bigraded Eckmann-Shapiro lemma \cite[Thm. 1.4]{sb}, if $M$ is a graded restricted $\mf e$-module, there is a long exact sequence \[\dots \to H^{ij}(\mf e,M)\to H^{ij}(\mf g,M)\to H^{i,j-n}(\mf h,M)\to H^{i+1,j}(\mf e,M)\to \dots \]
\end{thm}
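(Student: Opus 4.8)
The plan is to follow the proof of \cite[Prop.~2.2]{cmp}, replacing ordinary enveloping algebras by restricted ones and importing, at the single point where it is needed, the Lichtman--Shirvani normal form for HNN extensions \cite{hnnLS} together with the restricted PBW theorem. The short exact sequence is the graded, restricted algebraic shadow of the simplicial chain complex of the Bass--Serre tree of a group HNN extension: $u(\mf e)\otimes_{u(\mf g)}k$ plays the role of the module on vertices, $u(\mf e)\otimes_{u(\mf h)}k$ that on edges, and $\alpha$ records the attaching of the stable letter $t$. Granting the sequence, the long exact sequence follows formally by applying the functors $\ext^{\bbu}_{u(\mf e)}(-,M)$ and invoking the bigraded Eckmann--Shapiro lemma \cite[Thm. 1.4]{sb}; thus I would split the argument into (i) the formal properties of $\alpha$ and $\beta$, (ii) exactness, and (iii) the cohomological consequence.

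For (i): $\beta$ is the map induced by the augmentation $\varepsilon\colon u(\mf e)\to k$, hence a homogeneous epimorphism of degree $0$, and $\beta\alpha=0$ because $\varepsilon(t)=0$ ($t$ having positive internal degree $n$). The one subtlety for $\alpha$ is well-definedness on $u(\mf e)\otimes_{u(\mf h)}k=u(\mf e)/u(\mf e)\mf h$: writing the HNN relation in $u(\mf e)$ as $ht=th+\phi(h)$ for $h\in\mf h$ (using $[h,t]=[t,h]=\phi(h)$ in characteristic two) and recalling $\mf h\subseteq\mf g$ and $\phi(\mf h)\subseteq\mf g$, one gets $\big(u(\mf e)\mf h\big)t\subseteq u(\mf e)\mf g$, so right multiplication by $t$ descends to $\alpha$; it is manifestly $u(\mf e)$-linear and homogeneous of degree $n=\deg t$.

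For (ii), exactness is soft at the two ends and delicate in the middle. As $\mf e$ is generated as a restricted Lie algebra by $\mf g$ together with $t$, every monomial of positive length in $u(\mf e)$ can be written with last factor in $\mf g$ or equal to $t$, whence $u(\mf e)=k\cdot1+u(\mf e)\mf g+u(\mf e)t$; therefore the augmentation ideal of $u(\mf e)$ equals $u(\mf e)\mf g+u(\mf e)t$, which gives at once $\ker\beta=(u(\mf e)\mf g+u(\mf e)t)/u(\mf e)\mf g=\image\alpha$ and that the cokernel of $\alpha$ is $k$. The remaining point --- and the one I expect to be the main obstacle --- is the \emph{injectivity} of $\alpha$, i.e.\ that $xt\in u(\mf e)\mf g$ forces $x\in u(\mf e)\mf h$. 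Here I would invoke that $\mf g$ embeds into $\mf e$ as a restricted subalgebra \cite{hnnLS}, choose a graded ordered basis of $\mf e$ in which a complement of $\mf g$ --- containing the stable letter $t$ --- comes first, then a complement of $\mf h$ in $\mf g$, then a basis of $\mf h$, and apply the restricted PBW theorem: this exhibits $u(\mf e)$ as a free right module over each of $u(\mf g)$ and $u(\mf h)$, with explicit bases of square-free ordered monomials, in which the normal form of \cite{hnnLS} lets one read off the power of $t$ occurring in a given element and conclude. This is a bookkeeping argument with monomials of exactly the same shape as in \cite{cmp}. The feature peculiar to characteristic two that must be tracked is that $u(\mf e)$ now carries the extra relations $v^2=v\2$, so one works with the \emph{square-free} restricted PBW basis --- including the iterated squares $t\2,(t\2)\2,\dots$ --- rather than the ordinary one; crucially, inside $u(\mf e)$, as opposed to its associated graded (a Grassmann-type ring), one has $t\2=t^2$, so the subalgebra generated by the stable letter and its $2$-powers is a polynomial-type ring and multiplication by $t$ is torsion-free there, which is precisely what makes the monomial computation go through as in characteristic zero.

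Finally, for (iii): applying $\ext^{\bbu}_{u(\mf e)}(-,M)$ to the short exact sequence yields a long exact sequence of $\ext$-groups. Since, by PBW again, $u(\mf e)$ is free --- hence flat --- as a right module over both $u(\mf g)$ and $u(\mf h)$, the bigraded Eckmann--Shapiro lemma identifies $\ext^{ij}_{u(\mf e)}\big(u(\mf e)\otimes_{u(\mf g)}k,M\big)\cong H^{ij}(\mf g,M)$ and $\ext^{ij}_{u(\mf e)}\big(u(\mf e)\otimes_{u(\mf h)}k,M\big)\cong H^{ij}(\mf h,M)$; taking into account that $\alpha$ is homogeneous of degree $n$, the map induced by $\alpha$ in the $\ext$ sequence is the one $H^{ij}(\mf g,M)\to H^{i,j-n}(\mf h,M)$, and substituting everything in produces $\dots\to H^{ij}(\mf e,M)\to H^{ij}(\mf g,M)\to H^{i,j-n}(\mf h,M)\to H^{i+1,j}(\mf e,M)\to\dots$, as asserted.
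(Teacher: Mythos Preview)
Your proposal is correct and matches the paper's approach exactly: the paper gives no independent argument but simply states that the proof of \cite[Prop.~2.2]{cmp} ``adapts verbatim to the graded, restricted context,'' which is precisely what you outline in detail. Your identification of the injectivity of $\alpha$ as the only nontrivial point, to be handled via the Lichtman--Shirvani normal form and the restricted PBW basis, is the right diagnosis and is the content behind the paper's one-line citation.
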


\begin{rem}
	In the theory of quadratic (restricted) Lie algebras of characteristic $\neq 2,$ \cite{sb_kosz} proves that if $\mf g$ is a quadratic Lie algebra, and $\mf m$ is a maximal proper standard subalgebra of $\mf g,$ with $t\in \mf g_1\setminus \mf m,$ then $\mf g=\hnn_{\phi}(\mf m,t),$ where $\phi:=\ad(x)\vert_{\mf h}:\mf h\to \mf m,$ and $\mf h=\gen{x\in \mf m_1:\ [t,x]\in \mf m}.$ Using this general decomposition, an easy inductive procedure shows that weakly Bloch-Kato Lie algebras of characteristic $\neq 2$ are Bloch-Kato. The bijection established by the restrictification functor also shows that restricted weakly Bloch-Kato Lie algebras of characteristic $\neq 2$ are Bloch-Kato.
	
	Unfortunately, in characteristic two, the above-mentioned decomposition is not always available; for instance, the quadratic Lie algebra $\pres{x,y}{(x+y)\2},$ as well as the Lie algebra $\mf s$ of Example \ref{ex:x2+y2}, does not split as an HNN-extension over $\gen{x}.$
\end{rem}

When $\mf h=\mf g,$ namely when $\phi$ is a derivation of $\mf g,$ then $\mf g$ is clearly a restricted ideal of the HNN-extension $\mf e=\hnn_\phi(\mf g,t),$ and there is a split extension \[0\to \mf g\to \mf e\to \mf k\to 0,\] that is $\mf e=\mf g\rtimes_\phi \mf k$ is a semidirect product.
\subsection{A generalized Witt formula}\label{sec:witt}
Let $\mu:\mathbb N\to \mathbb N$ be the M\"obius function, and consider its mod-$2$ version (see Petrogradsky \cite{restrwitt}) \begin{equation}
	\mu_2(n)=\begin{cases}
		\mu(n),& \text{if }n\text{ is odd,}\\
		2^{s-1}\mu(m),&\text{if }n=2^s m,\text{ and } m\text{ is odd}.
	\end{cases}
\end{equation}
It satisfies \begin{equation}\label{eq:moebius}
	\sum_{d\vert m}(-1)^{d+1}\mu_2(m/d)=\delta_{1,m}.\end{equation}

The \textbf{generalized necklace polynomial} mod-$2$ of degree $m\geq 1$ is defined similarly to the classical one, where the M\"obius function is replaced by $\mu_2,$ namely,
\[M_{2,m}(t)=\frac{1}{m}\sum_{j\vert m}\mu_2(m/j)t^j\in \Z[\![t]\!].\]

Now, recall that, given a graded, connected associative algebra $A$ over a field $k$ such that $\ext^\bullet_A(k,k)$ is finite-dimensional, one defines the \textbf{eigenvalues} of $A$ as the complex numbers $\lambda_i$ such that \[\chi_A(t):=\sum_{i,j\geq 0}\dim\left(\ext_A^{i,j}(k,k)\right)t^j=\prod_{i=1}^n(1+\lambda_i t).\] The left-hand side is the Hilbert series of the $\ext$-algebra of $A$ with respect to the \textit{internal degree}. If $A$ is the restricted envelope of a graded restricted Lie algebra $\mf g,$ then the numbers $\lambda_i$ are called the eigenvalues of $\mf g.$
\begin{thm}
	Let $\mf g$ be a graded, restricted Lie algebra with finite-dimensional cohomology and eigenvalues $\lambda_1,\dots,\lambda_n.$ Then, for all $m\geq 1,$ \[\dim\mf g_m=\sum_{i=1}^nM_{2,m}(\lambda_i).\]
\end{thm}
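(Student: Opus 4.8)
The plan is to play two descriptions of the Hilbert series $H_A(t):=\sum_{j\ge0}(\dim A_j)\,t^j$ of the restricted envelope $A:=u(\mf g)$ against each other. First, fix a homogeneous $k$-basis $(e_\alpha)$ of $\mf g$, with $e_\alpha$ of internal degree $d_\alpha$. By the restricted Poincar\'e--Birkhoff--Witt theorem, and since $\chr k=2$ forces the exponents in a PBW monomial to lie in $\{0,1\}$ (because $e\2=e^2$ in $A$, and $\mf g$ is closed under the $2$-operation), the ordered square-free monomials $e_{\alpha_1}\cdots e_{\alpha_r}$ with $\alpha_1<\dots<\alpha_r$ form a $k$-basis of $A$; collecting them by internal degree gives $H_A(t)=\prod_{\alpha}(1+t^{d_\alpha})=\prod_{k\ge1}(1+t^k)^{\dim\mf g_k}$.

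On the other hand, I would extract $H_A(t)$ from the cohomology. Let $P_\bu\to k$ be the minimal graded free resolution of the trivial $A$-module; then $P_i=\bigoplus_j A(-j)^{\beta_{ij}}$ with $\beta_{ij}=\dim H^{ij}(\mf g)$, each $P_i$ is a finite free module, and the resolution has finite length since $\cd\mf g<\infty$. Comparing Hilbert series along the exact complex yields the Euler-characteristic identity $H_A(t)\cdot\bigl(\sum_{i,j}(-1)^i\beta_{ij}t^j\bigr)=1$. Using that $H^{ij}(\mf g)=0$ unless $i\equiv j\pmod2$, the signed sum equals $\chi_A(-t)=\prod_{i=1}^n(1-\lambda_i t)$, so $H_A(t)=\prod_{i=1}^n(1-\lambda_i t)^{-1}$.

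Equating the two products, taking formal logarithms, and inserting $\log(1+t^k)=\sum_{\ell\ge1}\tfrac{(-1)^{\ell+1}}{\ell}t^{k\ell}$ and $-\log(1-\lambda t)=\sum_{m\ge1}\tfrac{\lambda^m}{m}t^m$, the coefficient of $t^N$ gives, with $c_k:=\dim\mf g_k$ and $p_N:=\sum_{i=1}^n\lambda_i^N$, the relation $p_N=\sum_{k\mid N}(-1)^{N/k+1}\,k\,c_k$. Reading this as the Dirichlet convolution $p=a\ast b$ with $a(m)=(-1)^{m+1}$ and $b(k)=k\,c_k$, and recalling from \eqref{eq:moebius} that $\mu_2$ is the Dirichlet inverse of $a$, convolution with $\mu_2$ yields $N\,c_N=\sum_{k\mid N}\mu_2(N/k)\,p_k=\sum_{i=1}^n\sum_{k\mid N}\mu_2(N/k)\lambda_i^k=N\sum_{i=1}^nM_{2,N}(\lambda_i)$, hence $\dim\mf g_N=\sum_{i=1}^nM_{2,N}(\lambda_i)$.

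The restricted PBW theorem and the formal power-series bookkeeping are routine. The delicate point is the vanishing $H^{ij}(\mf g)=0$ for $i\not\equiv j\pmod2$ used in the second step --- equivalently the identity $H_A(t)\,\chi_A(-t)=1$ --- which is where the hypotheses on $\mf g$ really enter: it holds in particular whenever $\mf g$ is Koszul, since then $\ext_A(k,k)$ is concentrated on the diagonal $i=j$. I expect this to be the main obstacle, to be settled either by appealing to Koszulity or by a direct analysis of the minimal free resolution of $k$ over $u(\mf g)$.
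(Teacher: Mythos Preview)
Your argument matches the paper's line by line: restricted PBW gives $h_A(t)=\prod_{k\ge1}(1+t^k)^{\ell_k}$, Fr\"oberg's formula gives $h_A(t)\,\chi_A(-t)=1$, and taking logarithms followed by Dirichlet inversion against $\mu_2$ yields the claim. The parity condition $H^{ij}=0$ for $i\not\equiv j\pmod2$ that you isolate as the delicate step is not argued separately in the paper either --- the identity $\chi_A(-t)h_A(t)=1$ is simply quoted as Fr\"oberg's formula from \cite[Prop.~2.2]{weig}, so you may treat it as a black box rather than an obstacle to resolve via Koszulity.
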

\begin{proof}
	We adapt the proof of Weigel \cite{weig} to the restricted world. 
	
	Let $A=u(\mf g).$ By the Poincaré-Birkhoff-Witt for $p$-restricted Lie algebras, the Hilbert series of $A$ is \[h_A(t):=\sum_{i\geq 0}\dim (A_i )t^i=\prod_{m\geq 1}\left(\frac{1-t^{mp}}{1-t^m}\right)^{\ell_m}\] where $\ell_m=\dim\mf g_m.$ By Fr\"oberg's identity for internal degrees (see \cite[Prop. 2.2]{weig}), one has $\chi_A(-t)h_A(t)=1$ in $\Z[\![t]\!],$ and for $p=2$ we deduce that \[\prod_{m\geq 1}(1+t^m)^{\ell_m}=\prod_{i=1}^n(1-\lambda_i t)^{-1}\quad \text{in }\Z[\![t]\!].\]
	
	By taking $-\log$ from both sides, 
	\[-\sum_{m\geq 1}\ell_m \log(1+t^m)=\sum_{i=1}^n\log(1-\lambda_it)\]
	and then, using the definition of the logarithm of a power series 
	$\log(1-y)=\sum_{j\geq 1}y^j/j$ for $y\in t\mathbb C[\![t]\!],$ 
	\[\sum_{m\geq 1}\sum_{j\geq 1}\frac{\ell_m}{j}(-1)^{j+1}t^{mj}=\sum_{i=1}^n\sum_{m\geq 1}\frac{(\lambda_i t)^m}{m}.\]
	
	By rewriting the left-hand side as \[\sum_{m\geq 1}\sum_{j\vert m }\ell_{m/j}\frac{(-1)^{j+1}}{j}\ t^m,\] we compare the coefficients of $t^m$ and get, for all $m\geq 1,$ \begin{equation}\label{eq:formula}
		\sum_{j\vert m}\frac{\ell_{m/j}}{m/j}(-1)^{j+1}=\sum_{i=1}^{n}{\lambda_i}^m.
	\end{equation}
	
	The result thus follows from Equation \ref{eq:moebius} and the generalized M\"obius inversion formula. Indeed, if $\alpha(m)=(-1)^{m+1},$ and $\beta(m)=\ell_m/m,$ then the left-hand side of (\ref{eq:formula}) is the Dirichlet convolution $(\alpha\star \beta )(m).$ By (\ref{eq:moebius}), $\mu_2$ is the Dirichlet inverse of $\alpha,$ i.e., $\mu_2\star \alpha\star \beta=\beta,$ and hence \[\ell_m=\frac{1}{m}\sum_{i=1}^n\sum_{d\vert m}\mu_2(m/d)\lambda_i^d=\sum_{i=1}^k M_{2,m}(\lambda_i).\qedhere\]
\end{proof}

For the standard free restricted Lie algebra $\mf f$ on $n$ generators, we recover the restricted Witt's formula \cite[Cor. 2.1]{restrwitt}: $\mf f$ has a single eigenvalue, $\lambda_1=n,$ whence \[\dim(\mf f_m)=\frac{1}{m}\sum_{d\vert m}\mu_2(m/d)n^m.\]

\section{Graphs}
In this section we present the two notions of graphs we need to define T-RAAG and ERA Lie algebras. 
\subsection{Mixed graphs}\label{sec:mixed}
A \textbf{mixed graph} $\Gamma=(V,E,D,o,t)$ consists of a finite non-empty set $V$ of vertices, a set $E$ of $2$-element subsets of $V,$ a subset $D\subset E,$ and two maps $o,t:D\to V,$ respectively called the origin map and the terminus map, such that $o(e)\neq t(e)$ for all $e\in D.$ 
If $e=\{v,w\}\in E,$ then we denote $e$ by $\e {vw}$ if $e\notin D,$ and by $\vec{vw}$ if $e\in D$ and $v=o(e),$ $w=t(e).$ The elements of $D$ are the \textbf{directed edges} of $\Gamma.$ 

In the geometric realization of the graph $\Gamma$ we will denote the edges in $E$ in the following way, depending on their nature:
\[\overline{vw}\in E\setminus D:\quad\begin{tikzpicture}
	\node[fill=black!100,circle,scale=0.3,draw, "$v$"] at (0,0) {};
	\node[fill=black!100,circle,scale=0.3,draw, "$w$"] at (1,0) {};
	
	\draw[thick] (0,0) -- (1,0);
\end{tikzpicture}
\hspace{2cm}\vec{vw}\in D:\quad 
\begin{tikzpicture}
	\node[fill=black!100,circle,scale=0.3,draw, "$v$"] at (0,0) {};
	\node[fill=black!100,circle,scale=0.3,draw, "$w$"] at (1,0) {};
	
	\draw[thick,mid arrow] (0,0) -- (1,0);
\end{tikzpicture}.\]
The pair $(V,E)$ is the underlying simple graph of $\Gamma,$ and is denoted by $\bar \Gamma.$ By abuse of language, we call $\Gamma$ a simple graph whenever $D=\emptyset,$ and we identify $\Gamma$ and $\bar\Gamma.$

A vertex $v\in V$ of $\Gamma$ is said to be \textbf{central} if it is adjacent to all the other vertices, i.e., $\{v,w\}\in E,$ for every $w\in V\setminus\{v\}.$

The mixed graph $\Gamma$ is said to be \textbf{special} if for all $e\in D$ and all $e'\in E,$ if $t(e)\in e',$ then $e'\in D$ with $t(e')=t(e).$ 
Consequently, the set of vertices of $\Gamma$ can be partitioned into the two subsets of positive vertices $V_+$ and of negative vertices $V_-,$ where all the termini of edges in $D$ lie in $V_-.$ If the special graph $\Gamma$ has no isolated vertices, then there is a unique choice for this partition. 

Suppose that $\Gamma$ is a special mixed graph with a given partition $V=V_+\cup V_-$; the assignment $\theta(v)=1$ if $v\in V_-$ and $\theta(v)=0$ if $v\in V_+$ is called a \textbf{signature} of $\Gamma.$ We define the \textbf{cone} $\nabla_\theta(\Gamma)=(V',E',D',o',t')$ as the mixed graph containing $\Gamma$ with \[V'=V\cup \{z\},\quad E'=E\cup\set{\{v,z\}}{v\in V},\] and \[D'=D\cup\set{\{z,v\}}{v\in V, \theta(v)=1},\quad (o',t')\vert_{D'\setminus D}:\{z,v\}\mapsto(z,v).\]
In particular, the tip $z$ of the cone is a central vertex.

\begin{defin}[see \cite{sb_cq}]
	A mixed graph $\Gamma$ is a \textbf{mixed Droms graph} if $\Gamma$ is a special mixed graph without an induced subgraph of the form \[\Lambda_s=	\begin{tikzpicture}
		\node[fill=black!100,circle,scale=0.3,draw, "$u$"] at (0,0) {};
		\node[fill=black!100,circle,scale=0.3,draw, "$z$"] at (1,1) {};
		\node[fill=black!100,circle,scale=0.3,draw, "$v$"] at (2,0) {};
		\draw[thick,mid arrow] (0,0) -- (1,1);
		\draw[thick,mid arrow] (2,0) -- (1,1);
	\end{tikzpicture} \] and its underlying simple graph $(V,E)$ does not contain any induced subgraph that is either a square graph $C_4$ or a path on four vertices $P_4.$ 
\end{defin}
Equivalently, by \cite[Prop. 2.14]{sb_qw}, $\Gamma$ is a mixed Droms graph if and only if it belongs to the smallest class of special mixed graphs containing the singleton mixed graph with any signature, and that is closed under the coning construction and the disjoint union. By a simple Droms graph, we will mean a mixed Droms graph with no directed edges.

Any mixed graph $\Lambda=(V_1,E_1,D_1,o\vert_{D_1},t\vert_{D_1})$ -- where $V_1\subseteq V,$ $E_1=E\cap \mathcal P(V_1),$ and $D_1=D\cap E_1$ -- is called an induced subgraph of the mixed graph $\Gamma=(V,E,D,o,t),$ and we also say that $\Lambda$ is spanned by $V_1.$ If $n\geq 0$ is any integer, an $n$-clique of $\Gamma$ is a set of $n$ distinct pairwise adjacent vertices. A clique of a special mixed graph has at most a single negative vertex, i.e., the subgraph it spans (also called a clique of $\Gamma$) is either a simplicial graph, or has $n-1$ directed edges with common terminus. If $\bar\Gamma$ is a simple graph, then we denote by $C_{\bar\Gamma}(t)$ the \textbf{clique polynomial} of $\bar\Gamma,$ i.e., 
\[C_{\bar\Gamma}(t)=\sum_{n\geq 0}c_nt^n,\] where $c_n$ is the number of $n$-cliques of $\bar\Gamma.$

\subsection{Labelled graphs}

If $\Gamma=(V,E)$ is a simple graph (or equivalently, a mixed graph with $D=\emptyset$), a function $\theta:V\to \F_2=\{0,1\}$ is called a \textbf{$2$-labelling} of $\Gamma.$
In the simplicial realization of $\Gamma$ vertices labelled with the unit $1\in \F_2$ are denoted by a filled circle $\bullet,$ while the others by an empty circle $\circ.$ 

\begin{defin}\label{def:labelDroms}
	A graph $\Gamma$ with a $2$-labelling $\theta:V\to \F_2$ is a \textbf{Droms labelled graph} if it is a simple Droms graph, and it does not contain any of the following labelled graphs as induced subgraphs: 
	\[\begin{tikzpicture}
		\node[fill=black!100,circle,scale=0.3,draw] at (0,0) {};
		\node[circle,scale=0.3,draw] at (1,0) {};
		\node[fill=black!100,circle,scale=0.3,draw] at (2,0) {};
		\draw[thick] (0,0) -- (0.945,0) ;
		\draw[thick] (1.055,0) -- (2,0);
	\end{tikzpicture},\qquad \begin{tikzpicture}
		\node[fill=black!100,circle,scale=0.3,draw] at (0,0) {};
		\node[circle,scale=0.3,draw] at (1,0) {};
		\node[circle,scale=0.3,draw] at (2,0) {};
		\draw[thick] (0,0) -- (0.945,0);
		\draw[thick] (1.055,0) -- (1.945,0);
	\end{tikzpicture}.
	\]
\end{defin}

\begin{lem}\label{lem:labelDroms}
	Let $\Gamma$ be a Droms labelled graph, and let $\Lambda$ be a connected induced subgraph. Then, either $\theta\vert_{V(\Lambda)}\equiv 0,$ or there exists a central vertex $v$ of $\Lambda$ with $\theta(v)=1.$
\end{lem}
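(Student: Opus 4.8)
The plan is to argue by strong induction on $\abs{V(\Lambda)}$, after the harmless reduction to the case $\Lambda=\Gamma$: every connected induced subgraph of a Droms $2$-labelled graph is again a connected Droms $2$-labelled graph, since the forbidden configurations of Definition~\ref{def:labelDroms} — the unlabelled $C_4$ and $P_4$, and the two labelled three-vertex paths whose middle vertex has label $0$ and at least one of whose ends has label $1$ — are all inherited by induced subgraphs. The base case $\abs{V(\Lambda)}=1$ is immediate: the single vertex is vacuously central, so either it carries label $1$, or $\theta\equiv 0$.

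The structural input I would use is that a connected simple Droms graph with at least two vertices has a central vertex. One way to see this: such a graph is $P_4$-free, hence decomposes as a join $\Lambda_1\ast\Lambda_2$ of two non-empty induced subgraphs; if both $\Lambda_1$ and $\Lambda_2$ contained a non-adjacent pair of vertices, these four vertices would span an induced $C_4$, which is excluded, so one of the two factors is a complete graph, and any of its vertices is adjacent to all the remaining vertices of $\Lambda$. (Alternatively one may invoke the characterization of Droms graphs via iterated cones and disjoint unions recalled in Section~\ref{sec:mixed}.)

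For the inductive step, assume $\abs{V(\Lambda)}\ge 2$. If some central vertex of $\Lambda$ has label $1$ we are done, so suppose every central vertex of $\Lambda$ has label $0$, fix one such vertex $z$, and set $\Lambda':=\Lambda\setminus\{z\}$. Since $\theta(z)=0$ it suffices to prove that $\theta$ vanishes on $V(\Lambda')$. If $\Lambda'$ is connected, the inductive hypothesis yields either $\theta\vert_{V(\Lambda')}\equiv 0$ — as desired — or a central vertex $v$ of $\Lambda'$ with $\theta(v)=1$; but then $v$ is adjacent to every other vertex of $\Lambda'$ and to $z$, hence central in $\Lambda$ with label $1$, contradicting our hypothesis. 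If instead $\Lambda'$ is disconnected, suppose toward a contradiction that $\theta(u)=1$ for some $u$ in a component $C$ of $\Lambda'$, and pick a vertex $u'$ in a different component; then $u\not\sim u'$ while $u\sim z\sim u'$ because $z$ is central in $\Lambda$, so $\{u,z,u'\}$ spans an induced path of length two whose middle vertex $z$ has label $0$ and whose end $u$ has label $1$, which — whatever the label of $u'$ — is one of the configurations forbidden by Definition~\ref{def:labelDroms}. Hence no vertex of $\Lambda'$ has label $1$, and the induction is complete. The only delicate point is the existence of a central vertex in a connected Droms graph; everything else is a direct unwinding of the forbidden-subgraph definition.
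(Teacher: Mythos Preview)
Your proof is correct, but it is more elaborate than the paper's. The paper argues directly, without induction: pick any central vertex $v$ of $\Lambda$ (which exists since $\Lambda$ is a connected Droms graph), and suppose every central vertex has label $0$. Then for any non-central vertex $u$ with $\theta(u)=1$, there is by definition some $w\neq u,v$ not adjacent to $u$, and since $v$ is central the induced subgraph on $\{u,v,w\}$ is precisely one of the two forbidden labelled paths of Definition~\ref{def:labelDroms}; hence no such $u$ exists and $\theta\equiv 0$ on $\Lambda$.

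Your disconnected-$\Lambda'$ case is exactly this argument, with $z$ playing the role of $v$ and $u'$ the role of $w$. But the same reasoning works verbatim when $\Lambda'=\Lambda\setminus\{z\}$ is connected: a vertex $u$ with $\theta(u)=1$ that is not central in $\Lambda$ still has some non-neighbour $w$ in $\Lambda$, and $\{u,z,w\}$ is again a forbidden path. So the induction and the connected/disconnected case split can be dropped entirely. What your inductive packaging does buy you is an explicit reduction to smaller graphs, which some readers may find reassuring; the cost is that the one-line heart of the argument is somewhat obscured.
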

\begin{proof}
	Since $\Lambda$ is a connected Droms graph, it contains a central vertex $v.$ 
	Suppose that all the central vertices of $\Lambda$ have value $0.$ If $u$ is not a central vertex, then either $\theta(u)=0,$ or there exists a vertex $w\neq v,u$ such that the induced subgraph spanned by $\{v,u,w\}$ is one of the path graphs of Definition \ref{def:labelDroms}, which is impossible since $\Gamma$ is a Droms labelled graph.
\end{proof}

In particular, the class of Droms labelled graphs is the smallest class of $2$-labelled graphs containing arbitrarily $2$-labelled discrete vertices, and that is closed under taking \begin{enumerate}
	\item disjoint unions, and
	\item cones, with the exception that the tip can only be labelled by $0$ if the basis of the cone has no vertex labelled by $1.$
\end{enumerate}

\section{T-RAAG Lie algebras}
We first consider restricted Lie algebras associated to mixed graphs.
\begin{defin}
	Let $\Gamma=(V,E,D,o,t)$ be a mixed graph. 
	We define the T-RAAG Lie $k$-algebra associated to $\Gamma$ as the restricted Lie algebra over the field $k$		
	\[\mf a(\Gamma)=\pres{V}{\begin{matrix}
			[v,w]:& \e{vw}\in E\setminus D\\
			[v,w]+v\2:&\vec{vw}\in D
	\end{matrix}}.\]
	
\end{defin}
Observe that, due to the identity (\ref{brackets2}), the T-RAAG Lie algebra $\mf a(\Gamma)$ admits a presentation that does not explicitly involve the Lie brackets:
\begin{equation}\label{eq:2pres}
	\pres{V}{\begin{matrix}
			(v+w)\2+v\2+w\2:& \e{vw}\in E\\ 
			(v+w)\2+w\2:& \vec{vw}\in D
\end{matrix}}.\end{equation}

We identify the vertices of $\Gamma$ with the corresponding elements of $\mf a(\Gamma),$ which are henceforth called the \textbf{canonical generators} of the T-RAAG Lie algebra.

\begin{rem}
	When $D$ is empty, i.e., when the graph is simple, we also call the restricted Lie algebra $\mf a(\Gamma)$ the \textbf{RAAG Lie algebra} associated to $\Gamma.$ The ordinary subalgebra of $\mf a(\Gamma)$ generated by its elements of degree $1$ is called the ordinary RAAG Lie algebra, and is denoted by $\li_\Gamma.$ The restrictification of $\li_\Gamma$ is precisely $\mf a(\Gamma)$; in particular, the universal enveloping algebra of $\li_\Gamma$ is isomorphic to the restricted enveloping algebra $u(\mf a(\Gamma))$ in a natural way.
	
	If $D=\emptyset,$ one can similarly define a $p$-restricted Lie algebra $\mf a(\Gamma,p)$ over a field $k$ of characteristic $p>0.$ For $k=\F_p,$ it is the associated graded restricted Lie algebra of the pro-$p$ completion $\hat A$ of the right-angled Artin group $A(\Gamma)$ with respect to the Jennings-Zassenhaus filtration. Moreover, the restricted $p$-enveloping algebra of $\mf a(\Gamma,p)$ is the associated graded algebra of the $\omega$-adic filtration of the complete group algebra $\F_p[\![\hat A]\!]=\invlim_{ U\lhd_o \hat A}\hat A/U,$ having $\omega$ as its augmentation ideal. Nevertheless, the $p$-analogue of the presentation (\ref{eq:2pres}) does not produce $\mf a(\Gamma,p)$ if the graph contains some edges. 
\end{rem}

\begin{question}
	Consider a group $G$ and the augmentation ideal $\omega$ of the group ring $kG.$ We denote by $\gr kG$ the associated graded algebra of the $\omega$-adic filtration of $kG.$ 
	Moreover, if $\gr ^\gamma G$ is the Lie ring associated to the lower central series of $G,$ put $\gr G=\gr ^\gamma G\otimes k.$ 
	
	Let $\Gamma$ be a mixed graph, and let $A(\Gamma)$ be its associated twisted right-angled Artin group.
	
	(Q1) Is $u(\mf a(\Gamma))$ isomorphic to $\gr kA(\Gamma)$?
	
	(Q2) Is $\gr A(\Gamma)$ isomorphic to $\mf a(\Gamma)$?
	
	We believe that this question admits a positive answer, and we will address it in a forthcoming work. 
	
\end{question}

If $\Gamma$ is a special mixed graph with a partition $V=V_+\cup V_-,$ the signature $\theta:V\to k$ extends to a restricted Lie homomorphism $\theta:\mf a(\Gamma)\to k,$ where $k$ is seen as an elementary abelian, standard, restricted Lie algebra, i.e., $k\simeq \mf k/\mf k\2.$ 
For all edges $e=\{v,w\}\in E,$ one may rewrite the associated relation of $\mf a(\Gamma)$ as $[v,w]=\theta(v)w\2+\theta(w)v\2,$ which provides an equivalent and compact presentation for the T-RAAG of a special mixed graph $\Gamma$:
\[\mf a(\Gamma)=\pres{V}{[v,w]+\theta(v)w\2+\theta(w)v\2:\ \{v,w\}\in E}.\]

\begin{prop}\label{prop:ind}
	If $\Lambda$ is an induced subgraph of a special mixed graph $\Gamma,$ then $\mf a(\Lambda)$ naturally embeds into $\mf a(\Gamma).$ 
\end{prop}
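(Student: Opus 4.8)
The plan is to build the embedding $\mf a(\Lambda)\hookrightarrow\mf a(\Gamma)$ directly from the presentations, using the generalized Witt formula from Section~\ref{sec:witt} (or, equivalently, a PBW/Hilbert-series count) to check that no collapse occurs in degree $2$, and then propagate this to all degrees. Write $\Lambda$ as the subgraph of $\Gamma$ spanned by a subset $W\subseteq V$, with $E_\Lambda=E\cap\mathcal P(W)$ and $D_\Lambda=D\cap E_\Lambda$. Since the defining relations of $\mf a(\Lambda)$ are exactly the relations of $\mf a(\Gamma)$ that involve only generators in $W$, there is a canonical restricted homomorphism $\iota:\mf a(\Lambda)\to\mf a(\Gamma)$ sending each vertex of $\Lambda$ to the corresponding canonical generator of $\mf a(\Gamma)$; the content of the proposition is that $\iota$ is injective.

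First I would reduce to a statement about the restricted enveloping algebras: by the Milnor--Moore theorem (quoted in \S\ref{sec:restr}), $\mf a(\Lambda)$ and $\mf a(\Gamma)$ are the spaces of primitive elements of the Hopf algebras $u(\mf a(\Lambda))$ and $u(\mf a(\Gamma))$, so $\iota$ is injective as soon as the induced map $u(\iota):u(\mf a(\Lambda))\to u(\mf a(\Gamma))$ is injective. Using presentation~(\ref{eq:2pres}) and the fact that $\Gamma$ is \emph{special}, the relations are of the ``homogeneous quadratic monomial-like'' shape $[v,w]+\theta(v)w\2+\theta(w)v\2$, and in $u(\mf a(\Gamma))$ (after substituting $x\2=x^2$) they become $vw+wv+\theta(v)w^2+\theta(w)v^2 = 0$ for $\{v,w\}\in E$, i.e. $(v+\theta(w)w)(w+\theta(v)v)$-type quadratic relations — the key point is that $u(\mf a(\Gamma))$ is a quotient of the free associative algebra by a quadratic ideal generated by relations each supported on a single edge. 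The natural candidate for a basis is a PBW-type basis built from an ordered alphabet $V$ together with the edge relations; the specialness hypothesis guarantees that straightening a product of two adjacent generators $wv$ with $w>v$ never introduces a generator outside $\{v,w\}$, so the rewriting system is ``local'' to each edge. I would then verify the diamond/confluence condition on overlaps $uvw$ with $\{u,v\},\{v,w\}\in E$ — here the Jacobi-type identity $[v\2,w]=[v,[v,w]]$ and the specialness constraint (at most one negative vertex per clique) do the bookkeeping — to conclude that $u(\mf a(\Gamma))$ has a monomial basis indexed by ``$\Gamma$-reduced words'', and likewise $u(\mf a(\Lambda))$ by ``$\Lambda$-reduced words''. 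Since a word in the letters of $W$ is $\Lambda$-reduced if and only if it is $\Gamma$-reduced, $u(\iota)$ maps basis to basis and is therefore injective.

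The main obstacle is establishing confluence of the rewriting system / correctness of the PBW basis for $u(\mf a(\Gamma))$ in characteristic two, precisely because of the ``squares'' phenomenon emphasized in the introduction: an overlap of two edge-relations at a common vertex $v$ produces terms involving $v\2=v^2$, and one must check these cancel using identities (2)--(3) of \S\ref{sec:restr} rather than naively. The specialness hypothesis is exactly what makes this work: it forces every $2$-clique to have at most one negative vertex, so at most one of the two overlapping relations contributes a square term, and the ambiguity resolves. An alternative, cleaner route that avoids hand-checking confluence is a Hilbert-series argument: compute $h_{u(\mf a(\Gamma))}(t)$ via the generalized Witt formula of \S\ref{sec:witt} (the eigenvalues of a special-graph T-RAAG Lie algebra can be read off the clique structure, analogously to the classical RAAG computation), do the same for $h_{u(\mf a(\Lambda))}(t)$, and observe that $u(\mf a(\Gamma))$ is free as a right module over $u(\mf a(\Lambda))$ with the expected Hilbert series, so $u(\iota)$ is split injective; injectivity of $\iota$ on primitives follows. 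I would most likely present the rewriting-system version as the main argument, remarking that specialness is used exactly once, to kill the square terms in overlap resolutions.
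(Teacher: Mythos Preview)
Your rewriting-system argument has a genuine gap that appears already in the undirected case and has nothing to do with the square terms. The edge relations of $u(\mf a(\Gamma))$ do \emph{not} in general form a confluent rewriting system (equivalently, a quadratic Gr\"obner basis). For $\Gamma=P_3$ with vertices $u,v,w$, edges $\{u,v\},\{v,w\}$, and order $u<v<w$, the overlap $wvu$ reduces to the distinct irreducible words $vwu$ and $wuv$, since $\{u,w\}\notin E$. One can sometimes repair this by reordering (here, put $v$ first), but for $\Gamma=C_5$ no linear order works: every vertex would have to be a local extremum among its two cycle-neighbours, impossible in an odd cycle. Yet $C_5$ is a special mixed graph and the proposition applies to it. So the place where you expect specialness to enter---killing square terms in overlap resolutions---is not where the argument breaks; it breaks earlier, on pure commutators. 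Your alternative Hilbert-series route via \S\ref{sec:witt} is circular in the paper's development: the eigenvalues are read off the cohomology ring (Corollary~\ref{cor:TRAAGcoho}), which rests on Koszulness (Corollary~\ref{cor:TRAAGkosz}), whose proof uses the very HNN decomposition obtained in the course of proving this proposition.

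The paper instead argues by induction, removing one vertex $v\in V\setminus V(\Lambda)$ at a time. If $v\in V_+$, the assignment $v\mapsto 0$, $x\mapsto x$ for $x\neq v$, extends to a retraction $\mf a(\Gamma)\to\mf a(\Gamma_v)$, giving injectivity of $\mf a(\Gamma_v)\to\mf a(\Gamma)$ for free. If $v\in V_-$, no such retraction exists (cf.\ Lemma~\ref{lem:twistedRetract}); instead one recognises $\mf a(\Gamma)$ as the HNN-extension $\hnn_\phi(\mf a(\Gamma_v),v)$ for the degree-$1$ derivation $\phi(w)=w\2$ on the subalgebra generated by the link of $v$, and invokes \cite{hnnLS} for the embedding of the base. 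This HNN step for negative vertices is the idea missing from your proposal.
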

\begin{proof}
	First of all notice that the natural map $\mf a(\Lambda)\to \mf a(\Gamma)$ induced by the inclusion $\Lambda\to \Gamma$ is a restricted homomorphism. 
	
	We argue by induction on the number of vertices of $\Gamma=(V,E,D,o,t).$ It is enough to suppose that it is connected and that $\Lambda\neq \Gamma.$ The case when $\Gamma$ consists of a single vertex is obvious, so suppose that $\Gamma$ has more than one vertex. If $v$ is a vertex in $\Gamma$ but not in $\Lambda,$ let ${\Gamma_v}$ be the induced subgraph of $\Gamma$ with vertex set $V\setminus\{v\}.$ By induction, $\mf a(\Lambda)$ naturally embeds into $\mf a({\Gamma_v}).$ 
	
	If $v\in V_+,$ the map $\mf a(\Gamma)\to \mf a({\Gamma_v})$ defined on generators by $v\mapsto 0$ and $x\in V({\Gamma_v}) \mapsto x$ is a restricted Lie map, and is a left inverse to the natural map $\mf a({\Gamma_v})\to\mf a(\Gamma)$ constructed above, which is thus injective.
	
	On the other hand, if $v\in V_-,$ and $W$ is the set of vertices that are adjacent to $v,$ one can split $\mf a(\Gamma)$ into the HNN-extension of $\mf a({\Gamma_v})$ with respect to the derivation $\phi:\mf b\to \mf a({\Gamma_v})$ defined by sending each canonical generator $w$ to $w\2,$ where $\mf b=\mf a(\Pi)$ is the subalgebra of $\mf a({\Gamma_v})$ generated by $W,$ and $\Pi$ is the subgraph induced by $W.$ It follows that $\mf a({\Gamma_v})$ embeds into $\mf a(\Gamma)$ by \cite{hnnLS}.
\end{proof}

\begin{cor}\label{cor:TRAAGkosz}
	If $\Gamma$ is a special mixed graph, then $\mf a(\Gamma)$ is a Koszul restricted Lie algebra. 
\end{cor}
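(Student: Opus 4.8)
The plan is an induction on the number of vertices of $\Gamma$, built on the HNN-decomposition supplied by the proof of Proposition~\ref{prop:ind} together with the long exact cohomology sequence of Theorem~\ref{thm:hnn}.

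First I would settle the degenerate cases. A single-vertex special mixed graph carries no edge, so $\mf a(\Gamma)=\mf k$ is the free restricted Lie algebra on one generator, whose cohomology ring $k[T]/(T^2)$ (Example~\ref{ex:1dim}) lies on the diagonal; hence $\mf a(\Gamma)$ is Koszul. If $\Gamma=\Gamma_1\sqcup\Gamma_2$ is disconnected with both parts non-empty, then $\mf a(\Gamma)=\mf a(\Gamma_1)\amalg\mf a(\Gamma_2)$, so $u(\mf a(\Gamma))=u(\mf a(\Gamma_1))\amalg u(\mf a(\Gamma_2))$ is a $k$-coproduct (free product) of augmented algebras; since $\tor^{A\amalg B}_n(k,k)\cong\tor^A_n(k,k)\oplus\tor^B_n(k,k)$ for $n\geq1$, compatibly with the internal grading, a free product of Koszul algebras is Koszul, and the induction closes. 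Thus we may assume $\Gamma$ connected with at least two vertices.

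Next I would strip off a suitable vertex $v$. If $\Gamma$ has a directed edge, take $v$ to be its terminus; by specialness $v$ is a sink, and the proof of Proposition~\ref{prop:ind} realises $\mf a(\Gamma)$ as the HNN-extension $\hnn_\phi(\mf a(\Gamma_v),v)$, where $\Gamma_v:=\Gamma\setminus\{v\}$, the subalgebra $\mf h$ is generated by the set $W$ of neighbours of $v$ and is canonically isomorphic to $\mf a(\Pi)$ (with $\Pi$ the subgraph induced on $W$) by Proposition~\ref{prop:ind}, and $\phi\colon\mf h\to\mf a(\Gamma_v)$ is the degree-$1$ derivation $w\mapsto w\2$. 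If $\Gamma$ has no directed edge, pick any vertex $v$: all edges at $v$ are plain, the relations at $v$ read $[v,w]=0$ for $w\in W$, and one checks directly that $\mf a(\Gamma)\cong\hnn_0(\mf a(\Gamma_v),v)$ with the same $\mf h\cong\mf a(\Pi)$ and zero derivation. In either case $\Gamma_v$ and $\Pi$ are special mixed graphs on strictly fewer vertices than $\Gamma$ (induced subgraphs of special mixed graphs are again special), so $\mf a(\Gamma_v)$ and $\mf a(\Pi)$ are Koszul by the inductive hypothesis.

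Finally I would invoke Theorem~\ref{thm:hnn} with $M=k$ and $n=1$, which gives the exact sequence
\[\cdots\to H^{i-1,j-1}(\mf a(\Pi))\to H^{ij}(\mf a(\Gamma))\to H^{ij}(\mf a(\Gamma_v))\to H^{i,j-1}(\mf a(\Pi))\to\cdots\]
Fix $i\neq j$. Koszulity of $\mf a(\Gamma_v)$ forces $H^{ij}(\mf a(\Gamma_v))=0$, so exactness makes $H^{ij}(\mf a(\Gamma))$ a quotient of $H^{i-1,j-1}(\mf a(\Pi))$; and since $i-1\neq j-1$, Koszulity of $\mf a(\Pi)$ forces $H^{i-1,j-1}(\mf a(\Pi))=0$. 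Hence $H^{ij}(\mf a(\Gamma))=0$ for all $i\neq j$, that is, $\mf a(\Gamma)$ is Koszul. The only step needing genuine care is the reduction: verifying that a connected special mixed graph on $\geq2$ vertices always admits a vertex $v$ for which $\mf a(\Gamma)$ splits as an HNN-extension over $\mf a(\Gamma\setminus\{v\})$ with the associated subalgebra $\mf h$ itself the graph Lie algebra of a smaller special mixed graph; but this is exactly what the ``sink vertex versus simple graph'' dichotomy and the identification $\mf h=\mf a(\Pi)$ in the proof of Proposition~\ref{prop:ind} provide, so no real obstacle remains and the cohomological induction is automatic.
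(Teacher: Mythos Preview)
Your proof is correct and follows essentially the same strategy as the paper: induction on $|V|$, HNN-decomposition at a negative vertex as in the proof of Proposition~\ref{prop:ind}, and the long exact sequence of Theorem~\ref{thm:hnn} to propagate Koszulity. The only cosmetic difference is that the paper disposes of the case $D=\emptyset$ by citing Fr\"oberg's result on RAAG Lie algebras, whereas you treat that case uniformly via an HNN-extension with zero derivation (and you also spell out the disconnected/free-product reduction and the diagonal argument for $H^{ij}$, which the paper compresses into a citation of \cite[Lem.~2.2]{sb_kosz}).
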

\begin{proof}
	We argue by induction on the number $n$ of vertices of $\Gamma.$ If $n=1,$ then  $u(\mf a(\Gamma))=k[T]$ is clearly a Koszul algebra, and hence so is $\mf a(\Gamma)$ by definition. 
	Suppose now that $n>1.$ If $V_-=\emptyset,$ then $\mf a(\Gamma)$ is Koszul by Fr\"oberg \cite{frob} (see also \cite{sb_kosz} for a less combinatorial proof).
	On the other hand, if $v\in V_-\neq \emptyset,$ then one recovers the same HNN-decomposition of $\mf a(\Gamma)$ as in Proposition \ref{prop:ind}, in terms of subalgebras $\mf a(\Gamma_v)$ and $\mf b$ which are Koszul by induction. By Theorem \ref{thm:hnn} we conclude (see \cite[Lem. 2.2]{sb_kosz}).
\end{proof}

\begin{cor}\label{cor:TRAAGcoho}
	If $\Gamma$ is a special mixed graph, then the cohomology ring of the T-RAAG Lie algebra of $\mf a(\Gamma)$ is the commutative connected $k$-algebra presented by 
	\[
	H^\bullet(\mf a(\Gamma),k)=\pres{\xi_i}{\begin{matrix}
			\xi_{i}\xi_{j}:&\{v_i,v_j\}\notin E\\
			\xi_{p}^2+\xi_p\xi_q:& (v_p,v_q)=(o(e),t(e)),\ e\in D
	\end{matrix}}_\text{alg},
	\]
	where $\xi_i$ denotes the dual basis element of the canonical generator $v_i\in V.$
	
	In particular, the cohomological dimension 
	$\cd\mf a(\Gamma)$ equals the clique number of $\Gamma,$ that is the maximal integer $n$ such that $\Gamma$ contains an $n$-clique.
\end{cor}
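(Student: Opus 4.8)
The plan is to pass to the restricted enveloping algebra, use Koszulity to turn the cohomology ring into a quadratic dual, compute that dual by hand, and read off the cohomological dimension from an induction along the HNN decomposition.

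By Corollary~\ref{cor:TRAAGkosz}, $\mf a(\Gamma)$ is Koszul, so $u(\mf a(\Gamma))$ is a Koszul, hence quadratic, algebra, and the cup-product algebra $H^{\bu}(\mf a(\Gamma),k)=\ext^{\bu}_{u(\mf a(\Gamma))}(k,k)$ is canonically isomorphic, as a bigraded algebra, to the quadratic dual $u(\mf a(\Gamma))^{!}$ (cf.~\cite{weig}). So the task reduces to computing $u(\mf a(\Gamma))^{!}$. From the defining presentation of $\mf a(\Gamma)$ and the description of the restricted envelope as $T(V)$ modulo the relations (a) and (b) of \S\ref{sec:restr}, one obtains $u(\mf a(\Gamma))=T(V)/(R)$ with
\[
R=\spn\set{v\otimes w+w\otimes v}{\e{vw}\in E\setminus D}\ \oplus\ \spn\set{v\otimes w+w\otimes v+v\otimes v}{\vec{vw}\in D},
\]
and a dimension count confirms that $R$ is the full space of quadratic relations: by the restricted Poincar\'e--Birkhoff--Witt theorem (\S\ref{sec:witt}), $\dim u(\mf a(\Gamma))_{2}=\dim\mf a(\Gamma)_{2}+\binom{|V|}{2}=|V|^{2}-|E|=\dim(V\otimes V)-\dim R$.

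The next step is to dualize. Write $\xi_{i}$ for the basis of $V^{\ast}$ dual to the canonical generators $v_{i}$, and use the pairing $\langle\xi_{i}\otimes\xi_{j},\,v_{k}\otimes v_{l}\rangle=\delta_{ik}\delta_{jl}$. Since every element of $R$ is a symmetric tensor, $\xi_{i}\xi_{j}+\xi_{j}\xi_{i}\in R^{\perp}$ for all $i,j$, so $u(\mf a(\Gamma))^{!}$ is commutative; computing the remaining constraints cut out by $R^{\perp}$ then gives the relations $\xi_{i}\xi_{j}=0$ for $\{v_{i},v_{j}\}\notin E$, the relation $\xi_{i}^{2}=0$ for every vertex $v_{i}$ that is not an origin of a directed edge, and, for every origin $v_{p}$, the relation $\xi_{p}^{2}+\sum_{q}\xi_{p}\xi_{q}=0$ obtained by pairing against the relations $v_{p}\otimes v_{q}+v_{q}\otimes v_{p}+v_{p}\otimes v_{p}$ of the directed edges out of $v_{p}$ (the sum running over the out-neighbours $v_{q}$ of $v_{p}$, so that this becomes $\xi_{p}^{2}+\xi_{p}\xi_{q}$ exactly when $v_{p}$ has a unique out-neighbour). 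Here the hypothesis that $\Gamma$ is \emph{special} enters essentially: it guarantees $V_{+}\cap V_{-}=\emptyset$, so no vertex is simultaneously an origin and a terminus, which is precisely what makes this list of relations consistent and of the correct size $\dim R^{\perp}=|V|^{2}-|E|$. Assembling them gives the displayed presentation of $H^{\bu}(\mf a(\Gamma),k)$.

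For the cohomological dimension, Koszulity gives $H^{n}(\mf a(\Gamma))=H^{nn}(\mf a(\Gamma))=(u(\mf a(\Gamma))^{!})_{n}$, so $\cd\mf a(\Gamma)$ is the top degree in which $u(\mf a(\Gamma))^{!}$ is nonzero, and I would identify it with the clique number by induction on $|V|$ along the decomposition used in Corollary~\ref{cor:TRAAGkosz}. If $V_{-}=\emptyset$, then $u(\mf a(\Gamma))^{!}$ is the quotient of the exterior algebra $\Lambda(V^{\ast})$ by the monomials $\xi_{i}\xi_{j}$ with $\{v_{i},v_{j}\}\notin E$ (Fr\"oberg~\cite{frob}), whose top nonzero degree is the largest size of a clique of $\Gamma$. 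If $v\in V_{-}$, write $\mf a(\Gamma)=\hnn_{\phi}(\mf a(\Gamma_{v}),v)$ with $\mf h=\mf a(\Pi)$, $\Pi$ the subgraph induced by the neighbours of $v$; feeding this into the long exact sequence of Theorem~\ref{thm:hnn} and using Koszulity to kill the off-diagonal terms $H^{ij}$ ($i\ne j$) yields $\cd\mf a(\Gamma)\le\max\{\cd\mf a(\Gamma_{v}),\,\cd\mf a(\Pi)+1\}$, while the same sequence shows the top group is nonzero, so equality holds. Since a clique of $\Gamma$ either avoids $v$ (and is then a clique of $\Gamma_{v}$) or is $v$ together with a clique of $\Pi$, the clique number of $\Gamma$ equals $\max\{\omega(\Gamma_{v}),\,\omega(\Pi)+1\}$, and the induction closes. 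The delicate point I anticipate is the dualization: tracking the squares $\xi_{p}^{2}$ attached to origins, and especially a positive vertex incident to several directed edges, where the relation it contributes must be reassembled from its out-edges via the commutativity relations rather than read off one edge at a time; the cohomological-dimension induction is comparatively routine, needing only that one separate the case where a maximal clique contains the deleted negative vertex from the case where it does not.
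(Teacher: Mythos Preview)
Your approach to the presentation of $H^{\bu}(\mf a(\Gamma))$ is the same as the paper's: invoke Koszulity (Corollary~\ref{cor:TRAAGkosz}) to identify the cohomology with the quadratic dual $u(\mf a(\Gamma))^{!}$, then write down $R^{\perp}$. The paper compresses this into a single sentence, whereas you carry out the orthogonality calculation explicitly; in doing so you correctly observe that for an origin $v_{p}$ with several out-neighbours the element of $R^{\perp}$ is $\xi_{p}^{2}+\sum_{q}\xi_{p}\xi_{q}$ rather than one relation per directed edge, a point the paper's displayed presentation does not make visible (compare the graph $\vec\Sigma_{2}$ of Example~\ref{ex:nonrigid}). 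One small inaccuracy: speciality is not what makes the dualization go through---$R^{\perp}$ can be written down for any mixed graph---its real role is upstream, in Corollary~\ref{cor:TRAAGkosz}, to guarantee Koszulity in the first place.

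For the cohomological dimension you take a genuinely different route. The paper argues directly from the presentation: since $\xi_{i}\xi_{j}=0$ for non-edges and $\xi^{3}=\xi_{i}^{2}\xi_{j}=\xi_{i}\xi_{j}^{2}=0$ along each directed edge, every monomial of length exceeding the clique number vanishes, so the top degree is at most the clique number (nonvanishing at a maximal clique is implicit, e.g.\ via Proposition~\ref{prop:ind}). You instead rerun the HNN induction from Corollary~\ref{cor:TRAAGkosz}, feeding Koszulity into the long exact sequence of Theorem~\ref{thm:hnn} to obtain the short exact sequence $0\to H^{n-1,n-1}(\mf a(\Pi))\to H^{n,n}(\mf a(\Gamma))\to H^{n,n}(\mf a(\Gamma_{v}))\to 0$ on the diagonal, which yields $\cd\mf a(\Gamma)=\max\{\cd\mf a(\Gamma_{v}),\cd\mf a(\Pi)+1\}$ and matches the obvious recursion for the clique number. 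Both arguments are valid; the paper's is quicker and purely combinatorial in the dual algebra, while yours has the advantage of giving both inequalities cleanly without needing to argue separately that the clique monomial survives.
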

\begin{proof}
	Since $\mf a=\mf a(\Gamma)$ is a Koszul Lie algebra, its cohomology ring is isomorphic to the quotient of the tensor algebra over $\mf a_1^\ast=\hom_k(\mf a_1,k)$ by the ideal generated by the elements $\alpha\in (\mf a_1\otimes \mf a_1)^\ast$ such that $\alpha(v\otimes w-w\otimes v+\theta(v)w\otimes w+\theta(w)v\otimes v) =0$ for all $\{v,w\}\in E,$ where we have identified $\mf a_1^\ast\otimes \mf a_1^\ast$ with $(\mf a_1\otimes \mf a_1)^\ast.$ Let $(\xi_i)$ be the dual basis of $\mf a_1^\ast$ relative to $(v_i),$ i.e., $\xi_i(v_j)=\delta_{ij}.$
	The second assertion follows from the fact that $\xi_{i}\xi_{j},$ vanishes in $H^\bullet(\mf a)$ when $\{v_i,v_j\}\notin E,$ $i\neq j,$ and $\xi^3=\xi_i^2\xi_j=\xi_i\xi_j^2=0$ when $(v_i,v_j)\in (o,t)(D),$ and $\xi\in H^1(\mf a).$
\end{proof}

By Example \ref{ex:1dim}(2), the T-RAAG associated to a special mixed graph is torsion-free for it has finite cohomological dimension. On the other hand, if $\Gamma$ is not a special graph, then $\mf a(\Gamma)$ might have torsion.

\begin{exam}
	Consider a non-special complete graph $\Delta$ with an oriented cycle, e.g., 
    \begin{center}
        $V=\{v_1,\dots,v_n\},$ $E=\set{\{v_i,v_j\}}{i\neq j,\ v_i,v_j\in V},$  $D=\set{\{v_i,v_{i+1}\}}{i=1,\dots,n},$
    \end{center} and $(o,t)\{v_i,v_{i+1}\}=(v_i,v_{i+1})$ (indices mod $n$).
	In the associated T-RAAG $\mf a(\Delta),$ one has \[\left(\sum_{i=1}^n v_i\right)\2=\sum_{i=1}^nv_i\2+\sum_{i< j}[v_i,v_j]=2\sum_{i=1}^nv_i\2=0,\]
	that is, the sum of all canonical generators is a torsion element.
\end{exam}

The existence of torsion elements depends on the ground field. 
\begin{exam}
	Consider the restricted Lie algebra $\mf g=\pres{x,y}{[x,y]+x\2, [x,y]+y\2}.$ By abusing the notation of T-RAAG Lie algebras, $\mf g$ can be seen as the Lie algebra associated to the following directed graph \[\begin{tikzpicture}[
		>={Stealth[length=6pt]},   
		node/.style={circle,draw,minimum size=8mm,inner sep=1pt}
		]
		\draw[fill=black] (0,0) circle (1pt);
		\draw[fill=black] (2,0) circle (1pt);
		
		\draw[mid arrow] (0.05,0.1) to[bend left=20]  (1.95,0.1);
		\draw[mid arrow] (1.95,-0.1) to[bend left=20]  (0.05,-0.1);
	\end{tikzpicture}.\]
	
	
	Since $\mf g_2=kx\2,$ if $\alpha,\beta\in k,$ then $(\alpha x+\beta y)\2=0$ implies $\alpha^2+\beta^2+\alpha\beta=0,$ which has a non-trivial solution iff $k$ contains $\F_4.$  Notice that for such $k,$ if $\alpha\in k$ satisfies $\alpha^2+\alpha+1=0,$ then $\mf g=\pres{x',y'}{{x'}\2,{y'}\2}$ is a free product of elementary abelian Lie $k$-algebras, where $x'=\alpha x+y,$ and $y'=x+\alpha y.$ 
\end{exam}

Unlike the simplicial case, if $\Gamma$ is a mixed graph and $\Lambda$ is an induced subgraph, then $\mf a(\Lambda)$ might not be a retract of $\mf a(\Gamma)$ in a natural way.
\begin{exam}
	Consider the single-edge graph $\Gamma=$
	\begin{tikzpicture}
		\node[fill=black!100,circle,scale=0.3,draw, "$u$"] at (0,0) {};
		\node[fill=black!100,circle,scale=0.3,draw, "$v$"] at (1,0) {};
		\draw[thick,mid arrow] (0,0) -- (1,0);
	\end{tikzpicture}. 
	
	The quotient of the T-RAAG Lie algebra $\mf a(\Gamma)=\pres{u,v}{[u,v]+u\2}$ by the restricted ideal generated by $v$ is elementary abelian, and hence not a T-RAAG.
\end{exam}
Nevertheless, we can avoid the situation of the example if we only consider subgraphs containing all the negative vertices of $\Gamma.$ 
\begin{lem}\label{lem:twistedRetract}
	Let $\Gamma$ be a special mixed graph. Let $X$ be a set of vertices of $\Gamma$ not containing the terminus of any edge, i.e., $X\cap t(D)=\emptyset.$ Then, $\mf a(\Gamma)/(X)$ is the T-RAAG Lie algebra associated to the induced graph of $\Gamma$ spanned by $V\setminus X.$
\end{lem}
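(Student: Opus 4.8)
The plan is to construct two mutually inverse restricted homomorphisms between $\mf a(\Gamma)/(X)$ and $\mf a(\Lambda)$, where $\Lambda$ is the subgraph of $\Gamma$ induced by $V\setminus X$ and $(X)$ denotes the restricted ideal of $\mf a(\Gamma)$ generated by the canonical generators in $X$. Write $q:\mf a(\Gamma)\to\mf a(\Gamma)/(X)$ for the projection. Since $\Lambda$ is an induced subgraph, every defining relation of $\mf a(\Lambda)$ is already a defining relation of $\mf a(\Gamma)$; hence the natural homomorphism $\mf a(\Lambda)\to\mf a(\Gamma)$ (an embedding by Proposition \ref{prop:ind}) composed with $q$ gives a restricted homomorphism $\iota:\mf a(\Lambda)\to\mf a(\Gamma)/(X)$ carrying each $v\in V(\Lambda)$ to $q(v)$.

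The reverse map is the delicate one, and I expect its construction to be the main point of the argument. By the universal property of the free restricted Lie algebra, the linear assignment $v\mapsto v$ for $v\in V\setminus X$ and $v\mapsto 0$ for $v\in X$ extends uniquely to a restricted homomorphism $\mf f(V)\to\mf a(\Lambda)$; the task is to check that it annihilates the defining relations of $\mf a(\Gamma)$, so that it descends to $\mf a(\Gamma)$ and --- since it visibly kills $X$ --- factors through a restricted homomorphism $\rho:\mf a(\Gamma)/(X)\to\mf a(\Lambda)$. For an undirected edge $\e{vw}$ the relation $[v,w]$ maps to $0$ whenever $v$ or $w$ lies in $X$, and otherwise to $[v,w]$, which vanishes in $\mf a(\Lambda)$ since $\{v,w\}$ is an undirected edge of $\Lambda$. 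For a directed edge $\vec{vw}$ with $v=o(e)$, $w=t(e)$, the hypothesis $X\cap t(D)=\emptyset$ forces $w\notin X$; hence either $v\notin X$, so that $\vec{vw}$ is a directed edge of $\Lambda$ and $[v,w]+v\2$ maps to $[v,w]+v\2=0$ in $\mf a(\Lambda)$, or $v\in X$, so that $[v,w]+v\2$ maps to $[0,w]+0\2=0$. This verification is exactly where $X\cap t(D)=\emptyset$ is indispensable: if some terminus $w=t(\vec{vw})$ with $v\notin X$ were allowed in $X$, the relation $[v,w]+v\2$ would collapse to $v\2=0$, which is not a relation of $\mf a(\Lambda)$ --- precisely the pathology of the single directed-edge example immediately preceding the lemma.

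Finally, both composites act as the identity on canonical generators: $\rho\iota$ fixes each $v\in V(\Lambda)$ and so is the identity of $\mf a(\Lambda)$, while $\iota\rho$ fixes $q(v)$ for $v\in V\setminus X$ and sends $q(v)$ to $0$ for $v\in X$ --- but $q(v)=0$ already holds in $\mf a(\Gamma)/(X)$ in the latter case --- and so is the identity of $\mf a(\Gamma)/(X)$. Since restricted homomorphisms agreeing with the identity on a generating set are the identity, $\iota$ and $\rho$ are mutually inverse isomorphisms, identifying $\mf a(\Gamma)/(X)$ with $\mf a(\Lambda)$. Equivalently, one could run the argument as a Tietze-style elimination of the generators of $X$ from the presentation (\ref{eq:2pres}), but the two-map formulation makes the roles of freeness and of the hypothesis $X\cap t(D)=\emptyset$ the most transparent.
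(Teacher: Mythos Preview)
Your proof is correct and follows essentially the same approach as the paper: both construct the map $\mf a(\Gamma)\to\mf a(\Lambda)$ sending $v\in X$ to $0$ and $v\notin X$ to $v$, verify it is well-defined on relations (which is where $X\cap t(D)=\emptyset$ enters), and conclude it gives a retraction of the natural map $\mf a(\Lambda)\to\mf a(\Gamma)$. You spell out the relation-by-relation check and the two composites more explicitly than the paper does, but the underlying argument is the same.
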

\begin{proof}
	Let $\Lambda$ be the graph spanned by $V\setminus X.$ 
	We prove that the natural map  $\iota:\mf a(\Lambda)\to \mf a(\Gamma)$ admits a retraction, namely, a left inverse. Let $\sigma(v)=0$ for $v\in X,$ and $\sigma(v)=v$ for any vertex $v$ of $\Lambda.$ A computation of $\sigma$ on the relations shows that $\sigma$ extends to a well-defined restricted homomorphism, which is thus a retraction of $\iota.$
\end{proof}
The same proof shows that $X$ can also contain a negative vertex $v,$ provided that it also contains the whole \textit{star} of $v,$ namely, the set of its adjacent vertices. 

In the unidirected case, the graph is determined by its associated RAAG Lie algebra, namely, if $\mf a(\Gamma)\simeq \mf a(\Gamma'),$ then the graphs $\Gamma$ and $\Gamma'$ are isomorphic (see Kim and Roush \cite{kimroush}, and Droms \cite{droms_iso} for the group theoretic case). As for twisted right-angled Artin groups (cf. \cite{sb_droms}), this is no longer true in the Lie algebra case.
\begin{exam}\label{ex:nonrigid}
	Let \[\vec \Sigma_2=\begin{tikzpicture}
		\node[fill=black!100,circle,scale=0.3,draw,"$x$"] at (0,-1) {};
		\node[fill=black!100,circle,scale=0.3,draw,"$y_1$"] at (-1,0) {};
		\node[fill=black!100,circle,scale=0.3,draw,"$y_2$"] at (1,0) {};
		\draw[thick,mid arrow] (0,-1) -- (1,0);
		\draw[thick,mid arrow] (0,-1) -- (-1,0);
	\end{tikzpicture}\]
	and consider the associated T-RAAG Lie algebra over $k.$
	
	Let $y'=y_1+y_2.$ One has $[y',x]=[y_1,x]+[y_2,x]=2x\2=0.$ It follows that the assignment $\phi(x)=x',$ $\phi(y_1)=y_1',$ $\phi(y_2)=y_1'+y'$ defines an isomorphism between $\mf a(\vec{\Sigma}_2)$ and the restricted Lie algebra \[\pres{x',y'_1,y'}{[x',y'],[x',y_1']={x'}\2},\]
	which is the T-RAAG $\mf a(\Gamma_2),$ where  \[\Gamma_2=\begin{tikzpicture}
		\node[fill=black!100,circle,scale=0.3,draw,"$x$"] at (0,-1) {};
		\node[fill=black!100,circle,scale=0.3,draw,"$y_1'$"] at (-1,0) {};
		\node[fill=black!100,circle,scale=0.3,draw,"$y'$"] at (1,0) {};
		\draw[thick] (0,-1) -- (1,0);
		\draw[thick,mid arrow] (0,-1) -- (-1,0);
	\end{tikzpicture}.\]
	In particular, we see that the mixed graph is not determined by the isomorphism class of the associated T-RAAG. 
	
	This procedure clearly applies to any \textit{directed star graph} 
	$\vec \Sigma_{n}$ with vertex set  $V=\{0,1,\dots,n\},$ and edge sets $E=D=\set{\{0,i\}}{i=1,\dots,n},$ with $o(e)=0$ for all $e\in D.$ The graph $\Gamma_n=(V,E,\{0,1\},o,t)$ has the same simplicial edge-set as $\vec \Sigma_{n},$ and a single directed edge, say $D'=\{\{0,1\}\}.$
\end{exam}

\subsection{Hilbert series of T-RAAGs}
Let $\Gamma=(V,E,D,o,t)$ be a special mixed graph. Denote by $\Gamma^\circ$ the simple graph with vertex set $V$ and edge-set $E'=E\setminus D.$ We compute the Hilbert series of $\mf a(\Gamma)$ from that of the RAAG Lie algebra $\mf a(\Gamma^\circ).$

In fact, one clearly has a surjective restricted homomorphism $\pi:\mf a(\Gamma^\circ)\to \mf a(\Gamma),$ which is an isomorphism in degree $1.$ Since these T-RAAG Lie algebras are Koszul, the inflation $\pi^\ast:H^\bullet(\mf a(\Gamma))\to H^\bullet(\mf a(\Gamma^\circ))$ is surjective, and we need to compute its kernel.

For this, fix an ordering on $V$ and let $\mf X$ be the set of cliques of $\Gamma$ properly containing the terminus of some edges in $D,$ i.e., \[\mf X=\set{\mathcal O=\{v_1,\dots,v_n\}\subseteq V}{\mathcal O\cap V_-\neq \emptyset,\ \abs{\mathcal O}=n\geq 2}.\]
Let $\mathcal O\in \mf X.$ As $\Gamma$ is a special graph, $\abs{\mathcal O\cap V_-}=1,$ and we let $v_{\mathcal O}$ be the minimal vertex of $\mathcal O\cap V_+$ (with respect to the chosen ordering). 

By Corollary \ref{cor:TRAAGcoho} the ideal $\ker \pi^\ast$ is generated in $H^\bullet(\mf a(\Gamma))$ by the elements $(v^\ast)^2,$ where $v\in o(D)$ and $v^\ast$ denotes the Kronecker dual of the canonical basis element $v,$ i.e., $v^\ast(w)=1$ if $w=v,$ and $v^\ast(w)=0$ if $w\in V\setminus\{v\}.$ 
Moreover, \[\ker\pi^\ast=\sum_{\mathcal O\in \mf X}k\cdot\left[ v_{\mathcal O}^\ast\prod _{v\in \mathcal O\cap V_+}v^\ast\right].\]

Indeed, if $v_1,\dots,v_m$ are pairwise-adjacent vertices, then there are only two possible cases: 

(1) If $v_1$ is not the origin of any edge $e\in D,$ then $(v_1^\ast)^2$ already vanishes in $H^\bullet(\mf a(\Gamma)).$ 

(2) If $v_1$ is the origin of an edge $e\in D,$ with $v=t(e),$ then $(v_1^\ast)^2v_2^\ast\dots v_m^\ast=v_1^\ast v^\ast v_2^\ast\dots v_m^\ast$ does not vanish in $H^\bullet(\mf a(\Gamma))$ only if each $v_i$ is adjacent to $v$; if this is the case, then $(v_1^\ast)^2v_2^\ast\dots v_m^\ast=v_1^\ast (v_2^\ast)^2\dots v_m^\ast=\dots=v_1^\ast v_2^\ast\dots (v_m^\ast)^2,$ and $\{v,v_1,\dots,v_m\}\in \mf X$ is a clique containing a negative element.

On the other hand, if $v_1,\dots,v_m$ are not all pairwise adjacent, then $(v_1^\ast)^2v_2^\ast\dots v_m^\ast=0.$

\begin{exam}
	(1) Consider the single-edge $\Gamma=$
	\begin{tikzpicture}
		\node[fill=black!100,circle,scale=0.3,draw, "$u$"] at (0,0) {};
		\node[fill=black!100,circle,scale=0.3,draw, "$v$"] at (1,0) {};
		\draw[thick,mid arrow] (0,0) -- (1,0);
	\end{tikzpicture}; one has $\Gamma^\circ=$ 	\begin{tikzpicture}
		\node[fill=black!100,circle,scale=0.3,draw, "$u$"] at (0,0) {};
		\node[fill=black!100,circle,scale=0.3,draw, "$v$"] at (1,0) {};
	\end{tikzpicture},
	and thus \[H^\bullet(\mf a(\Gamma))=k[u^\ast,v^\ast]/((u^\ast)^2+u^\ast v^\ast,(v^\ast)^2),\]and \[H^\bullet(\mf a(\Gamma^\circ))=k[u^\ast,v^\ast]/((u^\ast)^2,u^\ast v^\ast,(v^\ast)^2),\]
	which gives $\ker\pi^\ast=k\cdot (u^\ast)^2.$

	(2) Consider the graphs
	\begin{figure}[H]
		\centering
		\begin{tikzpicture}
			
			\node["$\Gamma=$"] at (-2,-0.2) {};
			\node[fill=black!100,circle,scale=0.3,draw, "$v_1$"] (a) at (-1,1) {};
			\node[fill=black!100,circle,scale=0.3,draw, "$v_2\ \ \ \ $"] (b) at (-1,-1) {};
			\node[fill=black!100,circle,scale=0.3,draw, "$c$"] (c) at (0,0) {};
			\node[fill=black!100,circle,scale=0.3,draw, "$w_1$"] (d) at (1,1) {};
			\node[fill=black!100,circle,scale=0.3,draw, "$\ \ \ \ \ w_2$"] (e) at (1,-1) {};
			\node[fill=black!100,circle,scale=0.3,draw] (f) at (0,-1) {};
			
			\node["$d$"] at (0,-1.7) {};
			
			\draw[thick] (a) -- (b);
			\draw[thick,mid arrow] (a) -- (c);
			\draw[thick,mid arrow] (b) -- (c);
			\draw[thick,mid arrow] (e) -- (c);
			\draw[thick,mid arrow] (d) -- (c);
			\draw[thick] (d) -- (e);
			\draw[thick,mid arrow] (f) -- (c);
		\end{tikzpicture}
		\quad and \quad
		\begin{tikzpicture}
			
			\node["$\Gamma^\circ=$"] at (-2,-0.2) {};
			\node[fill=black!100,circle,scale=0.3,draw, "$v_1$"] (a) at (-1,1) {};
			\node[fill=black!100,circle,scale=0.3,draw, "$v_2\ \ \ \ $"] (b) at (-1,-1) {};
			\node[fill=black!100,circle,scale=0.3,draw, "$c$"] (c) at (0,0) {};
			\node[fill=black!100,circle,scale=0.3,draw, "$w_1$"] (d) at (1,1) {};
			\node[fill=black!100,circle,scale=0.3,draw, "$\ \ \ \ \ w_2$"] (e) at (1,-1) {};
			\node[fill=black!100,circle,scale=0.3,draw] (f) at (0,-1) {};
			
			\node["$d$"] at (0,-1.7) {};
			
			\draw[thick] (a) -- (b);
			\draw[thick] (d) -- (e);
		\end{tikzpicture}.
	\end{figure}
	
	The cohomology rings of $\mf a(\Gamma)$ and of $\mf a(\Gamma^\circ)$ are quotients of the polynomial algebra \[k[v_i^\ast,w_i^\ast,c^\ast,d^\ast: i=1,2]\] by the ideals generated respectively by $\Omega_\Gamma$ and $\Omega_{\Gamma^\circ},$ where \begin{gather*}
		\Omega_\Gamma=\spn_k\{(c^\ast)^2,c^\ast v_i^\ast+(v_i^\ast)^2,c^\ast w_i^\ast+(w_i^\ast)^2,c^\ast d^\ast+(d^\ast)^2,d^\ast v_i^\ast,d^\ast w_i^\ast,v_i^\ast w_j^\ast:\ i,j=1,2\},\\
		\Omega_{\Gamma^\circ}=\spn_k\{(c^\ast)^2,(v_i^\ast)^2,(w_i^\ast)^2,(d^\ast)^2,d^\ast v_i^\ast,d^\ast w_i^\ast,v_i^\ast w_j^\ast, c^\ast v_i^\ast,c^\ast w_i^\ast,c^\ast d^\ast:\ i,j=1,2\}.
	\end{gather*}
	
	The homogeneous components of the kernel $I=\ker\pi^\ast$ are 
	\begin{align*}
		I_j=\begin{cases}
			0,& j<2\text{, or }j>3\\
			k\cdot(d^\ast)^2+\sum_{i=1}^2k\cdot (v_i^\ast)^2+k\cdot (w_i^\ast)^2, &j=2\\
			k\cdot (v_1^\ast)^2v_2^\ast+k\cdot (w_1^\ast)^2w_2^\ast, &j=3
		\end{cases}.
	\end{align*}
	
	Indeed, $(v_1^\ast)^2v_2=(v_2^\ast)^2,$ and $(v_1^\ast)^3=(v_1^\ast)^2 c^\ast=v_1^\ast (c^\ast)^2=0,$ etc.		
\end{exam}

\begin{prop}
	Let $\Gamma$ be a special mixed graph. For $i\geq 2,$ let $n_i$ be the number of $i$-cliques containing a negative vertex. Then, the Poincaré series of $\mf a(\Gamma),$ i.e., the Hilbert series of $H^\bullet(\mf a(\Gamma)),$ is \[\sum_{i\geq 0}\dim H^i(\mf a(\Gamma))t^i=\operatorname{C_{\Gamma^\circ}}(z)+\sum_{i\geq 2}n_iz^i\]
	where $C_\Delta(t)$ denotes the clique polynomial of the simplicial graph $\Delta.$
\end{prop}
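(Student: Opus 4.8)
The plan is to read off the Hilbert series of $H^\bu(\mf a(\Gamma))$ from the short exact sequence of graded $k$-vector spaces $0\to\ker\pi^\ast\to H^\bu(\mf a(\Gamma))\overset{\pi^\ast}{\longrightarrow}H^\bu(\mf a(\Gamma^\circ))\to 0$ together with the description of $\ker\pi^\ast$ obtained just above. Since $\mf a(\Gamma)$ and $\mf a(\Gamma^\circ)$ are Koszul by Corollary~\ref{cor:TRAAGkosz}, their cohomology is concentrated on the diagonal, so the Poincaré series is the Hilbert series of the cohomology ring by cohomological degree, and in each degree $i$ one has $\dim H^i(\mf a(\Gamma))=\dim H^i(\mf a(\Gamma^\circ))+\dim(\ker\pi^\ast)_i$. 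The case $D=\emptyset$ of Corollary~\ref{cor:TRAAGcoho} identifies $H^\bu(\mf a(\Gamma^\circ))$ with the exterior face ring of the clique complex of the simple graph $\Gamma^\circ$, whose Hilbert series by degree is precisely the clique polynomial $C_{\Gamma^\circ}(z)$; this yields the first summand, so everything reduces to proving $\dim(\ker\pi^\ast)_i=n_i$ for $i\geq2$ (and $(\ker\pi^\ast)_i=0$ for $i\leq1$, which is immediate as the displayed generators of $\ker\pi^\ast$ all lie in degree $\geq2$).

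From the formula preceding the statement, $(\ker\pi^\ast)_i$ is spanned by the classes $e_{\mathcal O}:=v_{\mathcal O}^\ast\prod_{v\in\mathcal O\cap V_+}v^\ast$ as $\mathcal O$ ranges over the cliques in $\mf X$ of size $i$, i.e.\ over the $i$-cliques of $\Gamma$ that contain a (necessarily unique) negative vertex $w_{\mathcal O}$; there are $n_i$ of these. Since $v_{\mathcal O}\in\mathcal O\cap V_+$ is adjacent to $w_{\mathcal O}$, one has $\overrightarrow{v_{\mathcal O}w_{\mathcal O}}\in D$, so the relation $(v_{\mathcal O}^\ast)^2=v_{\mathcal O}^\ast w_{\mathcal O}^\ast$ holds in $H^\bu(\mf a(\Gamma))$ and rewrites $e_{\mathcal O}$ as the squarefree clique monomial $\xi_{\mathcal O}:=\prod_{v\in\mathcal O}v^\ast$. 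Thus the claim amounts to showing that $\{\xi_{\mathcal O}:\mathcal O\in\mf X,\ |\mathcal O|=i\}$ is linearly independent in $H^i(\mf a(\Gamma))$.

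To see this, for each such $\mathcal O$ I would use the restriction map $\rho_{\mathcal O}:H^\bu(\mf a(\Gamma))\to H^\bu(\mf a(\mathcal O))$ induced by the embedding $\mf a(\mathcal O)\hookrightarrow\mf a(\Gamma)$ of Proposition~\ref{prop:ind} (with $\mathcal O$ viewed as the induced subgraph on that vertex set); on generators $\rho_{\mathcal O}$ sends $v^\ast\mapsto v^\ast$ for $v\in\mathcal O$ and $v^\ast\mapsto0$ otherwise. By Corollary~\ref{cor:TRAAGcoho} one has $\cd\mf a(\mathcal O)=|\mathcal O|$, and a one-line reduction in the presentation of $H^\bu(\mf a(\mathcal O))$ (using $\xi_v^2=\xi_v w_{\mathcal O}^\ast$ for origins and $\xi_v^2=0$ for non-origins) shows its top component $H^{|\mathcal O|}(\mf a(\mathcal O))$ is one-dimensional, spanned by $\prod_{v\in\mathcal O}v^\ast$. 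Hence $\rho_{\mathcal O}$ sends $\xi_{\mathcal O}$ to this nonzero top class while killing every $\xi_{\mathcal O'}$ with $\mathcal O'\not\subseteq\mathcal O$ --- in particular every $\xi_{\mathcal O'}$ with $|\mathcal O'|=|\mathcal O|$ and $\mathcal O'\neq\mathcal O$. Applying the maps $\rho_{\mathcal O}$ to a vanishing combination $\sum_{\mathcal O}\lambda_{\mathcal O}\xi_{\mathcal O}=0$ forces every $\lambda_{\mathcal O}=0$, so $\dim(\ker\pi^\ast)_i=n_i$ and the stated formula follows. I expect the genuinely delicate point to be precisely this dimension count --- confirming that the spanning set of $\ker\pi^\ast$ is a basis in each degree, so that the count of $i$-cliques with a negative vertex is the right one --- whereas the recognition of $H^\bu(\mf a(\Gamma^\circ))$ as the exterior face ring and the bookkeeping with the partition $V=V_+\cup V_-$ (in particular, that negative vertices are isolated in $\Gamma^\circ$, so no $i$-clique with $i\geq2$ is double counted) are routine consequences of Corollaries~\ref{cor:TRAAGkosz} and~\ref{cor:TRAAGcoho}.
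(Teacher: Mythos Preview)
Your proof is correct and follows the same strategy as the paper's: exploit the short exact sequence coming from the surjection $\pi^\ast$, identify the Hilbert series of $H^\bu(\mf a(\Gamma^\circ))$ with $C_{\Gamma^\circ}(z)$, and count $\dim(\ker\pi^\ast)_i$. The paper's own proof is terser --- it simply asserts $\dim(\ker\pi^\ast)_i=n_i$ ``from the computations performed above'' --- whereas you supply an explicit linear-independence argument via the restriction maps $\rho_{\mathcal O}$ to $H^\bu(\mf a(\mathcal O))$, using that $\cd\mf a(\mathcal O)=|\mathcal O|$ and that the top cohomology of a clique is one-dimensional; this is a clean way to justify the step the paper leaves implicit, and your identification of it as the delicate point is apt.
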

\begin{proof}
	Consider the natural surjection $\pi^\ast:H^\bullet(\mf a(\Gamma))\to H^\bullet(\mf a(\Gamma^\circ)).$ Recall that the restricted cohomology of $\mf a(\Gamma^\circ)$ is isomorphic to the Chevalley-Eilenberg cohomology of the ordinary Lie subalgebra $\li_{\Gamma^\circ}$ of $\mf a(\Gamma^\circ)$ generated by its elements of degree $1$ (cf. the restrictification functor in \cite{sb_kosz}).  It follows that \[\sum_{i\geq 0}\dim H^i(\mf a(\Gamma))t^i=\sum_{i\geq 0}\left(\dim H^i(\mf a(\Gamma^\circ))+\dim(\ker\pi^\ast)_i\right)z^i.\]
	From the computations performed above, we get $\dim (\ker\pi^\ast)_i=n_i,$ which gives the desired formula, as the Poincaré polynomial of an ordinary RAAG Lie algebra is the clique polynomial of the defining graph. 
\end{proof}
The next result thus follows at once. 
\begin{thm}
	Let $\Gamma$ be a special mixed graph, and let $\mf a(\Gamma)$ be the associated T-RAAG Lie algebra. Then, the Poincaré series of $\mf a(\Gamma)$ is the clique polynomial of the underlying simple graph $\bar\Gamma.$
\end{thm}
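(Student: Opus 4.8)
The plan is to read the statement off from the immediately preceding Proposition, which already gives
\[
\sum_{i\geq 0}\dim H^i(\mf a(\Gamma))\,z^i \;=\; C_{\Gamma^\circ}(z) + \sum_{i\geq 2} n_i z^i ,
\]
where $\Gamma^\circ=(V,E\setminus D)$ and $n_i$ is the number of $i$-cliques of $\Gamma$ containing a negative vertex. Writing $c_i(\Delta)$ for the number of $i$-cliques of a simplicial graph $\Delta$ (with the convention $c_0=1$), everything reduces to the purely combinatorial identity $c_i(\bar\Gamma)=c_i(\Gamma^\circ)+n_i$ for every $i\geq 2$, together with the trivial equalities $c_0(\bar\Gamma)=c_0(\Gamma^\circ)=1$ and $c_1(\bar\Gamma)=c_1(\Gamma^\circ)=|V|$, which dispose of degrees $0$ and $1$ (where $n_0=n_1=0$).

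The heart of the matter is the following dichotomy, valid for any $i$-clique $\mathcal O$ of $\bar\Gamma$ with $i\geq 2$, in which the speciality of $\Gamma$ enters essentially: $\mathcal O$ contains no directed edge if and only if $\mathcal O\cap V_-=\emptyset$. Indeed, if $e\in D$ with $e\subseteq\mathcal O$ then $t(e)\in\mathcal O\cap V_-$; conversely, if $v\in\mathcal O\cap V_-$ then, since $i\geq 2$, there is $w\in\mathcal O\setminus\{v\}$ with $\{v,w\}\in E$, and since $v$ lies on $\{v,w\}$ and is the terminus of some directed edge, speciality forces $\{v,w\}\in D$, so $\mathcal O$ does carry a directed edge. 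Hence the $i$-cliques of $\bar\Gamma$ split into those using only edges of $E\setminus D$ — which are exactly the $i$-cliques of $\Gamma^\circ$ — and those meeting $V_-$ — which are counted by $n_i$. This is precisely the desired identity. (Any isolated vertex one might have assigned to $V_-$ lies in no clique of size $\geq 2$, so it affects none of the $n_i$; in particular the count is independent of the chosen partition $V=V_+\cup V_-$.)

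Summing $c_i(\bar\Gamma)z^i$ over $i\geq 0$ then yields $C_{\bar\Gamma}(z)$, which is the claim. No genuine obstacle arises: once the preceding Proposition is in hand this is a short bookkeeping argument, and the only point needing care is that the hypothesis ``special'' is exactly what guarantees that a clique of size $\geq 2$ meets $V_-$ precisely when it carries a directed edge — for a non-special mixed graph a clique could touch $V_-$ through an ordinary edge, and the formula would fail.
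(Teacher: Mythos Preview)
Your proof is correct and is precisely the bookkeeping the paper has in mind when it writes ``The next result thus follows at once'': you split the $i$-cliques of $\bar\Gamma$ for $i\geq 2$ into those avoiding $V_-$ (the cliques of $\Gamma^\circ$) and those meeting $V_-$ (counted by $n_i$), and add. The one place where your phrasing is slightly loose is the clause ``since $v$ \ldots\ is the terminus of some directed edge'': strictly speaking $v\in V_-$ need not force $v\in t(D)$ a priori, but as you note in your parenthetical, any $v\in V_-\setminus t(D)$ must be isolated (because a valid signature forces every edge on a negative vertex to be directed toward it), so such $v$ never appear in a clique of size $\geq 2$ and the argument goes through unchanged.
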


Since the restricted envelope $u(\mf a(\Gamma))$ is Koszul, it follows from Fr\"oberg's formula that the Hilbert series of $u(\mf a(\Gamma))$ is

\begin{equation}\label{eq:hilbu}
	\sum_{i\geq 0}\dim (u(\mf a(\Gamma))_i) t^i=C_{\bar\Gamma}(-t)^{-1}.
\end{equation}

By the main theorem of Petrogradsky \cite{restrwitt}, the Hilbert series of the T-RAAG $\mf a(\Gamma)$ is 
\[\sum_{i\geq 1}\dim\mf (a(\Gamma)_i)t^i=\mathcal{L}_2(\phi):=\sum_{n\geq 1}\frac{\mu_2(n)}{n}\ln \phi(t^n)\]
where $\phi$ is the formal power series of the Hilbert series (\ref{eq:hilbu}) of $u(\mf a(\Gamma)),$ and $\mu_2$ is the mod-$2$ M\"obius function described in \S \ref{sec:witt}. 

\section{Subalgebras of T-RAAGs}
In this section, we provide a complete characterization of Bloch-Kato T-RAAGs in terms of the structure of the defining graph. 

To begin with, we exclude non-special graphs from the discussion.

\begin{prop}\label{prop:nonspecial}
	Let $\Gamma$ be a non-special mixed graph. Then, $\mf a(\Gamma)$ contains a non\--quad\-rat\-ic, standard subalgebra. 
\end{prop}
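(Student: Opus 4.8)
The plan is to produce, for every non-special mixed graph $\Gamma$, an explicit two-generated standard subalgebra $\gen{a,b}$ of $\mf a(\Gamma)$ that is not quadratic, in the spirit of the subalgebra $\mf h$ of Example~\ref{ex:nonBKds}. I would first translate non-speciality into a local picture on three vertices. By definition there are a directed edge $\vec{pq}\in D$ and an edge $\{q,r\}\in E$ with $r\notin\{p,q\}$ that is not directed into $q$, so $\{q,r\}$ is either undirected or equals $\vec{qr}$. Inspecting the possibilities for the remaining edge $\{p,r\}$ --- and, when $\{p,r\}=\vec{rp}$, re-reading the datum along the directed path $r\to p\to q$ --- one checks that $\Gamma$ must contain three distinct vertices $x,y,z$ realizing one of: (i) $\vec{xy}\in D$ with an \emph{undirected} edge $\{y,z\}$ and $\{x,z\}\ne\vec{zx}$; (ii) a directed $2$-path $\vec{xy},\vec{yz}\in D$ with $\{x,z\}\ne\vec{zx}$; or (iii) a directed $3$-cycle $\vec{xy},\vec{yz},\vec{zx}\in D$.

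Next I would isolate the algebraic mechanism as a lemma. Let $\mf g$ be a standard restricted Lie $k$-algebra and $a,b\in\mf g_1$ with $[b,[b,a]]=0$ in $\mf g$; assume moreover that either \emph{(a)} $a\2,[a,b],b\2$ are linearly independent in $\mf g_2$, or \emph{(b)} $a\2=0$ in $\mf g$ while $[a,b]$ and $b\2$ are linearly independent in $\mf g_2$. Then $\gen{a,b}$ is not quadratic. Indeed, writing $\mf f$ for the free restricted Lie algebra on two generators, the free cover $\mf f\to\gen{a,b}$ is an isomorphism in degrees $1$ and $2$ in case (a), and has one-dimensional degree-$2$ kernel $\spn\{a\2\}$ in case (b); hence $\q\gen{a,b}=\mf f$ in case (a) and $\q\gen{a,b}=\pres{a,b}{a\2}$ in case (b). In both cases $[b,[b,a]]=[b,[a,b]]$ is a nonzero element of $\mf f_3$ --- one of the two basis elements $[a,[a,b]]$, $[b,[a,b]]$ of that space, whose dimension is $2$ by the restricted Witt formula of \S\ref{sec:witt} --- that survives in $\q\gen{a,b}$: in case (a) because $\q\gen{a,b}=\mf f$, and in case (b) because the degree-$3$ part of the defining ideal of $\pres{a,b}{a\2}$ is $[\mf f_1,a\2]=\spn\{[a,[a,b]]\}$, as $[a,a\2]=0$ by identity (2). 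Thus $\q\gen{a,b}\to\gen{a,b}$ is not injective.

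The final ingredient is the explicit basis $\{v\2:v\in V\}\cup\{[v,w]:\{v,w\}\notin E\}$ of $\mf a(\Gamma)_2$, read off from presentation~\eqref{eq:2pres}: the quadratic defining relations are each supported on a single pair of vertices, with pairwise distinct bracket terms, so they are linearly independent and involve none of the brackets $[v,w]$ with $\{v,w\}\notin E$. With this, the remaining verifications are a finite computation using only identities (1)--(3) of \S\ref{sec:restr}. For configurations (i) and (ii) I take $a=x+z$, $b=y$: by identity (2) and the relations $[v,v]=0$ one gets $[b,[b,a]]=[y,[y,x]]+[y,[y,z]]=0$, while $(x+z)\2=x\2+[x,z]+z\2$, the element $[x+z,y]$ (equal to $x\2$ in case (i) and to $x\2+y\2$ in case (ii)) and $y\2$ are linearly independent in $\mf a(\Gamma)_2$ once one runs through the allowed orientations of $\{x,z\}$ --- the excluded orientation $\vec{zx}$ being precisely the one that would force $(x+z)\2=x\2$ and collapse independence, which is exactly why (i)--(ii) were set up to avoid it. For configuration (iii) I take $a=x+y+z$, $b=x$: substituting $[x,y]=x\2$, $[y,z]=y\2$, $[z,x]=z\2$ gives $a\2=0$, so $a$ is a torsion element, $[a,b]=x\2+z\2$ and $b\2=x\2$ are linearly independent, and $[b,[b,a]]=[x,x\2+z\2]=0$; case (b) of the lemma then applies.

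The step I expect to be the main obstacle is the opening graph-theoretic reduction and the accompanying orientation bookkeeping for $\{x,z\}$: one must be sure that every non-special $\Gamma$ produces one of the three configurations, and that in the non-cyclic cases the bad orientation $\vec{zx}$ can always be avoided by passing to the directed $2$-path through $y$ --- otherwise the linear-independence hypothesis in the lemma fails and $\gen{x+z,y}$ is in fact quadratic. Everything after that is mechanical.
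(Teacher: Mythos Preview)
Your argument is correct and follows the same strategy as the paper: locate a non-special triple of vertices and exhibit a two-generated standard subalgebra whose quadratic cover maps non-injectively in degree~$3$. The paper does this more tersely by taking $\mf m=\gen{u+w,v}$ inside $\mf a(\Lambda)$ and asserting that $\q\mf m$ is free, without discussing the possible edge $\{u,w\}$.

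Your version is in fact more careful on exactly the point you flagged as the main obstacle. The paper's blanket claim that $\q\mf m$ is free tacitly ignores the orientation of $\{u,w\}$: when $\{u,w\}=\vec{wu}$ one gets $(u+w)\2=u\2$, and then for $\theta=0$ the three degree-$2$ elements collapse (indeed $\mf m$ is quadratic there), while for $\theta=1$ --- the directed $3$-cycle --- there is a genuine degree-$2$ relation $(u+w)\2+[u+w,v]+v\2=0$, so $\q\mf m$ is not free, although $\mf m$ is still non-quadratic. Your reduction to configurations (i)--(iii), with the re-reading along $r\to p\to q$ to avoid the bad orientation $\vec{zx}$ and the separate treatment of the $3$-cycle via $a=x+y+z$, $b=x$ and case~(b) of your lemma, handles precisely these situations. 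So the two proofs are the same in spirit, but your orientation bookkeeping and the split lemma make the argument watertight where the paper is elliptic.
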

\begin{proof}
	Since $\Gamma$ is not special, it contains an induced subgraph $\Lambda$ on three vertices $u,v,w$ such that $\vec{uv}\in D$ and either $\vec{vw}\in D$ or $\overline{vw}\in E\setminus D.$ We put $\theta=1$ in the first case, and $\theta=0$ otherwise, so that $[v,w]=\theta v\2.$ 
	Consider the subalgebra $\mf b$ of $\mf a(\Gamma)$ generated by the vertices of $\Lambda.$ We can assume that $\mf b$ is quadratic, and hence isomorphic to $\mf a(\Lambda).$ 
	
	Let $\mf m$ be the subalgebra of $\mf a(\Lambda)$ generated by $u+w$ and $v.$ Since $[u+w,v]=u\2+\theta v\2,$ the quadratic cover $\q\mf m$ is a free restricted Lie algebra, and hence it is a proper extension of $\mf m$ as $[u+w,v\2]=0.$
\end{proof}

\begin{exam}\label{ex:nonBKgraphs}
	Let $\Gamma=(V,E,D,o,t)$ be a special mixed graph, and let $\mf a(\Gamma)$ be the associated T-RAAG over $k.$
	
	\begin{enumerate}
		\item\label{ex:LambdaS} Let $\Gamma=\Lambda_s,$ i.e., $V=\{1,2,3\}$ and $E=D=\{\vec{12},\vec{32}\}.$ Denote the canonical generator of $\mf a(\Lambda_s)$ corresponding to $i\in V$ by $x_i.$ 
		
				%
		
		Consider the subalgebra $\mc b$ generated by $x_1+x_3$ and $x_2.$ Then, \[[x_2,x_1+x_3]=x_1\2+x_3\2\qquad\mbox{and}\qquad(x_1+x_3)\2=x_1\2+x_3\2+[x_1,x_3],\] proving that $\q\mf b$ is free. Since for $i\in \{1,3\}$ \[[x_2\2,x_i]=[x_2,[x_2,x_i]]=[x_2,x_i\2]=[[x_2,x_i],x_i]=[x_i\2,x_i]=0,\] it follows that $[x_2\2,x_1+x_3]=0,$ and hence $\mf b$ is not quadratic. Notice that, for RAAG Lie algebras associated to simple graphs, the existence of non-quadratic standard subalgebras generated by $2$ elements is prevented by \cite{sb_kosz}.
		\item 	Let $\bar\Gamma$ be the square graph, i.e. $V=\{1,2,3,4\}$ and $E=\{\{1,2\},\{2,3\},\{3,4\},\{1,4\}\}$.
		\begin{enumerate}
			\item\label{ex:square} If $D=\emptyset,$ then the subalgebra $\mf b$ of $\mf a(\Gamma)$ generated by $x_1+x_2,x_3,x_4$ is not quadratic. Indeed, the only degree-$2$ relation of $\mf b$ is $[x_3,x_4]=0,$ and one has $[[x_1+x_2,x_3],x_4]=0.$
			\item Suppose that $\vec{12}\in D.$ Since the graph $\Gamma$ is special, also $\vec{32}\in D.$ Then the subgraph $\Lambda$ on $\{1,2,3\}$ is the same mixed graph of (\ref{ex:LambdaS}), and $\mf a(\Lambda_s)$ is a standard subalgebra of $\mf a(\Gamma).$
		\end{enumerate}
		\item Suppose that $\bar\Gamma=P_4$ is the path of length $3,$ i.e., $V=\{1,2,3,4\}$ and  $E=\{\{2,3\},\{3,4\},\{4,1\}\}.$ 
		\begin{enumerate}
			\item If $D=\emptyset,$ then, the same computations as in (\ref{ex:square}) prove that the subalgebra $\mf b$ of $\mf a(\Gamma)$ generated by $x_1+x_2,x_3,x_4$ is not quadratic. 
			\item If $D$ is nonempty, then either $\Gamma$ contains an induced $\Lambda_s,$ or $V_-\subseteq\{1,2\}.$ In the latter case, let $\theta:V\to k$ be the associated signature. 
			One has \begin{align*}
				[[x_1+x_2,x_3],x_4]&=[[x_1,x_3]+\theta(x_2)x_3\2,x_4]=\\
				&=[[x_1,x_4],x_3]+[x_1,[x_3,x_4]]+\theta(x_2)[x_3\2,x_4]=\\
				&=\theta(x_1)[x_4\2,x_3]+0+0=0.
			\end{align*}

			On the other hand, the only degree-$2$ relation involving the elements  $x_1+x_2,x_3,x_4$ are $[x_1,x_2]=x_i\2,$ for $\theta(x_i)=0$ ($i=1,2$), and  $[x_3,x_4]=0,$ proving that the standard subalgebra generated by those elements is not quadratic.
		\end{enumerate}
	\end{enumerate}
\end{exam}
It follows that if $\Gamma$ is not a Droms mixed graph, then $\mf a(\Gamma)$ contains a non-quadratic standard subalgebra.

\begin{lem}\label{lem:simpleBK}
	If $\Gamma$ is a simple graph, i.e., $D=\emptyset,$ then all standard restricted subalgebras of $\mf a(\Gamma)$ are RAAG Lie algebras if and only if $\Gamma$ is a Droms graph. 
\end{lem}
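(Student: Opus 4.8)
I would prove the two implications of the equivalence separately; the forward direction (all standard subalgebras RAAGs $\Rightarrow$ $\Gamma$ Droms) is short, while sufficiency carries the weight.

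\emph{Necessity of the Droms condition.} If the simple graph $\Gamma$ is not Droms it contains an induced square $C_4$ or an induced path $P_4$; call it $\Lambda$. By Proposition~\ref{prop:ind}, $\mf a(\Lambda)$ embeds into $\mf a(\Gamma)$, and Example~\ref{ex:nonBKgraphs} (case~\ref{ex:square} for $C_4$, and the analogous computation for $P_4$) exhibits inside $\mf a(\Lambda)$, hence inside $\mf a(\Gamma)$, a standard restricted subalgebra $\mf b$ that is not quadratic. Since every RAAG Lie algebra is Koszul by Corollary~\ref{cor:TRAAGkosz}, in particular quadratic, $\mf b$ cannot be a RAAG Lie algebra; this is exactly the observation recorded just before the statement.

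\emph{Sufficiency.} I plan an induction on $\abs{V}$ using the recursive description of Droms graphs as the smallest class of simple graphs containing the single vertex and closed under disjoint unions and coning, together with the fact that induced subgraphs of Droms graphs are Droms. The two functorial facts are $\mf a(\nabla_z\Gamma')\cong\mf a(\Gamma')\times\mf k$, where $\nabla_z\Gamma'$ is $\Gamma'$ with a new vertex $z$ adjacent to all others and $\mf k=\gen{z}$ is the rank-one free abelian Lie algebra, and $\mf a(\Gamma_1\sqcup\Gamma_2)\cong\mf a(\Gamma_1)\amalg\mf a(\Gamma_2)$. A connected Droms graph on at least two vertices is such a cone $\nabla_z\Gamma'$, and then a standard subalgebra $\mf m\subseteq\mf a(\Gamma')\times\mf k$ is treated exactly as in the proof of Lemma~\ref{lem:directbk} in its ``restrictification'' form (the $p$-powers play no role, as remarked there): if $\mf m_1\subseteq\mf a(\Gamma')_1$ then $\mf m$ is a standard subalgebra of $\mf a(\Gamma')$, a RAAG Lie algebra by induction; otherwise $\mf m=\gen{x_1,\dots,x_n,x_0+z}$, and either $x_0\in\gen{x_1,\dots,x_n}$, in which case $\mf m=\gen{x_1,\dots,x_n}\times\mf k\cong\mf a(\Delta)\times\mf k$ is again the RAAG Lie algebra of a cone, or $x_0,\dots,x_n$ are linearly independent and the assignment $x_0\mapsto x_0+z$, $x_i\mapsto x_i$ $(i\geq 1)$ is an isomorphism from the standard subalgebra $\mf h=\gen{x_0,\dots,x_n}$ of $\mf a(\Gamma')$ onto $\mf m$, with inverse the projection to $\mf a(\Gamma')$. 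Since $\mf h$ is a RAAG Lie algebra by induction, in particular quadratic, it suffices to check this assignment on quadratic relations of $\mf h$, and its well-definedness then reduces to the fact that no quadratic relation of $\mf h$ carries a pure square $x_i\2$ --- which holds because the squares of linearly independent degree-one elements remain linearly independent in $\mf a(\Gamma')_2$ (the Frobenius twist preserves the rank of the coefficient matrix, by $\det(c_{ij}^2)=(\det c_{ij})^2$ in characteristic two, applied to minors). In every case $\mf m$ is a RAAG Lie algebra by the inductive hypothesis applied to $\Gamma'$.

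The step I expect to be the main obstacle is the disjoint-union case: one must show that a standard restricted subalgebra $\mf m$ of a free restricted product $\mf a(\Gamma_1)\amalg\mf a(\Gamma_2)$, where every standard subalgebra of each $\mf a(\Gamma_i)$ is a RAAG Lie algebra, is itself a RAAG Lie algebra. My plan is to prove (or quote, in the spirit of \cite{hnnLS}) a Kurosh-type subalgebra theorem in the graded restricted setting, decomposing $\mf m$ as a free restricted product of a free restricted Lie algebra with homogeneous subalgebras of the two factors; the free part is the RAAG Lie algebra of an edgeless graph, each factor-piece is a \emph{standard} subalgebra and hence a RAAG Lie algebra $\mf a(\Delta_j)$ by the induction hypothesis, and $\mf a(\Delta_1)\amalg\cdots\amalg\mf a(\Delta_r)\cong\mf a(\Delta_1\sqcup\cdots\sqcup\Delta_r)$, whence $\mf m$ is a RAAG Lie algebra. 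The delicate points are verifying that the Kurosh pieces are again degree-one-generated --- here the coherence packaged in the Droms hypothesis is genuinely used --- and that the decomposition respects the $2$-operation. Alternatively, and perhaps more cleanly, one can bypass free products: $\mf a(\Gamma)$ is the restrictification of the ordinary graph Lie algebra $\li_\Gamma$, and since an inclusion $\li_\Delta\hookrightarrow\li_\Gamma$ of subalgebras induces an injection $U(\li_\Delta)\hookrightarrow U(\li_\Gamma)$ of enveloping algebras by Poincaré--Birkhoff--Witt, a standard restricted subalgebra of $\mf a(\Gamma)$ is the restrictification of the corresponding standard subalgebra of $\li_\Gamma$; the Droms characterisation for $\li_\Gamma$ (available from \cite{sb_kosz}) then gives the result, with the obstacle now shifted to that ordinary statement.
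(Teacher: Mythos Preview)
Your proposal is correct, and in fact your closing ``alternative'' is precisely the paper's proof: the paper simply notes that $\mf a(\Gamma)$ is the restrictification of the ordinary graph Lie algebra $\li_\Gamma$, observes that $\mf a(\Gamma)$ is Bloch--Kato exactly when $\li_\Gamma$ is, and cites \cite[Ex.~4.4]{sb} (rather than \cite{sb_kosz}) for the Droms characterisation in the ordinary setting --- three lines in total. Your main inductive argument is a genuinely different, more self-contained route. The cone step is correct and is essentially the observation the paper records just after Lemma~\ref{lem:directbk} (that the same proof goes through for $\mf g\times\mf k$ when $\mf g$ is a restrictification, since then no pure squares occur among the quadratic relations); your Frobenius-rank justification for the independence of the $x_i\2$ is exactly the reason this works and is what fails in Example~\ref{ex:nonBKds}. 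For the disjoint-union step, the graded restricted Kurosh theorem you anticipate is available in \cite{sb} and is invoked by the paper in the analogous steps of the proofs for T-RAAGs over $\F_2$ and for ERAs, so your plan is sound and the standardness of the Kurosh pieces is handled there. In effect, your main argument rehearses in the simple-graph case the template the paper deploys only for the harder mixed and labelled settings, while the paper itself takes the shortcut you relegate to a parenthetical.
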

\begin{proof}
	Let $\li_\Gamma$ be the ordinary RAAG Lie algebra over $k.$ The restrictification of $\li_\Gamma$ is $\mf a(\Gamma),$ which is Bloch-Kato precisely when so is $\li_\Gamma.$ Hence, the result follows from \cite[Ex.~4.4]{sb}.
\end{proof}

\subsection{The prime field \texorpdfstring{$\F_2$}{\F_2}}


For $k=\F_2$ the prime field of characteristic $2,$ the situation is identical to that of the group theoretic one given in \cite{sb_droms}.

\begin{prop}
	If $\Gamma$ is a mixed graph and $\mf a(\Gamma)$ is the associated T-RAAG restricted Lie algebra over the prime field $\mathbb F_2,$ then the following are equivalent: 
	\begin{enumerate}
		\item $\Gamma$ is a mixed Droms graph.
		\item All standard restricted subalgebras of $\mf a(\Gamma)$ are T-RAAGs.
		\item $\mf a(\Gamma)$ is weakly Bloch-Kato, i.e., all standard restricted subalgebras of $\mf a(\Gamma)$ are quad- ratic restricted Lie algebras.
		\item $\mf a(\Gamma)$ is Bloch-Kato, i.e., all standard restricted subalgebras of $\mf a(\Gamma)$ are Koszul restricted Lie algebras.
		\item The cohomology ring of $\mf a(\Gamma)$ is universally Koszul.
	\end{enumerate}
\end{prop}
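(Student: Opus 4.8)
The plan is to prove the cycle $(1)\Rightarrow(2)\Rightarrow(3)\Rightarrow(1)$, together with $(1)\Rightarrow(4)\Rightarrow(3)$ and $(4)\Leftrightarrow(5)$, which jointly give all five equivalences. Several of these are short. The implication $(2)\Rightarrow(3)$ is immediate: a T-RAAG Lie algebra is presented by quadratic relations, hence equals its own quadratic cover. Likewise $(4)\Rightarrow(3)$ holds because Koszul restricted Lie algebras are quadratic, while $(4)\Leftrightarrow(5)$ is the restricted form of \cite[Thm.~A]{sb} recorded in \S\ref{sec:restr}. For $(3)\Rightarrow(1)$ I would argue by contraposition, exactly as in the remark following Example~\ref{ex:nonBKgraphs}: if $\Gamma$ is not a mixed Droms graph it contains, as an induced subgraph, either a non-special configuration --- to which Proposition~\ref{prop:nonspecial} applies --- or one of $\Lambda_s$, $C_4$, $P_4$ with a compatible orientation --- to which the computations of Example~\ref{ex:nonBKgraphs} apply --- and since induced subgraphs embed by Proposition~\ref{prop:ind}, the resulting non-quadratic standard subalgebra sits inside $\mf a(\Gamma)$. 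Finally, $(1)\Rightarrow(4)$ will follow from $(1)\Rightarrow(2)$ once the latter is proved in the strong form below, as the standard subalgebras exhibited there are T-RAAGs of special (indeed Droms) mixed graphs, hence Koszul by Corollary~\ref{cor:TRAAGkosz}.

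So the heart of the matter is $(1)\Rightarrow(2)$, which I would establish in the sharper form: \emph{if $\Gamma$ is a mixed Droms graph, then every standard subalgebra of $\mf a(\Gamma)$ is isomorphic, as a restricted Lie algebra, to $\mf a(\Lambda)$ for some mixed Droms graph $\Lambda$.} The proof is by induction on $\abs{V(\Gamma)}$, following the recursive description of mixed Droms graphs from \S\ref{sec:mixed}. The base case $\Gamma=\{v\}$, with $\mf a(\Gamma)=\mf k$ free abelian of rank one, is clear. If $\Gamma=\Gamma_1\sqcup\Gamma_2$, then $\mf a(\Gamma)$ is the coproduct of $\mf a(\Gamma_1)$ and $\mf a(\Gamma_2)$ in restricted Lie algebras, and I would apply a Kurosh-type description of the standard subalgebras of such a coproduct: such a subalgebra is a coproduct of standard subalgebras of the two factors with a free restricted Lie algebra, and by the inductive hypothesis together with the restricted Witt formula (\S\ref{sec:witt}, \cite{restrwitt}) each piece is a T-RAAG of a mixed Droms graph, so the coproduct is $\mf a(\Lambda)$ with $\Lambda$ the corresponding disjoint union, again mixed Droms. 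If instead $\Gamma=\nabla_\theta(\Gamma')$ with (positive, central) tip $z$, I would use the retraction $\rho\colon\mf a(\Gamma)\to\mf a(\Gamma')$, $z\mapsto 0$, from Lemma~\ref{lem:twistedRetract}. For a standard subalgebra $\mf m$, its image $\bar{\mf m}=\rho(\mf m)$ is standard in $\mf a(\Gamma')$, hence $\bar{\mf m}\cong\mf a(\Lambda')$ with $\Lambda'$ mixed Droms by induction, and one distinguishes whether $\mf m_1\cap kz$ equals $0$ or $kz$. In the first case I would verify that $\rho$ restricts to an isomorphism $\mf m\xrightarrow{\ \sim\ }\bar{\mf m}$, using that $\ad(z)$ carries each canonical generator of $\Gamma'$ either to $0$ or to the central element $z\2$, so that passing to $\mf a(\Gamma')$ neither kills nor creates a relation. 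In the second case, one picks $t=z+a\in\mf m_1$ with $a\in\mf a(\Gamma')_1$; since $\ad(t)$ and $\ad(a)$ agree on $\mf a(\Gamma')$ modulo the central line $kz\2$, and $t\2-a\2\in kz\2$ as well, the aim is to identify $\mf m$ with the T-RAAG of a suitable cone of $\Lambda'$, the signature of the new tip being dictated by how $t$ meets the negative vertices. The subcase $D(\Gamma')=\emptyset$, where the cone is of a simple graph, is already subsumed in Lemma~\ref{lem:simpleBK}.

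I expect the cone step with $D(\Gamma')\neq\emptyset$ --- the case $\mf m_1\cap kz=kz$ above --- to be the main obstacle. Here $\mf m$ is genuinely not a direct product: Example~\ref{ex:nonBKds} warns that $\mf g\times\mf k$ need not even be weakly Bloch-Kato, so one cannot be cavalier about attaching a free half-square, and it must be shown that gluing $z\2$ onto $a\2$ inside $\mf a(\Lambda')$ yields another Droms graph rather than one of the forbidden configurations. I anticipate this forces a strengthening of the inductive statement that also controls the position of the distinguished element $a$ relative to a T-RAAG presentation of $\mf n=\langle a,\ \mf m_1\cap\mf a(\Gamma')_1\rangle$ --- for instance, that the isomorphism $\mf n\cong\mf a(\Lambda')$ can be chosen to send $a$ to a vertex, or to a sum of vertices lying in a single clique. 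It is precisely at this bookkeeping that the hypothesis $k=\F_2$ is essential: over $\F_2$ every degree-one element is a sum of distinct canonical generators, so that the identity $(v_{i_1}+\dots+v_{i_r})\2=\sum_j v_{i_j}\2+\sum_{j<l}[v_{i_j},v_{i_l}]$ together with $2\,z\2=0$ pins down the new relations as being of T-RAAG type; over a field containing $\F_4$ the additional scalars destroy this, which is why there the analogous statement requires all directed edges in each connected component to share a common origin.
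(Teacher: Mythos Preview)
Your global architecture matches the paper's: the short implications are the same, and $(1)\Rightarrow(2)$ is proved by induction along the recursive description of mixed Droms graphs, with the disconnected case handled via Kurosh. The divergence is in the cone step, and your anticipated strengthening is not the one that actually does the work.

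In the cone case $\Gamma=\nabla_{\theta'}(\Gamma')$ with positive tip $v$, the paper does \emph{not} split on whether $v\in\mf b_1$, and does not try to track a distinguished element $a$ with $t=v+a$. Instead it strengthens the inductive hypothesis in a different direction: one proves that every standard subalgebra is a T-RAAG \emph{whose signature is the restriction of the ambient signature $\theta$}. Granting this for $\pi(\mf b)\subseteq\mf a(\Gamma')$, let $v_1,\dots,v_r$ be the canonical generators of $\pi(\mf b)\cong\mf a(\Lambda)$ and lift them to $x_i=v_i+\alpha_i v\in\mf b_1$ with $\alpha_i\in\F_2$. One then checks directly that $\sigma\colon v_i\mapsto v_i+\alpha_i v$ is a restricted homomorphism $\pi(\mf b)\to\mf b$: for an undirected edge both endpoints have signature $0$, hence commute with $v$; for a directed edge $\vec{v_iv_j}$ one has $\theta'(v_i)=0$, $\theta'(v_j)=1$, so $[v_i,v]=0$ and $[v,v_j]=v\2$, giving $[\sigma(v_i),\sigma(v_j)]=v_i\2+\alpha_i v\2$, while $\sigma(v_i)\2=v_i\2+\alpha_i^2 v\2$. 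The single equality $\alpha_i=\alpha_i^2$ is exactly where $k=\F_2$ enters. This yields a splitting $\mf b=(\mf b\cap\gen v)\rtimes\pi(\mf b)$, and $\mf b\cap\gen v$ is either $0$ or $\gen v$, so $\mf b$ is again a T-RAAG with the restricted signature; the induction closes.

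Two consequences for your write-up. First, your ``case $\mf m_1\cap kz=0$'' is not genuinely easier: even there the generators of $\mf m$ may carry $z$-components, and showing that $\rho\vert_{\mf m}$ is an isomorphism amounts to constructing the same section $\sigma$ and checking the same relation; knowing only that $\ad(z)$ lands in $kz\2$ is not enough without the signature information on $\pi(\mf m)$. Second, your proposed strengthening (forcing $a$ to be a vertex or a clique-sum in some chosen presentation of $\mf n$) is both harder to carry through the induction and unnecessary: the signature-compatibility hypothesis is what propagates cleanly, and it dissolves the ``main obstacle'' you flag in the final paragraph.
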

\begin{proof}
	The equivalence (4)$\iff$(5) is clear, and obviously (4) implies (3). Example \ref{ex:nonBKgraphs}, together with Proposition \ref{prop:nonspecial}, shows that (3) implies (1). Moreover, by Corollary \ref{cor:TRAAGkosz}, (2) implies (4). It remains to prove that (1) implies (2).
	The proof is based on that of the group theoretic analogue \cite{sb_droms}. Suppose that $\Gamma$ is a Droms mixed graph. We argue by induction on the number of its vertices $\abs V.$ 
	
	The case $\abs{V}=1$ is trivial, so suppose that $\abs V>1.$ If $\Gamma$ is disconnected, say  $\Gamma=\Gamma_1\sqcup \Gamma_2,$ then $\mf a(\Gamma)$ is the free product $\mf a(\Gamma)=\mf a(\Gamma_1)\amalg\mf a(\Gamma_2)$ of restricted Lie algebras. In particular, the cohomology ring of $\mf a(\Gamma)$ is the direct sum of those of the $\mf a(\Gamma_i)$'s, and hence, by induction, it is universally Koszul, proving that $\mf a(\Gamma)$ is Bloch-Kato. From the Kurosh subalgebra theorem \cite{sb}, $(3)$ follows. 
	
	Now suppose that $\Gamma$ is connected, and hence a cone $\nabla_{\theta'}(\Gamma'),$ where $\theta'$ is the signature of $\Gamma'$ and $v$ is the tip of the cone, i.e., $V=V(\Gamma')\cup \{v\}.$ The unique signature $\theta$ for $V$ is the extension of $\theta'$ with $\theta(v)=0.$
	Hence, $\mf a(\Gamma)$ is the semidirect product $\gen v\rtimes \mf a(\Gamma').$ By induction, we suppose that all the standard subalgebras $\mf c$ of $\mf a(\Gamma')$ are T-RAAG restricted Lie algebras with signature $\theta'\vert_{\mf c}.$ Let $\pi:\mf a(\Gamma)\to \mf a(\Gamma')$ be the natural projection.
	If $\mf b$ is a standard subalgebra of $\mf a(\Gamma),$ then the image $\pi(\mf b)$ in $\mf a(\Gamma')$ is a standard subalgebra of $\mf a(\Gamma'),$ and hence a T-RAAG Lie algebra by induction, i.e., there exists a mixed graph $\Lambda=(V(\Lambda),E(\Lambda),D(\Lambda),o_\Lambda,t_\Lambda)$ such that $\pi(\mf b)\simeq \mf a(\Lambda),$ with signature $\theta'\vert_{\pi(\mf b)}.$ Put  $V(\Lambda)=\{v_1,\dots,v_r\},$ and let $x_i\in \mf b$ such that $\pi(x_i)=v_i.$ 
	For all $i,$ since $\pi\vert_{\mf a(\Gamma')}$ is the identity map, there exists a scalar $\alpha_i\in \F_2$ such that $v_i+x_i=\alpha_i v.$ 
	We need to show that the assignment $\sigma(v_i)=v_i+\alpha_iv$ extends to a restricted Lie map $\pi(\mf b)\to \mf b$ giving a split exact sequence 
	\begin{equation}\label{eq:ses}
		\begin{tikzcd}
			0 \arrow{r} & \mf b \cap \gen v \arrow{r} & \mf b \arrow{r}{\pi} & \pi(\mf b) \arrow{r} \arrow[bend left=33, dashed]{l}{\sigma} & 0 .
		\end{tikzcd}
	\end{equation}
	
	Let $v_i$ and $v_j$ be two different vertices of $\Lambda.$ There are two cases to consider. 
	
	(i) If $\e{v_iv_j}\in E(\Lambda),$ then $[v_i,v_j]=0$ and $\theta'(v_i)=\theta'(v_j)=0.$ Hence, $[v_i,v]=[v_j,v]=0,$ proving that $[\sigma(v_i),\sigma(v_j)]=0.$ 
	
	(ii) If $\vec{v_iv_j}\in D(\Lambda),$ then $\theta'(v_i)=0$ and $\theta'(v_j)=1.$ 
	One has \[[v_i+\alpha_iv,v_j+\alpha_j v]=v_i\2+\alpha_i v\2.\]
	Since $\alpha_i\in \F_2,$ one has $\alpha_i^2=\alpha_i,$ and hence $[\sigma(v_i),\sigma(v_j)]=\sigma(v_i)\2.$
	
	Therefore, $\sigma:\pi(\mf b)\to \mf b$ is a homomorphism, and $\pi\sigma$ is the identity on $\pi(\mf b),$ proving that the exact sequence (\ref{eq:ses}) splits. We deduce that $\mf b=(\mf b\cap \gen v)\rtimes \pi(\mf b)$ is a T-RAAG Lie algebra with canonical generators $x_i$ ($i=1,\dots,r$), and possibly $x\in \mf b_1\cap \gen v.$ 
	Finally, the map $\tilde\theta(x_i)=\theta'(v_i),$ $\tilde\theta(x)=0,$ for $x\in \mf b_1\cap \gen v,$ is a signature for the defining graph of $\mf b$ and is the restriction of $\theta.$ Since $\mf b$ is a T-RAAG Lie algebra, we deduce that $\mf a(\Gamma)$ is Bloch-Kato, and hence its cohomology ring is universally Koszul.
\end{proof}



	
	\subsection{Non-prime field}
	
	In the following example, we show an unexpected behaviour of T-RAAGs over larger fields: if $k\neq \F_2,$ then the T-RAAG Lie algebra of a particular Droms graph with $3$ vertices is not Bloch-Kato, and its cohomology is not universally Koszul. This is essentially due to the trivial fact that the map $\mf a\to\mf a$ of a torsion-free, abelian, restricted Lie algebra $\mf a$ over a field $k$ sending each element to its $2$-power is not linear, unless $k=\F_2.$ Indeed, if $\alpha\in k\setminus \F_2,$ and $x\in\mf a$ is non-zero, then $(\alpha x)\2=\alpha^2 x\2\neq \alpha x\2.$
	\begin{exam}\label{ex:F4}Let $k$ be a field of characteristic two with at least $4$ elements, and let $\alpha\in k$ such that $\alpha^2\neq 0.$
		Consider the special graph $e$ consisting of two adjacent vertices $v_1,v_2,$ where $V_+=\{v_1\},$ and $\Gamma=\nabla_\theta(e)$; then the associated T-RAAG restricted Lie algebra over $k$ is \[\mf a=\pres{v,v_1,v_2}{[v_1,v],[v_2,v_1]+v_1\2,[v_2,v]+v\2}.\]
		
		Let $\mf b$ be its standard subalgebra generated by $v_1+\alpha v$ and $v_2.$ 
		Because,   $[v_2,v_1+\alpha v]=v_1\2+\alpha v\2$ and $(v_1+\alpha v)\2=v_1\2+\alpha^2v\2$ are linearly independent, one has $\mf b_2=k\cdot v\2\oplus k\cdot v_1\2\oplus k\cdot v_2\2=\mf a_2.$ Therefore, $\mf b_2$ has the same dimension as the degree-$2$ component of the free restricted Lie algebra on two generators, and we deduce that $\q\mf b$ is free. Nevertheless, the equality $[v_2,(v_1+\alpha v)\2]=0$ implies that $\mf b$ cannot be free, and $\q\mf b\neq \mf b.$ Notice that, since $(v_1+\alpha v)\2=v_1\2+\alpha ^2v\2,$ and $[v_1,+\alpha v,v_2]=v_1\2+\alpha v\2,$ one has $\mf b\cap \gen v=\gen{v\2},$ i.e., it is zero in degree $1$ and contains $v\2.$
		
		Since $\mf a$ is Koszul, its cohomology is the associative algebra with presentation \[H^\bullet(\mf a,k)=\pres{a,b,c}{ab+ba,ac+ca,bc+cb,c^2,ac+a^2,bc+b^2}_\text{alg},\]
		where $(a,b,c)$ is the dual basis to the basis $(v,v_1,v_2).$ Although we already know that $H^\bullet(\mf a,k)$ is not universally Koszul (since $\mf a$ is not Bloch-Kato), we prove this explicitly. 
		Consider the ideal $I$ generated by the element $x=\alpha a+b,$ i.e., $I$ is the ideal generated by the elements $f\in H^1(\mf a,k)=\mf a_1^\ast$ such that $f\vert_{\mf b_1}\equiv0.$  
		
		\noindent The elements $xa,xb,xc$ are linearly independent, whence $I$ has no relation of degree $2$ as an $H^\bullet(\mf a,k)$-module. Nevertheless, $x\cdot(\alpha(1+\alpha)^{-1}ab+b^2)=0,$ and hence $\tor_{1,3}^A(I,k)\neq 0,$ which means that $I$ is not a quadratic $A$-module (of degree $1$).
	\end{exam}

	\begin{lem}\label{lem:single-}
		Let $\Gamma'$ be a simple Droms graph, and let $Y$ be a mixed graph consisting of a single vertex $y,$ which is a negative vertex not lying in $\Gamma'.$ Consider the special cone $\Gamma=\nabla_\theta(\Gamma'\sqcup Y)$ on the mixed graph $\Gamma'\sqcup Y,$ where $\theta:V(\Gamma'\sqcup Y)\to k$ is the signature with $\theta(y)=1$ and $\theta\vert_{V(\Gamma')}\equiv 0.$ Then, the T-RAAG Lie algebra $\mf a(\Gamma)$ over $k$ is Bloch-Kato. 
	\end{lem}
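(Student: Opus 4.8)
The plan is to realise $\mf a(\Gamma)$ as an HNN--extension of a RAAG Lie algebra and then to show that every standard subalgebra of it is either a free product or an HNN--extension of a Koszul RAAG Lie algebra over a free abelian restricted subalgebra of rank one; Koszulity will then come from Theorem \ref{thm:hnn} (resp.\ from the behaviour of cohomology under free products). Concretely, let $z$ be the tip of the cone, so $V(\Gamma)=V(\Gamma')\cup\{z,y\}$, the only directed edge is $\vec{zy}$, and the only edge meeting $y$ is $\{z,y\}$. Put $\mf d=\mf a(\nabla(\Gamma'))$: as a simple graph $\nabla(\Gamma')$ is the join $\Gamma'\ast\{z\}$, so $\mf d\cong\mf a(\Gamma')\times\gen z$ with $\gen z\cong\mf k$ free abelian of rank one and each $z^{[2]^e}$ central in $\mf d$; moreover $\nabla(\Gamma')$ is a simple Droms graph, so by Lemma \ref{lem:simpleBK} every standard subalgebra of $\mf d$ is a RAAG Lie algebra, hence Koszul by Corollary \ref{cor:TRAAGkosz}. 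Running the HNN--decomposition along the negative vertex $y$, exactly as in the proofs of Proposition \ref{prop:ind} and Corollary \ref{cor:TRAAGkosz}, gives $\mf a(\Gamma)=\hnn_\phi(\mf d,y)$ with $\phi\colon\gen z\to\mf d$ the degree--one derivation $\phi(z)=z\2$ (and $\phi(z^{[2]^e})=0$ for $e\ge1$).

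Now let $\mf m$ be a standard subalgebra of $\mf a(\Gamma)$. If $\mf m\subseteq\mf d$ we are done by the previous paragraph. Otherwise some generator has nonzero $y$--component; since $z$ is central in $\mf d$, subtracting a multiple of one such generator from the others clears their $y$--components, so $\mf m=\gen{w+y,x_1,\dots,x_n}$ with $w,x_i\in\mf d_1$. Again because $z$ is central, the assignment $y\mapsto y+w$ (identity on $\mf d$) satisfies $\ad(y+w)|_{\gen z}=\phi$, hence extends to an involutive automorphism $\Psi$ of $\mf a(\Gamma)$ by the universal property of the HNN--extension; replacing $\mf m$ by $\Psi(\mf m)$ we may assume $\mf m=\gen{y,x_1,\dots,x_n}$ with the $x_i\in\mf d_1$ linearly independent. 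Let $\mf n=\gen{x_1,\dots,x_n}\le\mf d$, a standard subalgebra, hence a Koszul RAAG Lie algebra, and write $x_i=c_i+\mu_i z$ with $c_i\in\mf a(\Gamma')_1$, $\mu_i\in k$.

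The heart of the argument is the claim that $\mf n\cap\gen z$ is $0$ when the $c_i$ are linearly independent and all of $\gen z$ otherwise. If the $c_i$ are dependent, a nonzero $k$--combination of the $x_i$ lies in $kz$, so $z\in\mf n_1$ and $\gen z\subseteq\mf n$. If they are independent, then since $\mf n$ is graded and $z$ is central, expanding $x_i^{[2]^e}$ and the iterated brackets of the $x_i$ and comparing the coefficient of a basis element $v_a^{[2]^e}$ of $\mf a(\Gamma')_{2^e}$ shows that $z^{[2]^e}\in\mf n$ would require scalars $\lambda_i$ with $\sum_i\lambda_i\mu_i^{2^e}=1$ and $\sum_i\lambda_i c_{ia}^{2^e}=0$ for all $a$ (where $c_i=\sum_a c_{ia}v_a$); but the entrywise $2^e$--power map preserves linear independence over a field of characteristic two (pass to the perfect closure and use $\sum t_i^{2^e}=(\sum t_i)^{2^e}$), so the last equations force $\lambda=0$, a contradiction, whence $\mf n\cap\gen z=0$. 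In the first case, the structure theory of HNN--extensions \cite{hnnLS} shows that $y$ is free over $\mf n$, so $\mf m=\mf n\amalg\gen y$ is a free product of Koszul restricted Lie algebras, whose cohomology ring splits accordingly (as in the disconnected case treated earlier, cf.\ \cite{sb}), so $\mf m$ is Koszul. In the second case $\mf h:=\gen z\subseteq\mf n$ is $\phi$--stable and $\mf m=\hnn_{\phi|_{\mf h}}(\mf n,y)$; since $\mf n$ is Koszul and $\mf h\cong\mf k$ is generated in degree one (so $H^{ij}(\mf h)=0$ for $i\ne j$), the long exact sequence of Theorem \ref{thm:hnn} forces $H^{ij}(\mf m)=0$ for $i\ne j$, exactly by the argument of \cite[Lem.\ 2.2]{sb_kosz} used in Corollary \ref{cor:TRAAGkosz}. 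Either way $\mf m$ is Koszul.

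The step I expect to be hardest is making the two identifications $\mf m=\mf n\amalg\gen y$ and $\mf m=\hnn_{\phi|_{\mf h}}(\mf n,y)$ precise: a priori one has only surjections onto $\mf m$ from these algebras, and injectivity is a relative embedding statement for HNN--extensions of restricted Lie algebras — the Lie-theoretic counterpart of the Bass--Serre fact that $\gen{H,t}$ is the fundamental group of the graph of groups induced on a subgroup containing the relevant associated subgroup. I expect this to follow from the normal--form results of \cite{hnnLS}, but it is the delicate point. A secondary point, where the hypothesis $\theta|_{\Gamma'}\equiv0$ (equivalently: $\Gamma$ has exactly one directed edge and one negative vertex) is genuinely used, is the rank computation excluding $z^{[2]^e}\in\mf n$ with $e\ge1$ when the $c_i$ are independent; without it $\mf h$ could be generated only in degree $>1$, so that it would fail to be Koszul and the cohomological argument above would break.
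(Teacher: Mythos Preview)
Your approach differs substantially from the paper's, and while it is plausibly salvageable, it is considerably more intricate and the gaps you flag are genuine.

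The paper projects along the \emph{positive} central vertex $z$ (the cone tip), not along $y$. By Lemma~\ref{lem:twistedRetract}, $\mf a(\Gamma_z)=\mf a(\Gamma'\sqcup Y)=\mf a(\Gamma')\amalg\gen y$ is a retract of $\mf a(\Gamma)$; call the retraction $\pi$. Given a standard subalgebra $\mf m\not\subseteq\mf a(\Gamma_z)$, one writes $\mf m=\gen{z+u_0,u_1,\dots,u_n}$ with $u_i\in\mf a(\Gamma_z)_1$, and arranges (Gauss elimination) that $y$ occurs in exactly one $u_{i_0}$. The image $\mf h=\pi(\mf m)=\gen{u_0,\dots,u_n}$ is a RAAG, since $\Gamma'\sqcup Y$ is simple Droms. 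The paper then exhibits an explicit section $f\colon\mf h\to\mf m$, $u_0\mapsto z+u_0$, $u_i\mapsto u_i$ ($i>0$), and checks well-definedness directly: Kurosh gives $\mf h=\gen{u_{i_0}}\amalg\gen{u_j:j\ne i_0}$, so when $i_0=0$ the universal property of the free product suffices, and when $i_0>0$ the only relations to verify involve $u_0$ and some $u_j\in\mf a(\Gamma')_1$ ($j\ne i_0$), where $[z,u_j]=0$ settles them. Since $\pi f=\mathrm{id}$, one concludes $\mf m\cong\mf h$ is itself a RAAG. No HNN subalgebra theorem is invoked.

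Your route via the HNN decomposition at $y$ is not incorrect in spirit, but the two embedding statements you isolate---$\mf m=\mf n\amalg\gen y$ when $\mf n\cap\gen z=0$, and $\mf m=\hnn_{\phi}(\mf n,y)$ when $\gen z\subseteq\mf n$---genuinely require a relative normal-form result extracted from \cite{hnnLS}; this is work the paper's argument bypasses entirely. Your computation of $\mf n\cap\gen z$ is correct but can be streamlined: since $\Gamma'$ is Droms, $\mf n':=\gen{c_1,\dots,c_n}\le\mf a(\Gamma')$ is quadratic, and your Frobenius-rank argument in degree~$2$ already shows its quadratic relations involve no $[2]$-powers, so $c_i\mapsto x_i$ defines a section of $\rho\vert_{\mf n}$; injectivity of $\rho\vert_{\mf n}$ follows, with no separate degree-$2^e$ analysis needed. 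What your approach buys is a direct Koszulity proof via Theorem~\ref{thm:hnn}; what the paper's buys is that, once the right projection is chosen, every standard subalgebra is \emph{visibly} a RAAG, and the embedding questions dissolve.
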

	\begin{proof}
		By Lemma \ref{lem:simpleBK}, all the standard subalgebras of $\mf a(\Gamma_v)$ are T-RAAGs, where $v$ is the tip of the cone $\Gamma=\nabla(\Gamma_v),$ and $\Gamma_v=\Gamma'\sqcup Y.$
		
		Let $\mf m$ be a standard subalgebra of $\mf a(\Gamma).$ If $\mf m$ is contained in $\mf a(\Gamma_v),$ then $\mf m$ is a T-RAAG Lie algebra. So, assume that $v$ appears with a non-trivial coefficient in the linear expression of some element of degree $1$ of $\mf m.$ By Lemma \ref{lem:twistedRetract}, $\mf a(\Gamma_v)$ is the quotient $\mf a(\Gamma)/(v)$; let $\pi$ be the corresponding projection.
		
		We can choose the generators $v+u_0,u_1,\dots,u_n$ of $\mf m$ so that the $u_i$'s are elements of $\mf a(\Gamma_v).$ Consider the Lie subalgebra $\mf h=\gen{u_0,\dots,u_n}$ of the restricted RAAG Lie algebra $\mf a(\Gamma_v),$ i.e., $\mf h=\pi(\mf m).$ Clearly, if $u_0$ lies in the linear span of the $u_i$'s for $i\geq 1,$ then $\mf m\simeq \mf h\times \mf k.$ We can thus focus on the contrary. 
		
		Since Droms theorem holds true for restricted RAAG Lie algebras over arbitrary fields (Lemma \ref{lem:simpleBK}), $\mf h$ is a restricted RAAG Lie algebra. Moreover, up to a Gauss elimination procedure, we can assume that $y$ only appears in one of the $u_i$'s, say $u_{i_0}.$ In particular, by Kurosh theorem \cite{sb}, $\mf h$ is the free product of the free restricted Lie algebra generated by $u_{i_0}$ and its subalgebra $\gen{u_0,\dots,\hat{u}_{i_0},\dots,u_n},$ where $\hat{u}_{i_0}$ means that the generator $u_{i_0}$ is omitted.
		
		
		Consider the map $f:\mf h\to \mf m$ given by $f(u_0)=v+u_{0}$ and $f(u_i)=u_i$ for $i>0.$ We need to check well-definedness. After that, notice that $f$ is the inverse of $\pi\vert_{\mf m},$ i.e., $\mf h\simeq \mf m.$ 
		
		If $i_0=0,$ then $\mf h=\gen{u_0}\amalg \gen{u_1,\dots,u_n},$ and $f$ is well-defined by the universal property of the free product. 
		
		If $i_0>0,$ then $f$ is well-defined as $[v,u_i]=0$ for all $i\neq i_0,$ and no relation involves $u_{i_0}.$ 
	\end{proof}
	
	\begin{cor}
		Let $k\neq \F_2$ be a field of characteristic two, and let $\Gamma$ be a mixed graph. Then the T-RAAG Lie algebra $\mf a(\Gamma)$ over $k$ is Bloch-Kato if and only if $\Gamma$ is a Droms mixed graph, and all the directed edges in any connected component of $\Gamma$ have a common origin.
	\end{cor}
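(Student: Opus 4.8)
The plan is to prove both implications: the forward one by producing forbidden induced subgraphs, and the backward one by induction on $|V(\Gamma)|$, with the connected case reduced to Lemma~\ref{lem:single-} through a change of generators. For the forward direction, assume $\mf a(\Gamma)$ is BK, hence weakly BK. Then $\Gamma$ must be special by Proposition~\ref{prop:nonspecial}, and then a mixed Droms graph, since a special non-Droms graph contains one of the induced subgraphs $\Lambda_s$, $C_4$, $P_4$ of Example~\ref{ex:nonBKgraphs}, each of which forces a non-quadratic standard subalgebra. It remains to rule out two directed edges $\vec{ab},\vec{cd}$ with $a\ne c$ inside a single component $\Gamma_i$. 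Here I would use that a connected mixed Droms graph is an iterated cone \cite[Prop.~2.14]{sb_qw}: writing $\Gamma_i=\nabla_{\theta'}(\Gamma_i')$ with central positive tip $z$, one has $\vec{zb},\vec{zd}\in D$ because $z$ is joined by a directed edge to every negative vertex. Since origins of directed edges in a special graph are positive (otherwise the edge would be forced both undirected and directed into its origin) and $z$ is positive, the edge joining $z$ to any such origin is undirected; as $a,c$ are not both $z$, say $a\ne z$, so $\{z,a,b\}$ spans an induced copy of the Example~\ref{ex:F4} graph. Its T-RAAG embeds as a standard subalgebra of $\mf a(\Gamma)$ by Proposition~\ref{prop:ind}, and by Example~\ref{ex:F4} it contains a non-quadratic standard subalgebra over any $k\supseteq\F_4$ — contradicting weak BK.

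For the converse, assume $\Gamma$ is mixed Droms with the common-origin property and induct on $|V(\Gamma)|$, the one-vertex case being trivial. If $\Gamma$ is disconnected, $\mf a(\Gamma)$ is the free product of the T-RAAGs of its components, each of which is BK by induction; then, exactly as in the $\F_2$ case, the cohomology ring of $\mf a(\Gamma)$ is the direct sum of those of the factors, hence universally Koszul, and the Kurosh subalgebra theorem \cite{sb} upgrades this to BK. If $\Gamma$ is connected with no directed edge it is a simple Droms graph, and Lemma~\ref{lem:simpleBK} applies.

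The remaining, crucial case is $\Gamma$ connected with at least one directed edge. Writing $\Gamma=\nabla_{\theta}(\Gamma_0)$ with central positive tip $z$, the common-origin hypothesis forces $\Gamma_0$ to carry no directed edge (any would have origin in $\Gamma_0$, not $z$), and then specialness of $\Gamma$ forces every negative vertex of $\Gamma_0$ to be isolated there; hence $\Gamma_0=\Gamma'\sqcup\{y_1,\dots,y_r\}$ with $\Gamma'$ a simple Droms graph and $y_1,\dots,y_r$ ($r\ge1$) isolated negative vertices. I would then perform the substitution $w_i:=y_i+y_1$ for $i=2,\dots,r$: since $[z,y_i]=z\2=[z,y_1]$, this exhibits $\mf a(\Gamma)$ as isomorphic to $\mf a(\Gamma'')$, where $\Gamma''$ is the cone over $(\Gamma'\sqcup\{w_2,\dots,w_r\})\sqcup\{y_1\}$ — a simple Droms graph together with a single isolated negative vertex (the mechanism of Example~\ref{ex:nonrigid}, with $\Gamma''=\Gamma$ when $r=1$). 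This is exactly the configuration of Lemma~\ref{lem:single-}, which yields that $\mf a(\Gamma'')\cong\mf a(\Gamma)$ is BK, closing the induction.

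The main obstacle is the reduction in the last paragraph: the common-origin hypothesis is precisely what makes the substitution $y_i\mapsto y_i+y_1$ linear over an arbitrary field — over $k\ne\F_2$ the map $x\mapsto x\2$ is not additive, which is the source of the bad example — and what lets all but one negative vertex be absorbed into undirected edges so as to land in the hypotheses of Lemma~\ref{lem:single-}; without it no such collapse is available, and Example~\ref{ex:F4} shows the conclusion genuinely fails. Beyond this, I would need to verify the two routine structural facts used above (connected mixed Droms graphs are cones, and specialness confines the negative vertices of $\Gamma_0$ to isolated ones) and check at the level of presentations that the displayed substitution and its evident inverse are well defined, which proceeds just as in the proofs of Lemma~\ref{lem:twistedRetract} and Example~\ref{ex:nonrigid}.
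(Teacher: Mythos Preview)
Your proposal is correct and follows essentially the same route as the paper. The only noticeable difference is in the forward direction: the paper argues by a case split on whether the two termini $t(e_1),t(e_2)$ coincide (finding either $\Lambda_s$ or the Example~\ref{ex:F4} triangle in one case, and using a positive central vertex outside $e_2$ in the other), whereas you go straight through the cone tip $z$ to exhibit the induced Example~\ref{ex:F4} triangle $\{z,a,b\}$ for any $\vec{ab}$ with $a\neq z$ --- a mild streamlining of the same idea. The backward direction (disconnected case via free products, connected case reduced to Lemma~\ref{lem:single-} through the Example~\ref{ex:nonrigid} substitution collapsing all but one negative vertex) matches the paper's argument essentially verbatim.
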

	\begin{proof}
		First, suppose that $\mf a(\Gamma)$ is Bloch-Kato. Then $\Gamma$ is a Droms mixed graph by Proposition \ref{prop:nonspecial} and Example \ref{ex:nonBKgraphs}.
		
		Let us assume, without loss of generality, that $\Gamma$ is connected and has two different directed edges $e_1,e_2\in D.$ If, by contradiction $o(e_1)\neq o(e_2),$ then there are two cases to consider. 
		
		(1) If $t(e_1)=t(e_2),$ then either $\Lambda_s$ or the graph of Example \ref{ex:F4} is induced in $\Gamma,$ proving that $\mf a(\Gamma)$ is not Bloch-Kato.
		
		(2) If $t(e_1)\neq t(e_2),$ then $e_1\cap e_2=\emptyset.$ Moreover, there exists a positive vertex $v$ that is adjacent to all the other vertices of $\Gamma.$ We may assume that $v\notin e_2,$ so that the induced subgraph spanned by $v$ and $e_2$ is the graph described in Example \ref{ex:F4}, contradicting the assumption on $\mf a(\Gamma).$ 
		
		Now assume that $\Gamma$ is a Droms mixed graph, and all directed edges in any connected component have a common origin. We argue by induction as usual. If $\Gamma$ is disconnected, by induction, $\mf a(\Gamma)$ is a free product of Bloch-Kato Lie algebras, and hence it is Bloch-Kato. 
		
		We can thus suppose that $\Gamma$ is connected, and hence there is a positive vertex $v$ that is central in $\Gamma.$ Clearly, if $D\neq \emptyset,$ then $v$ is the common origin of all the edges $e\in D.$ Since $\Gamma$ is a special mixed graph, the graph $\Gamma_v$ induced by the vertices $\neq v$ is a disjoint union of a simple Droms graph and a discrete set of negative vertices, i.e., $\Gamma$ is obtained by attaching a star $\vec \Sigma_{n+1}$ at $v$ to the subgraph spanned by all the positive vertices. In view of Example \ref{ex:nonrigid}, we can assume that $\Gamma$ has a single negative vertex, and hence, by induction and Lemma \ref{lem:single-}, we deduce that $\mf a(\Gamma)$ is Bloch-Kato.
	\end{proof}
	
	\begin{exam}
		The graph $\Gamma$ with realization	\begin{figure}[H]
			\centering
			\begin{tikzpicture}
				\draw[fill=black] (0,0) circle (1pt);
				\draw[fill=black] (0,2) circle (1pt);
				\draw[fill=black] (1.5,1) circle (1pt);
				\draw[fill=black] (3.4,0.5) circle (1pt);
				\draw[fill=black] (3.8,0) circle (1pt);
				\draw[fill=black] (4.3,1.5) circle (1pt);
				\draw[fill=black] (4,2) circle (1pt);
				\draw[fill=black] (5.5,1) circle (1pt);

				\draw[thick] (0,0) -- (0, 2);
				\draw[thick] (0,0) -- (1.5,1);
				\draw[thick] (1.5,1) -- (3.8,0);
				\draw[thick] (1.5,1) -- (0,2);
				\draw[thick] (1.5,1) -- (4,2);
				\draw[thick] (3.4,0.5) -- (4,2);
				\draw[thick] (5.5,1) -- (4,2);
				\draw[thick] (5.5,1) -- (3.8,0);
				\draw[thick] (5.5,1) -- (1.5,1);
				\draw[thick] (3.4,0.5) -- (1.6,1);
				\draw[thick] (3.4,0.5) -- (5.5,1);
				\draw[thick] (3.4,0.5) -- (3.8,0);
				
				
				
				\draw[fill=black] (7.5,4) circle (1pt);
				\draw[fill=black] (9,4) circle (1pt);
				\draw[fill=black] (8.5,5) circle (1pt);
				\draw[fill=black] (6,4) circle (1pt);
				\draw[fill=black] (6.5,5) circle (1pt);
				
				\draw[thick, mid arrow] (7.5,4) -- (9,4);
				\draw[thick, mid arrow] (7.5,4) -- (8.5,5);
				\draw[thick, mid arrow] (7.5,4) -- (6,4);
				\draw[thick, mid arrow] (7.5,4) -- (6.5,5);
				
				\draw[fill=black] (10,1) circle (1pt);
				\draw[fill=black] (10,2) circle (1pt);
				\draw[fill=black] (9,0) circle (1pt);
				\draw[fill=black] (11,0) circle (1pt);
				
				\draw[thick] (10,1)--(10,2)--(9,0)--(11,0);
				\draw[thick] (9,0)--(10,1)--(11,0)--(10,2);

				\draw[line width=0.2pt] (7.5,4) -- (0,0);
				\draw[line width=0.2pt] (7.5,4) -- (0,2);
				\draw[line width=0.2pt] (7.5,4) -- (1.5,1);
				\draw[line width=0.2pt] (7.5,4) -- (3.4,0.5);
				\draw[line width=0.2pt] (7.5,4) -- (3.8,0);
				\draw[line width=0.2pt] (7.5,4) -- (4.3,1.5);
				\draw[line width=0.2pt] (7.5,4) -- (4,2);
				\draw[line width=0.2pt] (7.5,4) -- (5.5,1);
				\draw[line width=0.2pt] (7.5,4) -- (9,4);
				\draw[line width=0.2pt] (7.5,4) -- (8.5,5);
				\draw[line width=0.2pt] (7.5,4) -- (6,4);
				\draw[line width=0.2pt] (7.5,4) -- (6.5,5);
				\draw[line width=0.2pt] (7.5,4) -- (10,1);
				\draw[line width=0.2pt] (7.5,4) -- (10,2);
				\draw[line width=0.2pt] (7.5,4) -- (9,0);
				\draw[line width=0.2pt] (7.5,4) -- (11,0);
				
			\end{tikzpicture}
		\end{figure}
		\noindent is obtained by taking the cone of the disjoint union of two simple Droms graphs (the fish-shaped graph $\Phi,$ and the complete graph $K_4$ on $4$ vertices), and then attaching a directed star $\vec \Sigma_4$ to the tip of the cone, i.e., $\Gamma=\nabla(\Phi\sqcup K_4\sqcup 4N),$ where $4N$ is the discrete mixed graph consisting of four negative vertices. The associated T-RAAG Lie algebra over a field of characteristic two is Bloch-Kato, and all of its standard subalgebras are T-RAAGs.
	\end{exam}

	\section{RACG Lie algebras}
	For a simple graph $\Gamma=(V,E)$ we define the \textbf{right-angled Coxeter} \textbf{Lie algebra}, or RACG Lie algebra:
	\[\mf c(\Gamma)=\pres{V}{u\2,\ [v,w]:\ u\in V,\ \{v,w\}\in E}.\]
	
	It can be seen as the quotient of the T-RAAG Lie algebra on the mixed graph  $\Gamma=(V,E,D,o,t)$ with $D=\emptyset,$ with respect to the restricted ideal generated by $v\2,$ for all canonical generators $v.$ The ordinary Lie subalgebra of $\mf c(\Gamma)$ generated by its canonical generators is the ordinary RAAG Lie algebra $\li_\Gamma.$
	
	\begin{exam}
		Let $\Gamma$ be complete. Then, $\mf c(\Gamma)=\pres{v_1,\dots,v_n}{v_i\2,[v_i,v_j]}$ is elementary abelian, and hence $u(\mf c(\Gamma))$ is the exterior algebra on an $n$-dimensional vector space. In particular, $\mf c(\Gamma)$ is Koszul, and its cohomology is the symmetric algebra on the dual vector space $c(\Gamma)_1^\ast.$ Notice that the Chevalley-Eilenberg cohomology of the underlying ordinary Lie algebra of $\mf c(\Gamma)$ is the exterior algebra on $\mf c(\Gamma)^\ast.$
	\end{exam}

	It follows from Lemma \ref{lem:directbk} that if $\mf c(\Gamma)$ is Bloch-Kato, then so is $\mf c(\nabla(\Gamma))\simeq \mf c(\Gamma)\times \mf k/\mf k\2.$ 
	
	For RACGs, the Bloch-Kato property is not equivalent to the fact that all standard subalgebras are RACG. For instance, if $\Gamma$ is discrete with two vertices, then the subalgebra generated by the sum of the canonical generators is not elementary abelian, as it should be if it were a RACG (on a singleton graph). 
	For this we introduce a new class of restricted Lie algebras interpolating between RAAG and RACG Lie algebras.
	
	If $\Gamma$ is a graph, and $\theta:V\to \F_2$ is a vertex labelling of $\Gamma,$ the \textbf{extended right-angled} Lie algebra (or ERA) over $k$ associated to the pair $(\Gamma,\theta)$ is the restricted Lie $k$-algebra with presentation \[\mf e(\Gamma,\theta)=\pres{V}{\theta(v)v\2, [v_1,v_2]:\ v\in V,\ \{v_1,v_2\}\in E}.\]
	
	For instance, if $\theta\equiv 1\in \F_2,$ then $\mf e(\Gamma,1)=\mf c(\Gamma).$ On the contrary, when $\theta\equiv 0,$ then $\mf e(\Gamma,0)$ is the (restricted) RAAG Lie algebra $\mf a(\Gamma).$

	\begin{prop}\label{prop:ERAembed}
		Let $\Gamma=(V,E)$ be a simple graph and $\theta:V\to \F_2$ a $2$-labelling of $\Gamma.$ Let $\mf e(\Gamma)$ be the associated ERA Lie $k$-algebra.
		\begin{enumerate}
			\item If $W\subset V,$ then the subalgebra of $\mf e(\Gamma)$ generated by $W$ is the ERA Lie algebra associated to the induced subgraph spanned by $W,$ with signature $\theta\vert_W.$ 
			\item The ERA Lie algebra $\mf e(\Gamma)$ is Koszul.
		\end{enumerate} 
	\end{prop}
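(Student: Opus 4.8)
The first assertion is a straightforward retract argument, in the spirit of Proposition~\ref{prop:ind} and Lemma~\ref{lem:twistedRetract}. Write $\Lambda$ for the induced subgraph spanned by $W$, carrying the restricted labelling $\theta|_{W}$. Every defining relation of $\mf e(\Lambda)$ --- an element $\theta(w)w\2$ with $w\in W$, or a commutator $[w_1,w_2]$ with $w_1,w_2\in W$ and $\{w_1,w_2\}$ an edge, hence an edge of $\Lambda$ since the latter is induced --- is also a defining relation of $\mf e(\Gamma)$, so $w\mapsto w$ defines a restricted homomorphism $\iota\colon\mf e(\Lambda)\to\mf e(\Gamma)$, whose image is the restricted subalgebra generated by $W$. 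Conversely, $\sigma(v)=0$ for $v\in V\setminus W$ and $\sigma(w)=w$ for $w\in W$ defines a restricted homomorphism $\sigma\colon\mf e(\Gamma)\to\mf e(\Lambda)$, since a generator of the relation module of $\mf e(\Gamma)$ is sent either to zero (a square $\theta(v)v\2$ with $v\notin W$, or a commutator one of whose entries lies outside $W$) or to the corresponding defining relation of $\mf e(\Lambda)$. As $\sigma\iota=\mathrm{id}$, the map $\iota$ is injective, which proves~(1); I record for later that consequently the restriction $\iota^{*}\colon H^{\bu}(\mf e(\Gamma))\to H^{\bu}(\mf e(\Lambda))$ is surjective.

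I would prove~(2) by induction on $\abs V$. If $\abs V\le 1$ then $\mf e(\Gamma)$ is $\mf k$ or $\mf k/\mf k\2$, both Koszul by Example~\ref{ex:1dim}. Let $\abs V>1$. If $\Gamma=\Gamma_1\sqcup\Gamma_2$ is disconnected, then $\mf e(\Gamma)=\mf e(\Gamma_1)\amalg\mf e(\Gamma_2)$ is a free product of restricted Lie algebras; as the $\ext$-algebra of a free product is the fibre product over $k$ of the $\ext$-algebras of the factors, which are Koszul by induction, $\mf e(\Gamma)$ is Koszul. If $\Gamma$ is connected with a central vertex $v$, then $[v,w]=0$ for every $w$, so $\gen v$ is a central direct factor --- free abelian when $\theta(v)=0$, elementary abelian when $\theta(v)=1$ --- and $\mf e(\Gamma)=\mf e(\Gamma_v)\times\gen v$; since $u(\mf g_1\times\mf g_2)\cong u(\mf g_1)\otimes u(\mf g_2)$ and a tensor product of Koszul algebras is Koszul, $\mf e(\Gamma)$ is Koszul.

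Finally, let $\Gamma$ be connected with no central vertex, fix any vertex $v$, and let $\Gamma_v$ be the induced subgraph on $V\setminus\{v\}$ and $L$ that on the neighbourhood $N(v)$, a proper subset of $V\setminus\{v\}$. Both $\mf e(\Gamma_v)$ and $\mf e(L)$ are ERA Lie algebras on fewer vertices, hence Koszul by induction, and by~(1) the subalgebra of $\mf e(\Gamma_v)$ generated by $N(v)$ is $\mf e(L)$, realised as a retract. Comparing presentations: if $\theta(v)=0$, then $\mf e(\Gamma)=\hnn_{0}(\mf e(\Gamma_v),v)$, the HNN-extension of $\mf e(\Gamma_v)$ with respect to the zero derivation --- regarded as a degree-$1$ derivation --- of the subalgebra $\mf e(L)$; Theorem~\ref{thm:hnn} then yields a long exact sequence in which the off-diagonal groups of $\mf e(\Gamma_v)$ and of $\mf e(L)$ (the latter shifted by one in internal degree) all vanish, so the standard diagonal-vanishing chase of \cite[Lem.~2.2]{sb_kosz} forces $H^{ij}(\mf e(\Gamma))=0$ for $i\ne j$. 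If $\theta(v)=1$, the same vertex removal exhibits $\mf e(\Gamma)$ as the amalgamated free product $\mf e(\Gamma_v)\amalg_{\mf e(L)}\bigl(\mf e(L)\times\gen v\bigr)$ with $\gen v\cong\mf k/\mf k\2$; the module-theoretic methods of \cite{cmp,hnnLS} supply the associated Mayer--Vietoris long exact sequence of restricted cohomology, and since its three corners are Koszul (the middle one by the K\"unneth argument above) and the restriction $H^{\bu}(\mf e(\Gamma_v))\to H^{\bu}(\mf e(L))$ is surjective by~(1), a diagram chase again annihilates the off-diagonal cohomology of $\mf e(\Gamma)$. I expect the $\theta(v)=1$ case to be the main obstacle: it requires either the amalgamated-product analogue of Theorem~\ref{thm:hnn} for graded restricted Lie algebras of characteristic two (obtainable by the arguments of \cite{cmp,hnnLS}), or a reduction to the HNN case via an analysis of the quotient of $\hnn_0(\mf e(\Gamma_v),v)$ by the ideal generated by $v\2$ as a free right $u(\mf e(\Gamma_v))$-module.
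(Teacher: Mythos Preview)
Your proof of~(1) is the paper's: the retraction sending $v\notin W$ to $0$ and $w\in W$ to $w$ shows that $\iota$ is a split monomorphism.

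For~(2) both arguments are by induction on $\abs{V}$, and the treatment of a central vertex via a direct product is the same. The divergence is at the step where $v$ is not central. The paper does \emph{not} split on $\theta(v)$: it fixes an auxiliary vertex $w\in V(\Gamma_v)$ outside the link $\Delta$ of $v$, takes the \emph{inner} derivation $\phi=\ad(w)\vert_{\mf e(\Delta)}\colon \mf e(\Delta)\to \mf e(\Gamma_v)$, and asserts that the associated HNN-extension is $\mf e(\Gamma)$, concluding Koszulness by Theorem~\ref{thm:hnn} in one stroke. You instead use the zero derivation when $\theta(v)=0$ --- transparently correct, and slightly simpler since no auxiliary $w$ is needed --- and for $\theta(v)=1$ you pass to the amalgamated product $\mf e(\Gamma_v)\amalg_{\mf e(L)}\bigl(\mf e(L)\times\mf k/\mf k\2\bigr)$ and a Mayer--Vietoris argument.

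Your amalgam decomposition is correct, and the surjectivity of the restriction $H^{\bu}(\mf e(\Gamma_v))\to H^{\bu}(\mf e(L))$ recorded from~(1) does kill the diagonal connecting map, so the chase goes through. But, as you acknowledge, the bigraded Mayer--Vietoris sequence for amalgams of graded restricted Lie algebras in characteristic two is not set up in the paper (only the HNN sequence of Theorem~\ref{thm:hnn} is), so this step carries an external debt that the paper's single-HNN route avoids. Your explicit isolation of the $\theta(v)=1$ case is nonetheless a useful observation: an HNN extension with stable letter $t$ of degree one never imposes $t\2=0$, so some mechanism is genuinely needed there, and your amalgam is one clean way to supply it.
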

	\begin{proof}
		Let $W$ be a proper subset of $V.$ Clearly, the map induced by $v\in W\mapsto v$ and $v\in V\setminus W\mapsto 0$ is a retraction of that induced by the inclusion $W\hookrightarrow V$; hence (1) follows.
		
		We argue by induction on the number of vertices.
		If $\Gamma$ is complete, then $\mf e(\Gamma)$ is the abelian restricted Lie algebra $k^n\times \mf k^m,$ where $n=\abs{\theta^{-1}(1_{\F_2})}.$ In particular, $H^\bullet(\mf e(\Gamma))$ is the symmetric tensor product of a symmetric algebra and an exterior algebra, which is thus Koszul. Moreover, the subalgebra of $e(\Gamma)$ generated by $W$ is the ERA Lie algebra on the induced subgraph of $\Gamma$ spanned by $W.$ 
		
		So, suppose $\Gamma$ is not complete, and let $v\in V\setminus W.$ Denote by $\Gamma_v$ the induced subgraph of $\Gamma$ spanned by the vertices $\neq v.$ By induction, $\mf e(\Gamma_v)$ is a Koszul restricted Lie algebra.
		
		If $v$ is a central vertex of $\Gamma,$ then $\mf e(\Gamma)\simeq \mf e(\Gamma_v)\times \gen{v},$ proving that $\mf e(\Gamma_v)$ naturally embeds into $\mf e(\Gamma).$ Moreover, $H^\bullet(\mf e(\Gamma))$ is the symmetric tensor product of $H^\bullet(\mf e(\Gamma_v))$ and $H^\bullet(\gen{v})$ --- which are generated in degree $1$ by induction---, implying that $\mf e(\Gamma)$ is Koszul by \cite{pp}. 
		
		If $v$ is not a central vertex, let $\Delta$ be the induced subgraph of $\Gamma$ spanned by the link of $v,$ i.e., $V(\Delta)$ consists of the elements $v'\in V\setminus \{v\}$ such that $\{v,v'\}\in E.$ Let $w$ be a vertex of $\Gamma_v$ that does not lie in $\Delta.$ 
		By induction, $\mf e(\Delta)$ naturally embeds into $\mf e(\Gamma_v).$ Moreover, the map $\phi:\mf e(\Delta)\to \mf e(\Gamma_v)$ defined on the canonical generators by $\phi(u)=[w,u]$ is a derivation of degree $1,$ and the associated HNN-extension is isomorphic with $\mf e(\Gamma).$ This proves both that $\mf e(\Gamma_v)$ naturally embeds into $\mf e(\Gamma),$ and that, by induction and Theorem \ref{thm:hnn}, $\mf e(\Gamma)$ is Koszul.
	\end{proof}
	
	This allows us to compute the cohomology ring of ERA Lie algebras. 
	
	\begin{cor}
		Let $\Gamma=(V,E)$ be a finite simple graph, and let $\theta:V\to \F_2\subseteq k$ be a vertex $2$-labelling of $\Gamma.$
		Then, \[H^\bullet(\mf e(\Gamma,\theta),k)\simeq k[V^\ast]/(v^\ast w^\ast, (u^\ast)^2:\ \{v,w\}\notin E,\ v\neq w,\ u\in V\text{ s.t. } \theta(u)=0).\]
	\end{cor}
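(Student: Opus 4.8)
The plan is to exploit Koszulity, exactly as in the proof of Corollary \ref{cor:TRAAGcoho}. By Proposition \ref{prop:ERAembed}(2) the restricted Lie algebra $\mf e=\mf e(\Gamma,\theta)$ is Koszul, so its restricted envelope $A=u(\mf e)$ is a Koszul associative algebra; hence $H^\bu(\mf e,k)=\ext^\bu_A(k,k)$ is canonically isomorphic, as a bigraded algebra, to the quadratic dual $A^!$ (cf. \cite{pp}). Everything then reduces to writing $A$ down as a quadratic algebra and computing the orthogonal complement of its space of relations.

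First I would record the presentation of $A$. Using the defining relations of a restricted envelope recalled in \S\ref{sec:restr}, the Lie relation $[v_1,v_2]$ with $\{v_1,v_2\}\in E$ becomes $v_1\otimes v_2+v_2\otimes v_1$, and the relation $\theta(v)v\2$ with $\theta(v)=1$ becomes $v\otimes v$ (since $v\2$ is identified with $v^2$). Thus $A=T(V)/(R)$, where $R\subseteq V\otimes V$ is spanned by the tensors $v_i\otimes v_j+v_j\otimes v_i$ for $\{v_i,v_j\}\in E$ together with the tensors $v_i\otimes v_i$ for $\theta(v_i)=1$; these are linearly independent (off-diagonal versus diagonal), so $\dim R=|E|+|\theta^{-1}(1)|$.

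Next I would compute $R^\perp\subseteq V^\ast\otimes V^\ast$. Writing a general element as $\sum a_{ij}\,v_i^\ast\otimes v_j^\ast$ and pairing against the generators of $R$, membership in $R^\perp$ amounts to the conditions $a_{ij}=a_{ji}$ whenever $\{v_i,v_j\}\in E$, and $a_{ii}=0$ whenever $\theta(v_i)=1$. On the other hand, the degree-$2$ component of the two-sided ideal of $T(V^\ast)$ that defines $k[V^\ast]/(v^\ast w^\ast,(u^\ast)^2)$ --- i.e., the span of the $v_i^\ast v_j^\ast+v_j^\ast v_i^\ast$ for all $i,j$, of the $v_i^\ast v_j^\ast$ for $\{v_i,v_j\}\notin E$ with $i\neq j$, and of the $(v_i^\ast)^2$ for $\theta(v_i)=0$ --- consists of exactly the tensors satisfying these same two conditions; this is checked by verifying that each listed generator pairs to zero against $R$, plus a dimension count showing both spaces have dimension $|V|^2-|E|-|\theta^{-1}(1)|$. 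Since $A$ and $k[V^\ast]/(v^\ast w^\ast,(u^\ast)^2)$ are thus both generated in degree $1$ with the same space of quadratic relations, we conclude $A^!\simeq k[V^\ast]/(v^\ast w^\ast,(u^\ast)^2:\{v,w\}\notin E,\ v\neq w,\ u\in V\text{ with }\theta(u)=0)$, which is the assertion.

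The genuinely substantive point --- the one thing to get right rather than a routine manipulation --- is that in characteristic $2$ the target is a quotient of the \emph{symmetric} algebra $k[V^\ast]$ rather than of the exterior algebra, so one must track precisely which squares $(v_i^\ast)^2$ appear among the relations: those with $\theta(v_i)=0$ are killed, those with $\theta(v_i)=1$ survive as polynomial generators. This is exactly the bookkeeping that makes the two dimension counts match, and it is the cohomological shadow of the characteristic-$2$ phenomena stressed throughout the paper. As consistency checks one may specialize $\theta\equiv 0$ to recover the exterior Stanley--Reisner description of $H^\bu(\mf a(\Gamma))$ from Corollary \ref{cor:TRAAGcoho}, and $\theta\equiv 1$ to recover $H^\bu(\mf c(\Gamma))$ as a polynomial face ring.
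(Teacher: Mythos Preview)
Your argument is correct and is exactly the approach the paper has in mind: the Corollary is stated in the paper without proof, as an immediate consequence of the Koszulity established in Proposition \ref{prop:ERAembed}(2), via quadratic duality in the same way as Corollary \ref{cor:TRAAGcoho}. Your write-up simply makes explicit the computation of $R^\perp$ and the dimension count that the paper leaves to the reader.
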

	
	In particular, for $\theta\equiv 1,$ we get the cohomology ring of RACG Lie algebras.
	
	\begin{cor}
		Let $\Gamma=(V,E)$ be a simple graph. Then, the cohomology ring of $\mf c(\Gamma)$  is the Stanley-Reisner ring of $\Gamma,$ i.e., \[H^\bullet(\mf c(\Gamma),k)=\mf c(\Gamma)^!\simeq k[\Gamma]:= k[V^\ast]/(v^\ast w^\ast:\ \{v,w\}\notin E, v\neq w).\]
	\end{cor}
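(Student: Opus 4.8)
The plan is to obtain this as an immediate specialization of the preceding corollary (the cohomology of $\mf e(\Gamma,\theta)$), followed by a routine recognition step. First I would recall that, by definition, $\mf c(\Gamma)=\mf e(\Gamma,\theta)$ for the constant $2$-labelling $\theta\equiv 1_{\F_2}$; in particular no vertex $u$ satisfies $\theta(u)=0$. Substituting this into the formula for $H^\bu(\mf e(\Gamma,\theta),k)$ deletes every relation of type $(u^\ast)^2$ and leaves
\[H^\bu(\mf c(\Gamma),k)\ \simeq\ k[V^\ast]/(v^\ast w^\ast:\ \{v,w\}\notin E,\ v\neq w),\]
which is precisely the presentation defining the Stanley--Reisner ring $k[\Gamma]$ (the face ring of the clique complex of $\Gamma$, whose minimal non-faces are exactly the non-edges).

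It remains to identify this ring with the quadratic dual $\mf c(\Gamma)^!$. Since $\mf c(\Gamma)=\mf e(\Gamma,1)$ is Koszul by Proposition~\ref{prop:ERAembed}(2), it is in particular quadratic and its cohomology ring coincides with the quadratic dual $\mf c(\Gamma)^!$, i.e.\ the Koszul dual of the restricted envelope $u(\mf c(\Gamma))$. Using the presentation of $u(\mf g)$ from \S\ref{sec:restr}, one has $u(\mf c(\Gamma))=T(V)/(R)$ with $R\subseteq V\otimes_k V$ spanned by the diagonal elements $v\otimes v$ for all $v\in V$ (since $v\2=0$ in $\mf c(\Gamma)$) together with $v\otimes w+w\otimes v$ for $\{v,w\}\in E$. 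A one-line pairing check then shows that the orthogonal complement $R^\perp$ involves no diagonal term $v_i^\ast\otimes v_i^\ast$ and is spanned by $v_i^\ast\otimes v_j^\ast$ for distinct non-adjacent $v_i,v_j$ and by $v_i^\ast\otimes v_j^\ast+v_j^\ast\otimes v_i^\ast$ for $\{v_i,v_j\}\in E$; hence $\mf c(\Gamma)^!=T(V^\ast)/(R^\perp)$ is commutative, the squares $(v^\ast)^2$ survive, and the only surviving relations are $v^\ast w^\ast=0$ for non-adjacent $v\neq w$ --- i.e.\ $\mf c(\Gamma)^!=k[\Gamma]$, as claimed.

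There is essentially no obstacle here: the substantial input --- Koszulity of ERA Lie algebras and the general cohomology formula for $\mf e(\Gamma,\theta)$ --- has already been supplied, and what is specific to this corollary is only the recognition of the specialized presentation. The one point worth emphasizing is the characteristic-two phenomenon distinguishing RACG from RAAG Lie algebras: for $\mf c(\Gamma)$ the relations $v\2=0$ force every $v\otimes v$ into $R$, which kills the diagonal of $R^\perp$ and therefore lets the dual squares $(v^\ast)^2$ persist, so one obtains the full (polynomial) Stanley--Reisner ring rather than the exterior face ring that arises for $H^\bu(\mf a(\Gamma))$. One can of course bypass $\mf e(\Gamma,\theta)$ altogether and run the displayed dual computation directly for $\mf c(\Gamma)$, and I would add a sentence to that effect.
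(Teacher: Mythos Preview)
Your proposal is correct and matches the paper's approach: the paper states this corollary without proof, simply noting that it is the specialization $\theta\equiv 1$ of the preceding formula for $H^\bu(\mf e(\Gamma,\theta),k)$. Your additional paragraph computing $R^\perp$ directly to verify the identification with the quadratic dual $\mf c(\Gamma)^!$ is more than the paper supplies (there the equality $H^\bu=\mf c(\Gamma)^!$ is implicit in Koszulity), but it is accurate and a welcome elaboration rather than a different route.
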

	
	Since the RACG Lie algebras (of non-empty graphs) contain non-trivial torsion elements, their cohomological dimension is infinite. 
	
	\begin{exam}
		Consider the discrete graph $\Gamma=\bullet\quad\circ,$ and the associated ERA Lie algebra $\mf g=\pres{x,y}{x\2}\simeq \mf k/\mf k\2\amalg\mf k$ over a field $k$ of characteristic two. Its cohomology is the Koszul dual $\mf g^!=k[\xi,\eta]/(\eta^2,\xi\eta),$ whose Hilbert series is \[h_{\mf g^!}(t)=1+2t+t^2+t^3+\dots,\] which
		can be given in terms of Fibonacci numbers, namely, if $f_i$ denotes the $i$th Fibonacci number, with $f_0=f_1=1,$ $f_{i+2}=f_{i+1}+f_i$ ($i\geq 0$), then \[h_{\mf g^!}(-t)=\left(\sum_{i\geq 0} f_{i+1}t^i\right)^{-1}=\frac{1-t-t^2}{1+t}.\]
		Indeed, one can prove by induction that $-f_{n-1} +\sum_{i=1}^n(-1)^{n-i}f_i=0$ for all $n\geq 2,$ whence the following identity in $\Z[\![t]\!]$ holds: 
		\[(1-2t+t^2-t^3+t^4-\dots)(f_1+f_2t+f_3t^2+\dots)=1.\]
	\end{exam}
	
	\section{Subalgebras of RACG Lie algebras}
	The strategy for proving a Droms-type theorem for ERAs is the same as for T-RAAG Lie algebras.
	\begin{exam}\label{ex:nonBKRACG}We proceed similarly to Example \ref{ex:nonBKgraphs}. 
		\begin{enumerate}
			\item 	Let $\Gamma$ be the square graph, i.e., 
            \begin{center}
                $V=\{1,2,3,4\}$ and $E=\{\{1,2\},\{2,3\},\{3,4\},\{1,4\}\}.$
            \end{center} Then the subalgebra $\mf b$ of $\mf e(\Gamma)$ generated by $x_1+x_2,x_3,x_4$ is not quadratic (here $x_i$ is the canonical generator corresponding to the vertex $i$ of $\Gamma$). Indeed, the only degree-$2$ relations of $\mf b$ are $[x_3,x_4]=\theta(3)x_3\2=\theta(4)x_4\2=0,$ and one has $[[x_1+x_2,x_3],x_4]=0.$

			\item Suppose that $\Gamma=P_4$ is the path of length $3,$ i.e.,
            \begin{center}
                $V=\{1,2,3,4\}$ and  $E=\{\{2,3\},\{3,4\},\{4,1\}\}.$ 
            \end{center}
			As in (1), the subalgebra $\mf b$ of $\mf e(\Gamma)$ generated by $x_1+x_2,x_3,x_4$ is not quadratic.
		\end{enumerate}
		
	\end{exam}
	\begin{exam}\label{ex:nonBKERA}
		Consider the labelled graph 
		\[\Gamma=\begin{tikzpicture}
			\node[fill=black!100,circle,scale=0.3,draw,"$x$"] at (0,0) {};
			\node[circle,scale=0.3,draw,"$y$"] at (1,0) {};
			\node[circle,scale=0.3,draw,"$z$"] at (2,0) {};
			\draw[thick] (0,0) -- (0.945,0);
			\draw[thick] (1.055,0) -- (1.945,0);
		\end{tikzpicture},
		\] where $\theta(x)=1,$ and $\theta(y)=\theta(z)=0$:
		\[\mf e(\Gamma,\theta)=\pres{x,y,z}{x\2,[x,y],[y,z]}.\]
		The subalgebra $\mf h$ of $\mf e(\Gamma,\theta)$ generated by $x+y$ and $z$ is not quadratic. Indeed, its quadratic cover is free as the three elements $[x+y,z]=[x,z],$ $(x+y)\2=y\2,$ and $z\2$ are linearly independent. Since $[(x+y)\2,z]=0,$ we deduce that $\mf h$ is not quadratic. So, we get another example where the direct product of a Bloch-Kato Lie algebra -- the ERA Lie algebra on $x,z$ -- with a free abelian one is not Bloch-Kato itself. 
		In particular, for an ERA Lie algebra, the Bloch-Kato property cannot be checked just by looking at the defining unadorned graph.
		
		The same argument shows that the ERA Lie algebra associated to the graph \[\Gamma=\begin{tikzpicture}
			\node[fill=black!100,circle,scale=0.3,draw,"$x$"] at (0,0) {};
			\node[circle,scale=0.3,draw,"$y$"] at (1,0) {};
			\node[fill=black!100,circle,scale=0.3,draw,"$z$"] at (2,0) {};
			\draw[thick] (0,0) -- (0.945,0);
			\draw[thick] (1.055,0) -- (2,0);
		\end{tikzpicture}\] with $\theta(x)=\theta(z)=1,$ and $\theta(y)=0,$ contains a non-quadratic standard subalgebra. (See Conca \cite[Rem. 3.2]{conca}, where the Koszul dual of this Lie algebra is proven not to be universally Koszul).
	\end{exam}

	\begin{thm}
		Let $\Gamma$ be a graph with a $2$-labelling map $\theta:V\to \F_2.$ Then the following are equivalent: 
		\begin{enumerate}
			\item $\Gamma$ is a Droms $2$-labelled graph.
			\item The graph $\Gamma$ is Droms, and, for all connected induced subgraphs $\Lambda$ of $\Gamma,$ either $\theta\vert_{V(\Lambda)}\equiv 0,$ or there exists a central vertex $v$ of $\Lambda$ with $\theta(v)=1$.
			\item The ERA Lie algebra $\mf e(\Gamma)$ is Bloch-Kato.
			\item The ERA Lie algebra $\mf e(\Gamma)$ is weakly Bloch-Kato.
			\item All the standard subalgebras of the ERA Lie algebra $\mf e(\Gamma)$ are ERA Lie algebras.
		\end{enumerate}
	\end{thm}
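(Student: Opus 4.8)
The plan is to prove the cycle of implications $(3)\Rightarrow(4)\Rightarrow(1)\Leftrightarrow(2)\Rightarrow(5)\Rightarrow(3)$. Two links are immediate: $(3)\Rightarrow(4)$ because a Koszul algebra is quadratic, and $(5)\Rightarrow(3)$ because ERA Lie algebras are Koszul by Proposition~\ref{prop:ERAembed}(2), so if all standard subalgebras of $\mf e(\Gamma)$ are ERA Lie algebras then in particular they are all Koszul, i.e.\ $\mf e(\Gamma)$ is Bloch--Kato. For $(1)\Leftrightarrow(2)$: the implication $(1)\Rightarrow(2)$ is exactly Lemma~\ref{lem:labelDroms}, and conversely, if $\bar\Gamma$ is a simple Droms graph containing one of the two labelled paths forbidden in Definition~\ref{def:labelDroms} as an induced subgraph, then that $P_3$ is a connected induced subgraph whose unique central vertex is its midpoint, which carries the label $0$ while some endpoint carries the label $1$, contradicting $(2)$.

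For $(4)\Rightarrow(1)$ I would argue by contraposition, producing a non-quadratic standard subalgebra of $\mf e(\Gamma)$ whenever $\Gamma$ fails to be a Droms $2$-labelled graph. If already the underlying simple graph $\bar\Gamma$ is not a Droms graph, it contains an induced $C_4$ or $P_4$; by Proposition~\ref{prop:ERAembed}(1) the four corresponding vertices span a standard subalgebra $\mf e(\Lambda,\theta\vert_{V(\Lambda)})$, inside which the subalgebra $\gen{x_1+x_2,\,x_3,\,x_4}$ is not quadratic by Example~\ref{ex:nonBKRACG} (the computation there uses only the adjacency pattern among the four vertices and the Jacobi identity, and holds for every $2$-labelling). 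If instead $\bar\Gamma$ is a simple Droms graph but the labelling makes $\Gamma$ non-Droms, then $\Gamma$ contains one of the two forbidden labelled $P_3$'s as an induced subgraph; by Proposition~\ref{prop:ERAembed}(1) the three vertices span an ERA Lie algebra isomorphic to one of those in Example~\ref{ex:nonBKERA}, which contains a non-quadratic standard subalgebra. In either case $\mf e(\Gamma)$ is not weakly BK.

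The substance of the theorem is the implication $(1)\Rightarrow(5)$, which I would prove by induction on $\abs{V}$ using the recursive description of Droms labelled graphs recorded after Lemma~\ref{lem:labelDroms}: the smallest class containing arbitrarily labelled discrete vertices, closed under disjoint unions and under coning, with the proviso that a $0$-labelled tip may be adjoined only when the base has no $1$-labelled vertex. The case $\abs{V}=1$ is trivial. If $\Gamma=\Gamma_1\sqcup\Gamma_2$, then $\mf e(\Gamma)=\mf e(\Gamma_1)\amalg\mf e(\Gamma_2)$, and by the Kurosh subalgebra theorem \cite{sb} a standard subalgebra of $\mf e(\Gamma)$ is a free product of a free restricted Lie algebra generated in degree $1$ (which is the ERA Lie algebra of a discrete graph with trivial labelling) and standard subalgebras of the $\mf e(\Gamma_i)$, which are ERA by induction; since a free product of ERA Lie algebras is the ERA Lie algebra of the disjoint union of the defining labelled graphs, we are done. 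If $\Gamma=\nabla_\epsilon(\Gamma')$ is a cone with a $0$-labelled tip, the proviso forces $\theta\equiv0$ on all of $\Gamma$, so $\mf e(\Gamma)=\mf a(\bar\Gamma)$ is a RAAG Lie algebra on the simple Droms graph $\bar\Gamma$, and we conclude by Lemma~\ref{lem:simpleBK} (RAAG Lie algebras are ERA Lie algebras with trivial labelling). Finally, if $\Gamma=\nabla_1(\Gamma')$ has a $1$-labelled tip $v$, then $v$ is central with $v\2=0$, so $\mf e(\Gamma)\simeq\mf e(\Gamma')\times\mf k/\mf k\2$, and I adapt the proof of Lemma~\ref{lem:directbk}. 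Given a standard subalgebra $\mf m$, let $\pi\colon\mf e(\Gamma)\to\mf e(\Gamma')$ be the projection; by induction $\pi(\mf m)\simeq\mf e(\Lambda,\theta_\Lambda)$ for a Droms labelled graph $\Lambda$ with canonical generators $w_1,\dots,w_r$, which I lift to $\tilde w_j=w_j+\beta_j v\in\mf m$ with $\beta_j\in k$. Since $[v,-]=0$ and $v\2=0$ one has $[\tilde w_i,\tilde w_j]=[w_i,w_j]$ and $\tilde w_j\2=w_j\2$, so $w_j\mapsto\tilde w_j$ respects both families of defining relations of $\mf e(\Lambda)$ and extends to a restricted homomorphism $\sigma\colon\mf e(\Lambda)\to\mf m$ that is split by $\pi\vert_{\mf m}$. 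When the $\tilde w_j$ generate $\mf m$, $\sigma$ is an isomorphism and $\mf m$ is ERA; otherwise $v\in\mf m_1$ and $\mf m=\pi(\mf m)\times\mf k/\mf k\2$ is the ERA Lie algebra of $\Lambda$ coned with a $1$-labelled tip.

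I expect the main obstacle to be the final coning case: one must work with the canonical generators of the intrinsic ERA presentation of $\pi(\mf m)$ furnished by the inductive hypothesis, rather than with an arbitrary degree-$1$ generating set of $\mf m$, since only those transport the defining relations of an ERA Lie algebra correctly; and one must verify the dichotomy that $\pi$ restricts to an injection on $\mf m_1$ unless $v\in\mf m_1$, which is precisely what guarantees that the lifts $\tilde w_j$ generate $\mf m$ in the first alternative.
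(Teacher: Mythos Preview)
Your proof is correct and follows essentially the same route as the paper: the same cycle of implications, the same appeal to Examples~\ref{ex:nonBKRACG} and~\ref{ex:nonBKERA} for the contrapositive $(4)\Rightarrow(1)$, and the same inductive scheme for $(1)/(2)\Rightarrow(5)$ via Kurosh in the disconnected case, Lemma~\ref{lem:simpleBK} in the $\theta\equiv 0$ case, and the Lemma~\ref{lem:directbk} splitting mechanism in the cone-over-a-$1$-labelled-tip case. Your treatment of the splitting $\sigma$ is in fact spelled out more explicitly than the paper's, which simply says ``argue as in Lemma~\ref{lem:directbk}''; the dichotomy you flag in the final paragraph (either $\pi\vert_{\mf m_1}$ is injective, or $v\in\mf m_1$) is precisely what makes that reduction go through.
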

	\begin{proof}
		The implication (1)$\implies$(2) is Lemma \ref{lem:labelDroms}.

		Clearly, (5) implies both (3) and (4) by Proposition \ref{prop:ERAembed}(2).
		If $\Gamma$ is not a Droms labelled graph, then $\mf e(\Gamma)$ contains a non-quadratic standard subalgebra by Examples \ref{ex:nonBKRACG}, \ref{ex:nonBKERA}. This proves that any one of (3)-(5) implies (1). 
		
		It remains to show that (2) implies (5). 
		
		We argue by induction on the number of vertices of $\Gamma.$ The base case being trivial, we suppose that $\Gamma$ has at least two vertices.
		
		Let $\Gamma$ be disconnected with one connected component $\Gamma_1,$ and disjoint decomposition $\Gamma=\Gamma_1\sqcup \Gamma_2.$ By induction, both the $\mf e(\Gamma_i)$'s satisfy (2) and (3), and $\mf e(\Gamma)=\mf e(\Gamma_1)\amalg\mf e(\Gamma_2).$ 
		It thus follows from the Kurosh subalgebra theorem \cite{sb} that if $\mf m$ is a standard restricted subalgebra of $\mf e(\Gamma),$ then there exist graphs $\Lambda_1$ and $\Lambda_2,$ maps $\theta_i:V(\Lambda_i)\to \F_2,$ and a free Lie algebra $\mf f,$ such that $\mf m=\mf f\amalg\mf e(\Lambda_1,\theta_1)\amalg\mf e(\Lambda_2,\theta_2).$ Finally, notice that $\mf f$ is the ERA Lie algebra associated to a discrete graph $\Delta$ with $\theta_\Delta:V(\Delta)\to \{0\}\subset\F_2,$ and hence the disjoint union $\Delta\sqcup\Lambda_1\sqcup\Lambda_2$ is a defining graph of $\mf m,$ where the signature is defined in the obvious way.
		
		Suppose now $\Gamma$ is connected. If $\theta\equiv 0,$ then $\mf e(\Gamma)$ is the RAAG Lie algebra $\mf a(\Gamma),$ which is Bloch-Kato when $\Gamma$ is a Droms graph. 
		On the other hand, if $\theta\not\equiv 0,$ then there exists a central vertex $v$ of $\Gamma$ with $\theta(v)=1,$ and hence $\mf e(\Gamma,\theta)\simeq\mf e(\Gamma_v,\theta')\times \mf k/\mf k\2,$ where $\theta'$ is the restriction of $\theta$ to the induced subgraph $\Gamma_v$ spanned by the vertices $\neq v.$ 
		If $\mf m$ is a standard subalgebra of $\mf e(\Gamma,\theta)\simeq \mf c(\Gamma_v,\theta')\times \mf k/\mf k\2,$ then we argue as in Lemma \ref{lem:directbk}, and prove that there exists a standard subalgebra $\mf h$ of $\mf g:=\mf e(\Gamma_v,\theta')$ such that $\mf m$ is isomorphic to either $\mf h$ or $\mf h\times \mf k/\mf k\2.$ By induction, $\mf h$ is an ERA Lie algebra, proving that the same is true for $\mf m.$
	\end{proof}
	In particular, this applies to RACG Lie algebras.
	
	\begin{cor}\label{prop:dromsRACG}
		Let $\mf c(\Gamma)$ be the RACG Lie algebra over $k$ associated to a simple graph $\Gamma.$ Then, the following are equivalent:
		\begin{enumerate}
			\item 	$\Gamma$ is a Droms graph.
			\item $\mf c(\Gamma)$ is a Bloch-Kato restricted Lie algebra.
			\item All the standard restricted subalgebras of $\mf c(\Gamma)$ are ERA Lie algebras.
			\item All the ordinary standard subalgebras of $\mf c(\Gamma)$ are RAAG Lie algebras \cite{sb}.
		\end{enumerate}
	\end{cor}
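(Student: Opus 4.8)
The plan is to deduce this statement from the preceding theorem on ERA Lie algebras by specializing it to the constant $2$-labelling $\theta\equiv 1\in\F_2$, for which $\mf e(\Gamma,\theta)=\mf c(\Gamma)$, and to handle item (4) separately through the ordinary (group-theoretic) Droms theorem.

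First I would record two combinatorial reductions for $\theta\equiv 1$. The pair $(\Gamma,\theta)$ is a Droms $2$-labelled graph in the sense of Definition \ref{def:labelDroms} if and only if $\Gamma$ is a Droms graph: each of the two forbidden labelled paths in that definition has its middle vertex labelled $0$, so none of them can be induced in a graph all of whose vertices carry the label $1$. Similarly, condition (2) of the preceding theorem collapses to ``$\Gamma$ is Droms'' when $\theta\equiv 1$: if $\Gamma$ is a Droms graph, then every connected induced subgraph $\Lambda$ is again a connected Droms graph, hence a cone, hence has a central vertex $v$ (the argument already used in the proof of Lemma \ref{lem:labelDroms}); since $\theta\equiv 1$ we have $\theta(v)=1$ and $\theta|_{V(\Lambda)}\not\equiv 0$, so the required alternative holds, while the converse is immediate because (2) of the theorem already contains the clause ``$\Gamma$ is Droms''. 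Granting these, the equivalences $(1)\Leftrightarrow(2)\Leftrightarrow(3)\Leftrightarrow(5)$ of the preceding theorem specialize, for $\theta\equiv 1$, exactly to the equivalence of items (1)--(3) of the Corollary: item (1) is the specialization of conditions (1)--(2) of that theorem, item (2) (that $\mf c(\Gamma)$ is BK) is its condition (3) since $\mf c(\Gamma)=\mf e(\Gamma)$, and item (3) (that all standard restricted subalgebras are ERA) is its condition (5).

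It remains to incorporate item (4), which is a statement about the underlying \emph{ordinary} Lie algebra and therefore is not a formal consequence of the restricted theorem. Here I would use that the ordinary Lie subalgebra of $\mf c(\Gamma)$ generated by its canonical generators is the ordinary RAAG Lie algebra $\li_\Gamma$, so that an ordinary standard subalgebra of $\mf c(\Gamma)$ is the same thing as an ordinary standard subalgebra of $\li_\Gamma$; then I would invoke the ordinary version of Droms' theorem, namely \cite[Ex.~4.4]{sb} (equivalently \cite{droms}), which asserts that all standard subalgebras of $\li_\Gamma$ are RAAG Lie algebras precisely when $\Gamma$ is a Droms graph. This gives $(4)\Leftrightarrow(1)$ and closes the cycle of equivalences. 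I do not expect any real obstacle; the only steps that need care are the two reductions above --- in particular the claim that a connected Droms graph is a cone and hence has a central vertex --- and the verification that ``ordinary standard subalgebra of $\mf c(\Gamma)$'' coincides with ``ordinary standard subalgebra of $\li_\Gamma$'', so that the cited ordinary result can be applied verbatim.
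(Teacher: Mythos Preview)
Your proposal is correct and matches the paper's intended argument: the paper offers no explicit proof, merely stating ``In particular, this applies to RACG Lie algebras'' before the corollary, so it is meant as the specialization of the preceding theorem to $\theta\equiv 1$, with item (4) justified by the cited reference \cite{sb}. Your reductions---that for $\theta\equiv 1$ the Droms $2$-labelled condition and condition (2) of the theorem both collapse to ``$\Gamma$ is Droms'', and that ordinary standard subalgebras of $\mf c(\Gamma)$ coincide with those of $\li_\Gamma$---are exactly the details one needs to spell out.
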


	\begin{cor}
		Let $V$ be a $k$-vector space with basis $(v_i)_{1\leq i\leq n},$ and let $\lambda:T(V)\to \Lambda(V)$ and $\sigma:T(V)\to S(V)$ be the natural projections. Let $E\subseteq\set{v_i\otimes v_j}{1\leq i,j\leq n}.$ Then,
		
		\begin{center}
			$\Lambda(V)/\lambda(E)$ is universally Koszul iff $S(V)/\sigma(E)$ is universally Koszul.
		\end{center}
		\begin{proof}
			Let $\Gamma=(V,E'),$ where $E'=\set{\{v_i,v_j\}}{v_i\otimes v_j\in E }.$ Then, the RACG $\mf c(\Gamma)$ is a Bloch-Kato restricted Lie algebra if and only if the RAAG $\li_\Gamma$ is a Bloch-Kato (ordinary) Lie algebra. In other words, the restricted cohomology of $\mf c(\Gamma)$ is universally Koszul precisely when the Chevalley-Eilenberg cohomology of $\li_\Gamma$ is universally Koszul. 
		\end{proof}
	\end{cor}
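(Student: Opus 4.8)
The plan is to translate the statement into a question about Bloch--Kato restricted Lie algebras and then quote the results already proved. First I would reduce to the case in which $E$ consists of off-diagonal tensors $v_i\otimes v_j$ with $i\neq j$: since $\chr k=2$ the exterior algebra $\Lambda(V)$ is commutative and equals $k[v_1,\dots,v_n]/(v_i^2:1\le i\le n)$, so a diagonal tensor $v_i\otimes v_i$ maps to $0$ under $\lambda$ and in any case must be excluded for the equivalence to hold (see the last paragraph). With $E$ off-diagonal, let $\Gamma=(V,E')$ be the simple graph on the vertices $v_1,\dots,v_n$ whose edges are the pairs $\{v_i,v_j\}$ for which \emph{neither} $v_i\otimes v_j$ \emph{nor} $v_j\otimes v_i$ lies in $E$. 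Because $\lambda$ and $\sigma$ send $v_i\otimes v_j$ and $v_j\otimes v_i$ to the same degree-$2$ monomial $v_iv_j$, the algebra $\Lambda(V)/\lambda(E)$ is the quotient of $\Lambda(V)$ by the monomials indexed by the non-edges of $\Gamma$; that is, it is the Chevalley--Eilenberg cohomology ring $H^\bu_o(\li_\Gamma)$ of the ordinary RAAG Lie algebra of $\Gamma$, equivalently $H^\bu(\mf a(\Gamma))$ via the identification $U(\li_\Gamma)\cong u(\mf a(\Gamma))$ recorded earlier. Dually, $S(V)/\sigma(E)$ is the Stanley--Reisner ring $k[\Gamma]$, which is $H^\bu(\mf c(\Gamma))$ by the computation of the cohomology of RACG Lie algebras.

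Given these identifications, the assertion becomes: $H^\bu(\mf a(\Gamma))$ is universally Koszul if and only if $H^\bu(\mf c(\Gamma))$ is. Since $\mf a(\Gamma)$ and $\mf c(\Gamma)$ are Koszul restricted Lie algebras (Corollary~\ref{cor:TRAAGkosz} and Proposition~\ref{prop:ERAembed}), the restricted version of \cite[Thm.~A]{sb} recalled in \S\ref{sec:restr} says that each of these cohomology rings is universally Koszul exactly when the corresponding Lie algebra is Bloch--Kato. So it suffices to show that $\mf a(\Gamma)$ is Bloch--Kato if and only if $\mf c(\Gamma)$ is. For this I would invoke Lemma~\ref{lem:simpleBK}, which (through its proof) equates the Bloch--Kato property of $\mf a(\Gamma)$ with $\Gamma$ being a Droms graph, together with Corollary~\ref{prop:dromsRACG}, which equates the Bloch--Kato property of $\mf c(\Gamma)$ with exactly the same condition. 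Chaining the two equivalences through ``$\Gamma$ is a Droms graph'' completes the proof.

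There is no real analytic obstacle; the only thing demanding care is the combinatorial dictionary of the first step. One must take $\Gamma$ to be the \emph{complement} of the relation set $E$, so that the monomials killed in both $\Lambda(V)/\lambda(E)$ and $S(V)/\sigma(E)$ correspond to the non-edges of $\Gamma$; and one must keep in mind that, in characteristic $2$, $\Lambda(V)$ kills every square whereas the Stanley--Reisner ring $H^\bu(\mf c(\Gamma))$ keeps the classes $(v_i^*)^2$ nonzero. This asymmetry is precisely why diagonal tensors must be excluded from $E$: adjoining some $v_i\otimes v_i$ to $E$ changes $S(V)/\sigma(E)$ (it forces $v_i^2=0$, turning $k[\Gamma]$ into the cohomology of an ERA Lie algebra with a non-trivial $2$-labelling) but leaves $\Lambda(V)/\lambda(E)$ untouched, and already for $\Gamma$ a path on three vertices this would break the equivalence. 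Once the dictionary is fixed, Lemma~\ref{lem:simpleBK} and Corollary~\ref{prop:dromsRACG} do the rest.
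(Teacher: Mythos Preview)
Your proposal is correct and follows essentially the same route as the paper: identify the two quotient algebras with the cohomology rings of $\mf a(\Gamma)$ and $\mf c(\Gamma)$ for a suitable simple graph, invoke the equivalence between the Bloch--Kato property and universal Koszulity of the cohomology, and then chain Lemma~\ref{lem:simpleBK} and Corollary~\ref{prop:dromsRACG} through the common condition ``$\Gamma$ is Droms.'' The paper's proof does exactly this in two sentences, while you spell out the combinatorial dictionary more carefully---in particular you take $\Gamma$ to be the \emph{complement} of the relation set (so that the ideal $\lambda(E)$, resp.\ $\sigma(E)$, matches the non-edge monomials appearing in the cohomology presentations), whereas the paper writes $E'=\set{\{v_i,v_j\}}{v_i\otimes v_j\in E}$; your convention is the one that actually yields the desired identifications. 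Your observation on diagonal tensors is also a genuine refinement: as stated, the corollary allows $v_i\otimes v_i\in E$, and your $P_3$ counterexample shows the equivalence then fails, so the off-diagonal hypothesis you add is needed.
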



\begin{bibdiv}
\begin{biblist}

\bib{sb}{article}{
      author={Blumer, S.},
       title={Kurosh theorem for certain {Koszul Lie} algebras},
        date={2023},
     journal={Journal of Algebra},
      volume={614},
       pages={780\ndash 805},
}

\bib{sb_kosz}{article}{
      author={Blumer,{S.}},
       title={Koszul {Lie} algebras and their subalgebras},
        journal = {Journal of Algebra},
        volume = {699},
        pages = {1-38},
        date = {2026},
        issn = {0021-8693},    
}

\bib{sb_cq}{article}{
      author={Blumer, S.},
      author={Cassella, A.},
      author={Quadrelli, C.},
       title={{Groups of $p$-absolute Galois type that are not absolute Galois
  groups}},
        date={2023},
     journal={Journal of Pure and Applied Algebra},
      volume={227},
      number={4},
       pages={107262},
}

\bib{sb_droms}{article}{
      author={Blumer, S.},
      author={Foniqi, I.},
      author={Quadrelli, C.},
       title={Droms theorems for twisted right-angled {Artin} groups},
        date={2025},
     journal={arXiv preprint arXiv:2504.21196},
}

\bib{sb_qw}{article}{
      author={Blumer, S.},
      author={Quadrelli, C.},
      author={Weigel, Th.},
       title={{Oriented Right-Angled Artin Pro-$\ell$ Groups and Maximal
  Pro-$\ell$ Galois Groups}},
        date={2024},
     journal={International Mathematics Research Notices},
      volume={2024},
      number={8},
       pages={6790\ndash 6819},
}

\bib{conca}{article}{
      author={Conca, A.},
       title={{Universally Koszul algebras}},
        date={2000},
     journal={Mathematische Annalen},
      volume={317},
       pages={329\ndash 346},
}

\bib{demazure}{book}{
      author={Demazure, M.},
      author={Gabriel, P.},
       title={{Groupes alg{\'e}briques. Tome I.}},
   publisher={North-Holland},
        date={1970},
}

\bib{analyticprop}{book}{
      author={Dixon, J.D.},
      author={Du~Sautoy, M.P.F.},
      author={Mann, A.},
      author={Segal, D.},
       title={Analytic pro-p groups},
   publisher={Cambridge University Press},
        date={2003},
      number={61},
}

\bib{droms_iso}{article}{
      author={Droms, C.},
       title={Isomorphisms of graph groups},
        date={1987},
     journal={Proceedings of the American Mathematical Society},
      volume={100},
      number={3},
       pages={407\ndash 408},
}

\bib{droms}{article}{
      author={Droms, {C.}},
       title={Subgroups of graph groups},
        date={1987},
     journal={Journal of Algebra},
      volume={110},
      number={2},
       pages={519\ndash 522},
}

\bib{quentin}{article}{
      author={Q. Ehret},
      author={A. Makhlouf},
       title={Cohomology and deformations of restricted {L}ie algebras and their
  morphisms in positive characteristic},
        date={2025},
        ISSN={1869-6090},
     journal={Advances in Pure and Applied Mathematics},
      volume={16},
      number={3},
      pages={26\ndash 73},
}

\bib{evansfuchs}{article}{
      author={T.J. Evans},
      author={D. Fuchs},
       title={A complex for the cohomology of restricted {L}ie algebras},
        date={2008},
     journal={Journal of Fixed Point Theory and Applications},
      volume={3},
      number={1},
       pages={159\ndash 179},
}

\bib{frob}{article}{
      author={Fr{\"o}berg, R.},
       title={Determination of a class of {Poincar{\'e}} series},
        date={1975},
     journal={Mathematica Scandinavica},
      volume={37},
      number={1},
       pages={29\ndash 39},
}

\bib{hochschild_restr}{article}{
      author={G. Hochschild},
       title={{Cohomology of restricted Lie algebras}},
        date={1954},
     journal={American Journal of Mathematics},
      volume={76},
      number={3},
       pages={555\ndash 580},
}

\bib{jacob}{book}{
      author={Jacobson, N.},
       title={Lie algebras},
   publisher={Courier Corporation},
        date={1979},
      number={10},
}

\bib{kimroush}{article}{
      author={Kim, K.H.},
      author={Roush, F.W.},
       title={Homology of certain algebras defined by graphs},
        date={1980},
     journal={Journal of Pure and Applied Algebra},
      volume={17},
      number={2},
       pages={179\ndash 186},
}

\bib{cmp}{article}{
      author={Kochloukova, D.H.},
      author={Mart{\'\i}nez-P{\'e}rez, C.},
       title={{Bass-Serre theory for Lie algebras: A homological approach}},
        date={2021},
     journal={Journal of Algebra},
      volume={585},
       pages={143\ndash 175},
}

\bib{hnnLS}{article}{
      author={Lichtman, A.},
      author={Shirvani, M.},
       title={{HNN-extensions of Lie algebras}},
        date={1997},
     journal={Proceedings of the American Mathematical Society},
      volume={125},
      number={12},
       pages={3501\ndash 3508},
}

\bib{milnorMoore}{article}{
      author={Milnor, J.W.},
      author={Moore, J.C.},
       title={{On the structure of Hopf algebras}},
        date={1965},
     journal={Annals of Mathematics},
       pages={211\ndash 264},
}

\bib{MPPT}{article}{
      author={J. Min{\'a}{\v{c}}},
      author={M. Palaisti },
      author={Pasini, F.W.},
      author={N.D. T{\^a}n, },
       title={{Enhanced Koszul properties in Galois cohomology}},
        date={2020},
     journal={Research in the Mathematical Sciences},
      volume={7},
      number={2},
       pages={10},
}

\bib{restrwitt}{article}{
      author={Petrogradsky, V.~M.},
       title={Witt's formula for restricted {Lie} algebras},
        date={2003},
     journal={Advances in Applied Mathematics},
      volume={30},
      number={1-2},
       pages={219\ndash 227},
}

\bib{pp}{book}{
      author={Polishchuk, A.},
      author={Positselskii, L.E.},
       title={Quadratic algebras},
   publisher={American Mathematical Society},
        date={2005},
      volume={37},
}

\bib{salvetti}{article}{
      author={Salvetti, M.},
       title={Topology of the complement of real hyperplanes in
  $\mathbb{C}^{N}$},
        date={1987},
     journal={Inventiones mathematicae},
      volume={88},
      number={3},
       pages={603\ndash 618},
}

\bib{voe}{article}{
      author={Voevodsky, V.},
       title={On motivic cohomology with {$\Z/\ell$}-coefficients},
        date={2011},
     journal={Annals of mathematics},
       pages={401\ndash 438},
}

\bib{weibelBK}{article}{
      author={Weibel, C.A.},
       title={The norm residue isomorphism theorem},
        date={2009},
     journal={Journal of Topology},
      volume={2},
      number={2},
       pages={346\ndash 372},
}

\bib{weig}{article}{
      author={Weigel, Th.},
       title={Graded {Lie} algebras of type {FP}},
        date={2015},
     journal={Israel Journal of Mathematics},
      volume={205},
       pages={185\ndash 209},
}

\end{biblist}
\end{bibdiv}


@article{sb,
    Author={Blumer, S.},
    title={Kurosh theorem for certain {Koszul Lie} algebras},
	journal={Journal of Algebra},
	volume={614},
	pages={780--805},
	year={2023},
	publisher={Elsevier}
}

@article{sb_qw,
	title={{Oriented Right-Angled Artin Pro-$\ell$ Groups and Maximal Pro-$\ell$ Galois Groups}},
	author={Blumer, S. and Quadrelli, C. and Weigel, Th.},
	journal={International Mathematics Research Notices},
	volume={2024},
	number={8},
	pages={6790--6819},
	year={2024},
	publisher={Oxford University Press}
}

@article{sb_cq,
	title={{Groups of $p$-absolute Galois type that are not absolute Galois groups}},
	author={Blumer, S. and Cassella, A. and Quadrelli, C.},
	journal={Journal of Pure and Applied Algebra},
	volume={227},
	number={4},
	pages={107262},
	year={2023},
	publisher={Elsevier}
}

@article{sb_droms,
	title={Droms Theorems for twisted right-angled {Artin} groups},
	author={Blumer, S. and Foniqi, I. and Quadrelli, C.},
	journal={arXiv preprint arXiv:2504.21196},
	year={2025}
}

@article{conca,
	title={{Universally Koszul algebras}},
	author={Conca, A.},
	journal={Mathematische Annalen},
	volume={317},
	pages={329--346},
	year={2000},
	publisher={Springer-Verlag}
}

@article{frob,
	title={Determination of a class of {Poincar{\'e}} series},
	author={Fr{\"o}berg, R.},
	journal={Mathematica Scandinavica},
	volume={37},
	number={1},
	pages={29--39},
	year={1975},
	publisher={JSTOR}
}

@book{jacob,
	title={Lie algebras},
	author={Jacobson, N.},
	number={10},
	year={1979},
	publisher={Courier Corporation}
}

@article{cmp,
	title={{Bass-Serre theory for Lie algebras: A homological approach}},
	author={Kochloukova, D.H. and Mart{\'\i}nez-P{\'e}rez, C.},
	journal={Journal of Algebra},
	volume={585},
	pages={143--175},
	year={2021},
	publisher={Elsevier}
}

@article{hnnLS,
	title={{HNN-extensions of Lie algebras}},
	author={Lichtman, A. and Shirvani, M.},
	journal={Proceedings of the American Mathematical Society},
	volume={125},
	number={12},
	pages={3501--3508},
	year={1997}
}

@article{milnorMoore,
	title={{On the structure of Hopf algebras}},
	author={Milnor, J.W. and Moore, J.C.},
	journal={Annals of Mathematics},
	pages={211--264},
	year={1965},
	publisher={JSTOR}
}

@article{MPPT,
	title={{Enhanced Koszul properties in Galois cohomology}},
	author={Min{\'a}{\v{c}}, J. and Palaisti, M. and Pasini, F.W. and T{\^a}n, N.D.},
	journal={Research in the Mathematical Sciences},
	volume={7},
	pages={1--34},
	year={2020},
	publisher={Springer}
}

@book{pp,
	title = {Quadratic algebras},
	author = {Polishchuk, A. and Positselskii, L.E.},
	volume = {37},
	year = {2005},
	publisher = {American Mathematical Society}
}

@article{voe,
	title = {On motivic cohomology with {$\Z/\ell$}-coefficients},
	author = {Voevodsky, V.},
	journal = {Annals of mathematics},
	pages = {401--438},
	year = {2011},
	publisher = {JSTOR}
}

@article{weibel,
	title = {The norm residue isomorphism theorem},
	author = {Weibel, C.A.},
	journal = {Journal of Topology},
	volume = {2},
	number = {2},
	pages = {346--372},
	year = {2009},
	publisher = {Wiley Online Library}
}

@article{weibelBK,
	title = {The norm residue isomorphism theorem},
	author = {Weibel, C.A.},
	journal = {Journal of Topology},
	volume = {2},
	number = {2},
	pages = {346--372},
	year = {2009}
}

@article{weig,
	title = {Graded {Lie} algebras of type {FP}},
	author = {Weigel, Th.},
	journal = {Israel Journal of Mathematics},
	volume = {205},
	pages = {185--209},
	year = {2015},
	publisher = {Springer}
}

@book{analyticprop,
	title={Analytic pro-p groups},
	author={Dixon, J.D. and Du Sautoy, M.P.F. and Mann, A. and Segal, D.},
	number={61},
	year={2003},
	publisher={Cambridge University Press}
}

@article{droms,
	title={Subgroups of graph groups},
	author={Droms, C.},
	journal={Journal of Algebra},
	volume={110},
	number={2},
	pages={519--522},
	year={1987}
}

@article{droms_iso,
	title={Isomorphisms of graph groups},
	author={Droms, C.},
	journal={Proceedings of the American Mathematical Society},
	volume={100},
	number={3},
	pages={407--408},
	year={1987}
}

@article{kimroush,
	title={Homology of certain algebras defined by graphs},
	author={Kim, K.H. and Roush, F.W.},
	journal={Journal of Pure and Applied Algebra},
	volume={17},
	number={2},
	pages={179--186},
	year={1980},
	publisher={Elsevier}
}

@article{restrwitt,
	title={Witt's formula for restricted {Lie} algebras},
	author={Petrogradsky, V. M.},
	journal={Advances in Applied Mathematics},
	volume={30},
	number={1-2},
	pages={219--227},
	year={2003},
	publisher={Elsevier}
}

@article{salvetti,
	title={Topology of the complement of real hyperplanes in $\mathbb{C}^{N}$},
	author={Salvetti, M.},
	journal={Inventiones mathematicae},
	volume={88},
	number={3},
	pages={603--618},
	year={1987}
}

@article{hochschild_restr,
	title={{Cohomology of restricted Lie algebras}},
	author={Hochschild, Gerhardt},
	journal={American Journal of Mathematics},
	volume={76},
	number={3},
	pages={555--580},
	year={1954},
	publisher={JSTOR}
}

@article{evansfuchs,
	title={A complex for the cohomology of restricted Lie algebras},
	author={Evans, Tyler J and Fuchs, Dmitry},
	journal={Journal of Fixed Point Theory and Applications},
	volume={3},
	number={1},
	pages={159--179},
	year={2008},
	publisher={Springer}
}

@article{quentin, 
	title={Cohomology and deformations of restricted Lie algebras and their morphisms in positive characteristic}, 
	author={Quentin Ehret and Abdenacer Makhlouf}, 
	journal={Advances in Pure and Applied Mathematics}, 
	volume={16}, 
	number={Issue 3 (June 2025)}, 
	year={2025}, 
	doi={10.21494/ISTE.OP.2025.1316}, 
	issn={1869-6090}, 
	}

@book{demazure,
	title={{Groupes alg{\'e}briques. Tome I.}},
	author={Demazure, M. and Gabriel, P.},
	year={1970},
	publisher={North-Holland}
}
\end{document}